\documentclass{amsart} \usepackage[T1]{fontenc} 
\usepackage{graphicx}

\usepackage{amsmath,amssymb}  
	\usepackage[all]{xy}
	\usepackage{float}
	
	\newcommand{\eps}{\varepsilon}

	\newcommand{\Real}{\mathbb{R}}
	\newcommand{\Complex}{\mathbb{C}}
	\newcommand{\Integer}{\mathbb{Z}}
	\newcommand{\DBS}{\name{\mathcal{DBS}}}
	\newcommand{\Pass}{\name{\mathcal P}}
	\newcommand{\dM}{\partial M}
	\newcommand{\CJ}{\mathcal{CJ}^{\sim}}
	\newcommand{\CJcl}{\mathcal{CJ}_0}
	\newcommand{\CJfrak}{\mathfrak{CJ^{\sim}}}
	\newcommand{\DT}[1]{#1 \dots #1}
	\newcommand{\bydef}{\stackrel{\mbox{\tiny def}}{=}}
	\def \lmod#1\rmod {\left|\smash{#1}\right|\vphantom{#1}}
	\newcommand{\CSum}[1]{\operatorname{\mathcal C}\nolimits_{#1}^{\sim}}
	\newcommand{\pder}[2]{\frac{\partial #1}{\partial #2}}
	
	\newcommand{\pdertwo}[3]{\frac{\partial^2 #1}{\partial #2 \partial #3}}
	\newcommand{\name}[1]{\operatorname{\mathrm{#1}}}
	\newcommand{\one}{\name{\mathbf{1}}}
	\newcommand{\Upp}{\overline{\mathbb H}}
	
	\newcommand{\wM}{\widehat{M}}
	\newcommand{\wN}{\widehat{N}}
	\newcommand{\wf}{\widehat{f}}
	\newcommand{\T}{\mathcal T}
	\newcommand{\Tw}{\mathcal R}
	
	\theoremstyle{plain}
	\newtheorem{theorem}{Theorem}
	\newtheorem{lemma}[theorem]{Lemma}
	\newtheorem{proposition}[theorem]{Proposition}
	\newtheorem{corollary}[theorem]{Corollary}
	\theoremstyle{definition}
	\newtheorem{definition}[theorem]{Definition}
	\newtheorem{example}[theorem]{Example}
	
	\theoremstyle{remark}
	\def \remarkName {Remark}
	\newtheorem{remark}[theorem]{\remarkName}
	\newtheorem{Remark}{\remarkName}

	\numberwithin{theorem}{section}
	\numberwithin{equation}{section}

	\title{Ribbon decomposition and twisted Hurwitz numbers}
	\author{Yurii Burman}
	\address{Higher School of Economics, Moscow, Russia, and
		Independent University of Moscow.}
	\email{burman@mccme.ru}
	\thanks{The research was partially funded by the HSE University Basic Research
		Program and by the International Laboratory of Cluster Geometry NRU
		HSE (RF Government grant, ag.~No.~075-15-2021-608 dated
		08.06.2021). The research of the first-named author was also supported
		by the Simons Foundation IUM grant 2021.}
	\author{Rapha\"el Fesler}
	\address{Higher School of Economics, Moscow, Russia}
	\email{raphael.fesler@gmail.com}
	\dedicatory{In memoriam S.M.Natanzon.}
	\subjclass[2010]{Primary 57N05, secondary 05C30}
	\keywords{Surface with boundary, Hurwitz number, Jack polynomial}
	
	\begin{document}
		
		\begin{abstract}
			Ribbon decomposition is a way to obtain a surface with boundary
			(compact, not necessarily oriented) from a collection of disks by
			joining them with narrow ribbons attached to segments of the
			boundary. Counting ribbon decompositions gives rise to a ``twisted''
			version of the classical Hurwitz numbers (studied earlier in
			\cite{CD} in a different context) and of the cut-and-join
			equation. We also provide an algebraic description of these numbers
			and an explicit formula for them in terms of zonal polynomials.
		\end{abstract}
		
		\maketitle
		
		\section*{Introduction}
		
		A classical surgery in dimension $2$ studies connected sums of
		spheres, that is, ways to obtain a compact surface from a collection
		of spheres by gluing cylinders to them. In this paper we apply similar
		technique to surfaces with boundary: they are obtained from a
		collection of disks by gluing rectangles (``ribbons'') to their
		boundary. Like with the classical connected sum, to glue a ribbon one
		is to choose the orientation of the boundary at both points of gluing,
		so the ribbon glued may look twisted or not.
		
		Representation of a surface with boundary as a union of disks with the
		ribbons attached will be called its ribbon decomposition. See
		Fig.~\ref{Fg:CoverMob} for examples: the upper picture is a ribbon
		decomposition of an annulus, the lower one, of a Moebius band.
		
		Diagonals of ribbons form a graph embedded into the surface
		(a.k.a.\ fat graph, ribbon graph, combinatorial map, etc.), with all
		its vertices on the boundary. The edges adjacent to a given vertex are
		thus linearly ordered left to right (remember, an orientation of the
		boundary near every vertex is chosen); this ordering defines the
		embedding of the graph up to homotopy.
		
		Fix a positive integer $m$ and a partition $(\lambda_1 \DT\ge
		\lambda_s)$ of the number $n \bydef \lmod\lambda\rmod \bydef \lambda_1
		\DT+ \lambda_s$ into $s$ parts. The main object of study in this
		paper, the {\em twisted Hurwitz numbers} $h_{m,\lambda}^{\sim}$, have
		several definitions or models, as we call them. The first one, a
		topological model, uses ribbon decompositions. Denote by $\mathfrak
		S_{m,\lambda}$ the set of decompositions into $m$ ribbons of surfaces
		having boundary of $s$ components containing $\lambda_1 \DT,
		\lambda_s$ vertices (endpoints of ribbon diagonals). Then the twisted
		Hurwitz number is defined as
		\begin{equation}\label{Eq:DefHurw}
			h_{m,\lambda}^{\sim} \bydef \frac{1}{n!} \#\mathfrak S_{m,\lambda}
		\end{equation}
		
		Another model for $h_{m,\lambda}^{\sim}$ is algebraic. Consider a
		fixed-point-free involution
		\begin{equation}\label{Eq:DefTau}
			\tau = (1,n+1)(2,n+2) \dots (n,2n)
		\end{equation}
		in the symmetric group $S_{2n}$. Its centralizer is isomorphic to the
		reflection group of type $B_n$. Let $\sigma_1 \DT, \sigma_m \in
		S_{2n}$ be transpositions. A simple analysis (see Section
		\ref{Sec:Algebra} below) shows that the permutation
		\begin{equation}\label{Eq:HDecomp}
			u \bydef \sigma_1 \dots \sigma_m \tau \sigma_m \dots \sigma_1 \tau
			\in S_{2n}
		\end{equation}
		is decomposed into independent cycles as $u = c_1 c_1' \dots c_s c_s'$
		where $c_i' = \tau c_i \tau$ for every $i = 1 \DT, s$. Denote by
		$\mathfrak H_{m,\lambda}$ the set of sequences $(\sigma_1 \DT,
		\sigma_m)$ of $m$ transpositions such that the cycles $c_1 \DT, c_s$
		of the permutation $u$ of \eqref{Eq:HDecomp} have lengths $\lambda_1
		\DT, \lambda_s$. We prove (Theorem \ref{Th:RibbonToPermut}) that
		\begin{equation}\label{Eq:HurwAlgebr}
			h_{m,\lambda}^{\sim} = \frac{1}{n!} \#\mathfrak H_{m,\lambda}.
		\end{equation}
		
		The third model for $h_{m,\lambda}^{\sim}$ is algebro-geometric and is
		due to G.\,Chapuy and M.\,Dołęga \cite{CD}, who generalized the
		classical notion of a branched covering to the non-orientable
		case. Let $N$ denote a closed surface (compact $2$-manifold without
		boundary, not necessarily orientable), and $p: \wN \to N$, its
		orientation cover. Denote by $\Upp \bydef \Complex P^1/(z \sim
		\overline{z}) = \mathbb H \cup \{\infty\}$ where $\mathbb H \subset
		\Complex$ is the upper half-plane; its closure $\Upp$ is homeomorphic
		to a disk. Let $\pi: \Complex P^1 \to \Upp$ be the quotient map. A
		continuous map $f: N \to \Upp$ is called \cite{CD} a twisted branched
		covering if there exists a branched covering $\wf: \wN \to \Complex
		P^1$ (in the classical sense, a holomorphic map) such that $\pi \circ
		\wf = f \circ p$, and all the critical values of $\wf$ are real. These
		requirements imply in particular that the ramification profile of any
		critical value $c \in \Real P^1 \subset \Complex P^1$ of $\wf$ has
		every part repeated twice: $(\lambda_1, \lambda_1 \DT, \lambda_s,
		\lambda_s)$, and $\deg \widehat f = 2n$ is even. In this case we say
		that the ramification profile of the critical value $\pi(c) \in
		\partial\Upp$ of the map $f: N \to \Upp$ is $\lambda = (\lambda_1 \DT,
		\lambda_s)$.
		
		Twisted branched coverings are split into equivalence classes via
		right-left equivalence. Denote by $\mathfrak D_{m,\lambda}$ the set of
		equivalence classes of twisted branched coverings having $m$ critical
		values with the ramification profiles $2^1 1^{n-2}$ and one critical
		value $\infty$ with the ramification profile $\lambda$. Then
		\begin{equation}\label{Eq:HurwCD}
			h_{m,\lambda}^{\sim} = \frac{1}{n!} \#\mathfrak D_{m,\lambda}.
		\end{equation}
		
		Note that we prove equations \eqref{Eq:HurwAlgebr} and
		\eqref{Eq:HurwCD} differently. To prove \eqref{Eq:HurwAlgebr} we
		establish a direct one-to-one correspondence $\Xi$ between the sets
		$\mathfrak S_{m,\lambda}$ and $\mathfrak H_{m,\lambda}$. To prove
		\eqref{Eq:HurwCD} we show (Theorems \ref{Th:HurwCJ} and
		\ref{Th:CJtwisted}) that the generating function of the twisted
		Hurwitz numbers satisfies a PDE of parabolic type called twisted
		cut-and-join equation --- just like standard Hurwitz numbers, whose
		generating function satisfies the ``classical'' cut-and-join
		\cite{LandoKazarian}. Cardinalities of the sets $\mathfrak
		D_{m,\lambda}$ are shown in \cite{CD} to satisfy the same equation
		with the same initial data, so \eqref{Eq:HurwCD} follows. Finding a
		direct one-to-one correspondence between the sets $\mathfrak
		D_{m,\lambda}$ and $\mathfrak S_{m,\lambda}$ (or $\mathfrak
		H_{m,\lambda}$) is a challenging topic of future research.
		
		The paper contains three main sections in accordance with the three
		models described. In the first, ``topological'' section we study ribbon
		decompositions of surfaces with boundary (rigged with marked points)
		and the graphs (with numbered vertices and edges) formed by the
		diagonals of ribbons. The graphs appear to be $1$-skeleta of the
		surface, and the surface can be retracted to them (Theorem
		\ref{Th:Cell}); also, the graphs behave nicely under the orientation
		cover of the surface (Theorems \ref{Th:PropOrient} and
		\ref{Th:OrientViaGraph}).
		
		Graph embeddings into oriented surfaces were studied earlier in a
		number of works (see \cite{LandoZvonkin} for the general theory
		without boundary, \cite{GouldenYong} for the disk and
		\cite{BurmanZvonkine} for arbitrary surfaces and embeddings with a
		connected complement); they are in one-to-one correspondence with
		sequences of transpositions in the symmetric group. The cyclic
		structure of the product of the transpositions describes faces of the
		graph (i.e.\ connected components of its complement). The quantity of
		graphs with given faces is called a (classical) Hurwitz number and has
		been studied intensively during the last decades --- the research
		involving dozens of authors and hundreds of works; its thorough review
		is far outside the scope of this paper. The algebraic model for
		twisted Hurwitz numbers, studied in Section \ref{Sec:Algebra}, is a
		generalization of this correspondence. The section also contains an
		explicit formula for the cut-and-join equation (Theorem
		\ref{Th:CJtwisted}) and for the generating function of the twisted
		Hurwitz numbers (Theorem \ref{Th:HurwExpl}).
		
		In the last section we study the notion of the branched covering
		defined in \cite{CD} and show that they form an algebro-geometric
		model for twisted Hurwitz numbers.
		
		\subsection*{Acknowledgements}
		
		The research was partially funded by the HSE University Basic Research
		Program and by the International Laboratory of Cluster Geometry NRU
		HSE (RF Government grant, ag.~No.~075-15-2021-608 dated
		08.06.2021). The research of the first-named author was also supported
		by the Simons Foundation IUM grant 2021.
		
		The authors are grateful to G.\,Chapuy and M.\,Dołęga who explained
		them a broader perspective beyond the phenomena studied and also
		helped much with the combinatorics of the Laplace--Beltrami equation.
		
		We dedicate this article to the memory of our colleague Sergey
		Natanzon who fell victim of the COVID-19 pandemic. The subject of our
		research, to which Prof.~Natanzon was always attentive, matches some
		of his favourite scientific topics --- Hurwitz numbers and manifolds
		with boundary.
		
		\section{Surgery: a topological model for twisted Hurwitz numbers}
		\subsection{General definitions}
		
		\begin{definition}\label{Df:DBS}
			{\em Decorated-boundary surface} (DBS) is a triple $(M,(a_1 \DT,
			a_n), \linebreak (o_1 \DT, o_n))$ where $M$ is a compact surface
			($2$-manifold) with boundary, $a_1 \DT, a_n \in \dM$ are marked
			points and every $o_i$ is a local orientation of $\dM$ (hence, of
			$M$ itself, too) in the vicinity of the point $a_i$, such that
			\begin{itemize}
				\item every connected component of $M$ has nonempty boundary, and
				
				\item every connected component of $\dM$ contains at least one point
				$a_i$.
			\end{itemize}
		\end{definition}
		
		The DBS $M$ and $M'$ with the same number $n$ of marked points are
		called equivalent if there exists a homeomorphism $h: M \to M'$ such
		that $h(a_i) = a_i'$ and $h_*(o_i) = o_i'$ for all $i = 1 \DT, n$.
		The set of equivalence classes of DBS with $n$ marked points will be
		denoted $\DBS_n$.
		
		Pick marked points $a_i, a_j \in \dM$, and let $\eps_i, \eps_j \in
		\{+,-\}$. Consider points $a_i', a_j' \in \dM$ lying near $a_i, a_j$
		and such that the boundary segment $a_ia_i'$ is directed along the
		orientation $o_i$ if $\eps_i = +$ and opposite to it if $\eps_i = -$;
		the same for $j$. Now take a long narrow rectangle (``a ribbon''
		henceforth) and glue its short sides to $\dM$ as shown in
		Fig.~\ref{Fg:GlueRibbon}. The result of gluing is homeomorphic to a
		surface $M'$ with the boundary $\dM' \owns a_1 \DT, a_n$. The boundary
		of $M'$ near $a_i$ and $a_j$ contains a segment of $\dM$ (the ``old''
		part) and a segment of a long side of the ribbon glued (the ``new''
		part); define local orientations $o_i'$, $o_j'$ of $\dM'$ near $a_i$,
		$a_j$ so that the orientations of the ``old'' parts would be preserved
		(see bold curved arrows in Fig.~\ref{Fg:GlueRibbon}); for $k \ne i, j$
		take $o'_k = o_k$ by definition. Now $(M', (a_1 \DT, a_n), (o_1' \DT,
		o_n'))$ is a DBS, so we defined a mapping $G[i,j]^{\eps_i,\eps_j}:
		\DBS_n \to \DBS_n$ called {\em ribbon gluing}. The ribbon gluing
		$G[i,j]^{\eps_i,\eps_j}$ will be called twisted if $\eps_i \ne
		\eps_j$, and non-twisted otherwise; compare the left and the right
		picture in Fig.~\ref{Fg:GlueRibbon}.
		
		The inverse operation is called ribbon removal. To define it, take
		$\eps \in \{+,-\}$ and fix a smooth simple
		(i.e.\ non-selfintersecting) curve $\gamma$ on $M$ joining $a_i$ and
		$a_j$ and transversal to $\dM$ in its endpoints.  Take now a point
		$a_j' \in \dM$ near $a_i$ and $a_i' \in \dM$ near $a_j$ (NB the
		subscripts!) such that the segment $a_i a_j' \subset \dM$ is directed
		according to the orientation $o_i$ if $\eps = +$ and opposite to it if
		$\eps = -$, and consider a ``rectangle'' $\Pi$ like in
		Fig.~\ref{Fg:GlueRibbon}. Then $M' \bydef M \setminus \name{int}(\Pi)$
		is homeomorphic to a surface with the boundary $\dM' \owns a_1 \DT,
		a_n$. A local orientation $o_i'$ of $\dM'$ near $a_i$ is defined by
		the same rule as for the ribbon gluing: $o_i$ and $o_i'$ coincide on
		the intersection $\dM' \cap \dM$ near $a_i$; the same for $o_j'$, and
		also $o_k' \bydef o_k$ for all $k \ne i,j$. Now $(M', (a_1 \DT, a_n),
		(o_1' \DT, o_n'))$ is a DBS obtained from the original DBS by the
		operation $R[\gamma]^\eps$ of ribbon removal.
		
		\begin{remark}
			Local orientations $o_i$ and $o_j$ of $\dM$ define orientations of
			the normal bundle to $\gamma$; we call $\gamma$ {\em non-twisting}
			if the orientations are the same, and {\em twisting}
			otherwise. Obviously, the segment $a_j a_j'$ is directed along the
			orientation $o_j$ if $\eps = +$ and $\gamma$ is non-twisting or
			$\eps = -$ and $\gamma$ is twisting; otherwise $a_j a_j'$ is
			directed opposite to $o_j$.
		\end{remark}
		
		\begin{figure}
			\center
			\includegraphics[scale=0.5]{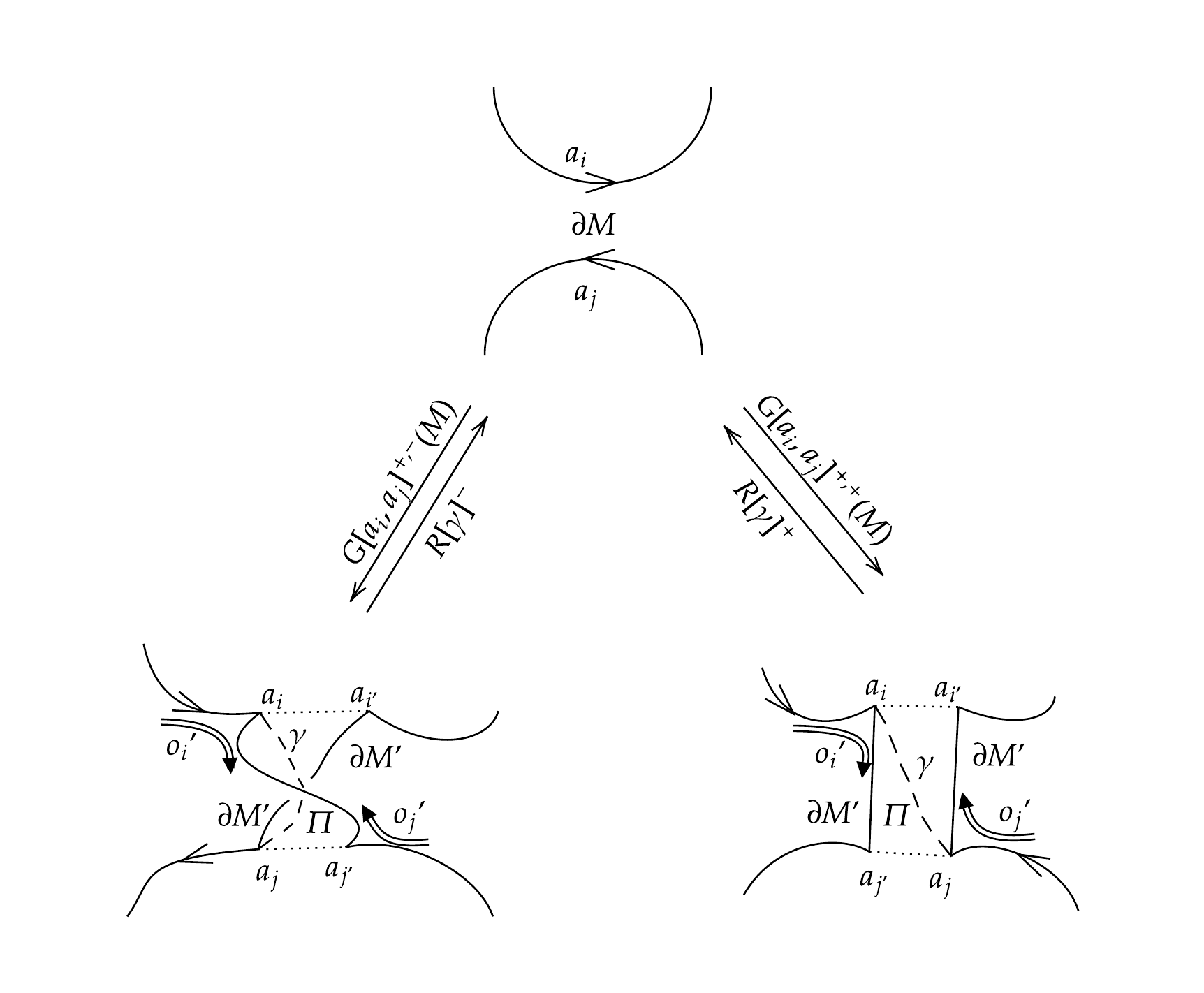} 
			\caption{Gluing ribbons}\label{Fg:GlueRibbon}
		\end{figure}
		
		The operation $R[\gamma]^\eps$ is a sort of inverse to ribbon
		gluing due to the following obvious statement:
		
		\begin{proposition}\label{Pp:Inv}
			\begin{enumerate}
				\item Let $i,j \in \{1 \DT, n\}$, $\eps_i, \eps_j \in \{+,-\}$ and
				$\gamma$ be a diagonal of the ribbon joining $a_i$ and $a_j$. Then
				$R[\gamma]^{\eps_i} G[i,j]^{\eps_i,\eps_j} = \name{id}_{\DBS_n}$.
				
				\item\label{It:Inv2} Let $\gamma$ be a simple smooth curve on $M$
				joining $a_i$ and $a_j$ and transversal to the boundary, and
				$\eps_i \in \{+,-\}$. Let $\eps_j \in \{+,-\}$ be defined as 
				$\eps_j = \eps_i$ if the curve $\gamma$ is
				non-twisting and $\eps_j = -\eps_i$ otherwise. Then
				$G[i,j]^{\eps_i,\eps_j} R[\gamma]^{\eps_i} = \name{id}_{\DBS_n}$.
			\end{enumerate}
		\end{proposition}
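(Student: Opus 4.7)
The plan is to verify both compositions by unfolding the definitions and tracking the three pieces of data attached to a DBS: the surface up to homeomorphism, the marked points, and the local orientations of the boundary at those points.

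For part~(1), I would observe that the ribbon attached by $G[i,j]^{\eps_i,\eps_j}$ is a rectangle $R$ glued to $\dM$ along two short boundary arcs emanating from $a_i$ and $a_j$. Its diagonal $\gamma$ joins $a_i$ to $a_j$ transversally to $\dM$, so it is a legitimate input to $R[\gamma]^{\eps_i}$. The ``rectangle'' $\Pi$ used by the removal may be chosen equal to $R$, and then $M' \setminus \name{int}(\Pi)$ is the original $M$. The marked points lie on the part of $\dM$ shared by all three surfaces, so they are unchanged. Both operations are defined so that the new local orientations agree with the old ones on the ``old'' part of the boundary, hence the composite returns each $o_k$ unchanged.

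For part~(2) the bookkeeping is reversed, but there is one delicate point: for the ribbon glued back by $G[i,j]^{\eps_i,\eps_j}$ to fit exactly into the hole left by removing $\Pi$, the pair $(\eps_i,\eps_j)$ must match the directions of the two boundary arcs of $\Pi$. Near $a_i$ the matching is automatic, since both operations use the same sign $\eps_i$ to dictate the direction of the relevant arc against $o_i$. Near $a_j$ one has to compare: the gluing convention says $\eps_j = +$ iff the ribbon's boundary arc there is directed along $o_j$, while the Remark preceding the proposition records that the arc cut out by $R[\gamma]^{\eps_i}$ near $a_j$ is directed along $o_j$ iff $\eps_i = +$ and $\gamma$ is non-twisting, or $\eps_i = -$ and $\gamma$ is twisting. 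The two conditions coincide precisely when $\eps_j = \eps_i$ for non-twisting $\gamma$ and $\eps_j = -\eps_i$ otherwise, which is exactly the hypothesis of~(2). Under this compatibility the ribbon fills the hole, and the orientation argument from part~(1) applies verbatim.

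The main---and essentially only---obstacle is the orientation bookkeeping in part~(2): one has to confirm that the case distinction on twisting/non-twisting $\gamma$ in the statement is exactly what is forced by the requirement that gluing and removal be mutually inverse. Once this sign verification is unwound, both identities reduce to the evident fact that a ribbon and a tubular neighbourhood of its diagonal are homeomorphic relative to their gluing arcs.
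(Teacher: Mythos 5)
Your proposal is correct: the paper itself offers no written proof (it labels the proposition an ``obvious statement''), and your unfolding of the definitions --- in particular the sign matching at $a_j$ via the preceding Remark on twisting versus non-twisting curves --- is exactly the verification the authors intend. The only caveat is cosmetic: in part (1) the removal's rectangle $\Pi$ is, by the definition, a thin quadrilateral around the diagonal meeting $\partial M$ only in small arcs at $a_i$ and $a_j$, so it is not literally equal to the glued ribbon $R$; but the complement then differs from $M$ by two collapsible flaps, so the composite is still the identity in $\DBS_n$ (which consists of equivalence classes), and the argument goes through unchanged.
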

		
		\begin{remark}\label{Rm:Euler}
			Gluing a ribbon decreases the Euler characteristics of the surface
			by $1$ and removal, increases it by $1$.
		\end{remark}
		
		\subsection{Ribbon decompositions}
		
		By Definition \ref{Df:DBS} every connected component of a DBS contains
		a marked point. $M \in \DBS_n$ is called {\em stable} if every its
		connected component either contains at least two marked points or is a
		disk (with one marked point only).
		
		Denote by $E_n \in \DBS_n$ a union of $n$ disks with one marked point
		on the boundary of each.
		
		\begin{proposition}\label{EulerChar}
			$M \in \DBS_n$ is stable if and only if it can be obtained by gluing
			several ribbons to $E_n$. If $M$ is stable then its Euler
			characteristics $\chi(M)$ does not exceed $n$, and the number of
			ribbons is equal to $n - \chi(M)$.
		\end{proposition}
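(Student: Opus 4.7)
\emph{Approach.} The proposition bundles three claims: (i) the number of ribbons equals $n-\chi(M)$; (ii) every stable $M$ admits a ribbon decomposition from $E_n$; (iii) any $M$ built from $E_n$ by ribbons is stable. Claim (i) is immediate from Remark~\ref{Rm:Euler}: since $\chi(E_n)=n$ and each gluing drops $\chi$ by one, $k$ ribbons yield $\chi(M)=n-k$. The real topological content is (ii), which I would prove by induction on $k:=n-\chi(M)$.

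\emph{Bound and base case.} For stable $M$ I first check $\chi(M)\le n$. Each connected component $M_j$ has nonempty boundary, so $\chi(M_j)\le 1$ with equality iff $M_j$ is a disk, and stability forces $n_j\ge\chi(M_j)$ in every case: a disk has $n_j\ge 1=\chi(M_j)$, while a non-disk has $n_j\ge 2 > 0\ge\chi(M_j)$. Summing gives $n\ge\chi(M)$, with equality exactly when every component is a one-pointed disk, i.e.\ $M=E_n$. This settles the base case $k=0$.

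\emph{Inductive step.} For $k>0$ I exhibit a simple arc $\gamma\subset M$ joining two marked points (transversal to $\dM$) such that ribbon removal $R[\gamma]^\eps$ produces a stable $M'\in\DBS_n$ with $\chi(M')=\chi(M)+1$; the induction hypothesis then writes $M'$ as $E_n$ with $k-1$ ribbons, and reattaching the removed ribbon via Proposition~\ref{Pp:Inv}\eqref{It:Inv2} recovers $M$. Pick a component $M_i$ with $n_i>\chi(M_i)$. In case (a), $M_i$ is a disk with $n_i\ge 2$: take $\gamma$ to be any simple chord between two marked points; removal splits the disk into two sub-disks, each inheriting a chord endpoint as a marked point, and stability is preserved. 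In case (b), $M_i$ is not a disk and $n_i\ge 2$ by stability: pick a properly embedded non-separating arc in $M_i$ and slide its endpoints along $\dM_i$ to marked points on the boundary circles that contain them (such marked points exist by the DBS axiom). Non-separation keeps $M_i$ connected after removal, so its $\ge 2$ marked points remain in a single component and stability holds.

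\emph{Converse and main obstacle.} The converse direction is routine: a connected component of a ribbon-glued surface comes from some $d\ge 1$ of the original disks of $E_n$ joined by a subset of the ribbons; if $d\ge 2$ it carries $\ge 2$ marked points, while if $d=1$ then, since ribbons in a decomposition connect distinct marked points, no ribbon attaches to it at all and it is still a disk with one marked point. Either way the result is stable. The main obstacle, and essentially the only nontrivial point, is the existence of the non-separating arc with prescribed marked-point endpoints in case (b); this follows from the classification of compact non-disk surfaces with boundary (which all have $\chi\le 0$ and nontrivial $H_1$, hence admit properly embedded non-separating arcs) combined with a standard isotopy-extension argument that slides the endpoints along $\dM_i$ to the marked points without altering the non-separation property.
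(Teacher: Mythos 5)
Your overall strategy coincides with the paper's: induction on $n-\chi(M)$, removal of a ribbon along a suitable arc between marked points, and re-gluing via assertion \ref{It:Inv2} of Proposition~\ref{Pp:Inv}; your converse direction, the bound $\chi(M)\le n$ (a per-component variant of the paper's Betti-number argument), and the disk case (a) are all sound. The genuine gap is in case (b). The operation $R[\gamma]^{\eps}$ requires $\gamma$ to join two \emph{distinct} marked points $a_i\ne a_j$ (ribbons never form loops in this framework: the diagonal graph is loopless, and algebraically $j_s\ne\tau(i_s)$). Your recipe --- take an arbitrary properly embedded non-separating arc and slide its endpoints to marked points on the boundary circles containing them --- does not guarantee this. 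Both endpoints of the chosen arc may lie on a single boundary circle carrying only one marked point (for instance, a genus-one component with two boundary circles, one marked point on each, and an arc running over the handle with both ends on the same circle). Sliding then sends both endpoints to the same marked point, which is not a legal ribbon diagonal; moreover an endpoint cannot be slid past the other endpoint of the arc and is confined to its own boundary circle, so no amount of isotopy rescues this choice of arc.

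The repair is exactly what the paper's Lemma~\ref{Lm:NonSep} does: choose the arc according to the boundary structure rather than fixing it afterwards. If $\partial M_i$ is disconnected, join a marked point on one boundary circle to a marked point on another by any simple arc; such an arc is automatically non-separating. If $\partial M_i$ is connected, then all of the component's $\ge 2$ marked points lie on that single circle, and a non-separating arc between two distinct marked points is obtained by routing it through a handle or a M\"obius band (in this case your sliding argument also works, since two distinct targets are available on the circle). With this adjustment your induction closes and is essentially the paper's proof; you should also note, as is implicit in the definition of $R[\gamma]^{\eps}$, that each newly created boundary circle contains one of $a_i,a_j$, so the result of the removal is again a DBS.
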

		
		\begin{proof}
			If a surface with a ribbon glued has a component with only one
			marked point then the gluing left this component intact. So, gluing
			a ribbon to a stable DBS keeps its stability, which proves the `only
			if' part of the proposition ($E_n$ is stable by definition).
			
			To prove the `if' part we will need a lemma:
			
			\begin{lemma}\label{Lm:NonSep}
				Let $n \ge 2$. Then for any $M \in \DBS_n$ which is connected and
				stable but is not a disk there exists a simple smooth
				nonseparating curve $\gamma$ joining two marked points.
			\end{lemma}
			
			``Nonseparating'' here means that the complement of $\gamma$ is
			connected, too.
			
			\begin{proof}[Proof of the lemma]
				$M$ contains at least two marked points. If $\dM$ is not connected
				then take two marked points on different components of $\dM$ and
				join them with a simple smooth curve $\gamma$; such curve is
				always nonseparating.
				
				Let now $\dM$ be connected. Then $M$ is a connected sum of a disk
				with several handles and/or Moebius bands. Let $S^1 \subset M$ be
				a circle separating the disk from a handle or from a Moebius band,
				and let $p, q \in S^1$ be two points. There exists a nonseparating
				curve $\delta$ inside the handle or the Moebius band joining $p$
				and $q$. Now pick a curve $\gamma_1$ joining $p$ with one marked
				point and $\gamma_2$ joining $q$ with another one. Then the union
				$\gamma \bydef \gamma_1 \cup \delta \cup \gamma_2$ is
				nonseparating as required.
			\end{proof}
			
			\begin{corollary}\label{Cr:DelStab}
				If $M \in \DBS_n$ is stable and $M \ne E_n$ then there exists a
				curve $\gamma$ on $M$ such that $M' \bydef R[\gamma]^\eps(M)$ is
				stable (regardless of $\eps$).
			\end{corollary}
			
			\begin{proof}[Proof of the corollary]
				A stable DBS different from $E_n$ contains a component with two or
				more marked points. If this component is a disk then take for
				$\gamma$ any simple curve joining these points. If it is not a
				disk then take for $\gamma$ the nonseparating curve of Lemma
				\ref{Lm:NonSep}.
			\end{proof}
			
			The proposition is now proved using induction on the Euler
			characteristic of $M$. Every component of $M$ is a manifold with
			nonempty bounbdary, so the $2$-nd Betti number of $M$ is zero and
			$\chi(M) = h_0(M) - h_1(M) \le h_0(M) \le n$; the equality is
			possible only if $M = E_n$. Let now $\chi(M) = n-m$, $m > 0$. Use
			Corollary \ref{Cr:DelStab} to obtain a curve $\gamma$ in $M$ such
			that $M' = R[\gamma]^+(M)$ is stable; by Remark \ref{Rm:Euler} one
			has $\chi(M') = n-m+1$, so by the induction hypothesis $M'$ can be
			obtained from $E_n$ by gluing $m-1$ ribbons. By assertion
			\ref{It:Inv2} of Proposition \ref{Pp:Inv} there exist $i$, $j$ and
			$\eps$ such that $M = G[i,j]^{+,\eps}(M')$ --- so, $M$ can be
			obtained by gluing $m$ ribbons.
		\end{proof}
		
		Let now, again, $M \in \DBS_n$ be obtained by gluing of $m$ ribbons to
		$E_n$:
		\begin{equation}\label{Eq:RibbonDecomp}
			M = G[i_m,j_m]^{\eps_m,\delta_m'} \dots
			G[i_1,j_1]^{\eps_1,\delta_1}E_n
		\end{equation}
		(that's what we will be calling a {\em ribbon decomposition} of
		$M$). For every ribbon, draw a diagonal joining its vertices $a_{i_k}$
		and $a_{j_k}$, and assign the number $k$ to it. The union of the
		diagonals is a graph $\Gamma \subset M$ with $m$ numbered edges $r_1
		\DT, r_m$ and the marked points $a_1 \DT, a_n$ as vertices; we call it
		a {\em diagonal graph} of the ribbon decomposition.
		
		Let $a_i$ be a marked point of $M$, $\Gamma \subset M$ be a diagonal
		graph of a ribbon decomposition, and let $\ell_1 \DT, \ell_k$ be the
		numbers of the edges of $\Gamma$ having $a_i$ as an endpoint, listed
		left to right according to the orientation $o_i$; denote $\Pass(a_i)
		\bydef (\ell_1 \DT, \ell_k)$.
		
		\begin{theorem}\label{Th:Prop}
			The diagonal graph $\Gamma$ has the following properties:
			\begin{enumerate}
				\item\label{It:Embed} (embedding) $\Gamma$ is embedded: its edges do
				not intersect one another or the boundary of $M$ except at
				endpoints.
				
				\item\label{It:Unimod} (anti-unimodality) For every vertex $a_i$ the
				sequence $\Pass(a_i) = (\ell_1 \DT, \ell_k)$ is anti-unimodal:
				there exists $p \le k$ such that $\ell_1 \DT> \ell_p \DT< \ell_k$.
				
				\item\label{It:TwoSide} (twisting rule) In the notation of the above
				call the edges $\ell_1 \DT, \ell_p$ negative at the endpoint
				$a_i$, and edges $\ell_p \DT, \ell_k$, positive (note that
				$\ell_p$ is both). Then any twisting edge of $\Gamma$ is positive
				at one of its endpoints and negative at the other one, and any
				non-twisting edge is either positive at both endpoints or negative
				at them.
				
				\item\label{It:Retract} (retraction) The graph $\Gamma$ is a
				homotopy retract of the surface $M$.
			\end{enumerate}
		\end{theorem}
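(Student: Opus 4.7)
The plan is to establish the four properties simultaneously by induction on the number $m$ of ribbons in the decomposition \eqref{Eq:RibbonDecomp}. The base case $m=0$ gives $M=E_n$ with $\Gamma$ the collection of $n$ isolated marked points, for which each assertion is vacuous or reduces to the contractibility of a disk. For the inductive step, set
\[
M' \bydef G[i_{m-1},j_{m-1}]^{\eps_{m-1},\delta_{m-1}} \dots G[i_1,j_1]^{\eps_1,\delta_1} E_n,
\]
so that $M = G[i_m,j_m]^{\eps_m,\delta_m}(M')$, and let $\Gamma' \bydef \Gamma \setminus \{r_m\}$; by induction $\Gamma' \subset M'$ satisfies properties \ref{It:Embed}--\ref{It:Retract}. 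Property \ref{It:Embed} for $\Gamma$ is then immediate, as the newly glued ribbon meets $M'$ only along its two short sides on $\partial M'$, so the open arc $r_m \setminus \{a_{i_m},a_{j_m}\}$ lies in the interior of the ribbon, disjoint from $\Gamma'$ and from $\partial M$.

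Properties \ref{It:Unimod} and \ref{It:TwoSide} rest on the following local observation. When the $k$-th ribbon is glued at a marked point $a_i \in \{a_{i_k},a_{j_k}\}$, its short side is attached to a short boundary segment running from $a_i$ in the $o_i$-positive direction if the matching orientation symbol is $+$ and in the $o_i$-negative direction if it is $-$; this segment lies on the outside of every ribbon already attached at $a_i$ on the chosen side, so the new diagonal exits $a_i$ at the outermost angular position there, taking the rightmost place in $\Pass(a_i)$ when the symbol is $+$ and the leftmost place when it is $-$. Applied to the $m$-th ribbon, this shows that $\Pass(a_{i_m})$ is obtained from $\Pass'(a_{i_m})$ by appending or prepending the label $m$; since $m$ is maximal in the new sequence, anti-unimodality is preserved and the position of the minimum $\ell_p$ is unchanged, so the polarities of every older edge at $a_{i_m}$ remain the same and the inductive twisting rule persists for them. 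The same analysis applies at $a_{j_m}$. For the new edge $r_m$ itself, the observation yields that $r_m$ is positive at $a_{i_m}$ iff $\eps_m = +$ and positive at $a_{j_m}$ iff $\delta_m = +$, so the two polarities of $r_m$ disagree iff $\eps_m \ne \delta_m$ iff the ribbon is twisted, which is property \ref{It:TwoSide} for $r_m$.

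For property \ref{It:Retract}, fix a deformation retract $K$ of the rectangular ribbon onto its diagonal $r_m$ that collapses each short side linearly onto the corresponding corner $a_{i_m}$ or $a_{j_m}$. The inductive deformation retract $H'$ of $M'$ onto $\Gamma'$ can be arranged so that it collapses the two boundary segments $a_{i_m}a_{i_m}'$ and $a_{j_m}a_{j_m}'$ linearly to $a_{i_m}$ and $a_{j_m}$ along themselves (the interiors of these segments contain no vertex of $\Gamma'$, so this condition only constrains $H'$ in an arbitrarily small collar). Then $K$ and $H'$ agree on the identified short sides at every moment of the homotopy and combine to a deformation retract of $M$ onto $\Gamma' \cup r_m = \Gamma$. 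The main obstacle throughout is the local observation underlying \ref{It:Unimod} and \ref{It:TwoSide}: making it rigorous requires unwinding the ribbon-gluing construction near a marked point carefully enough to confirm that the boundary segment used by each successive gluing truly sits outside every earlier ribbon attached at $a_i$ on the same side of $o_i$, so that the new diagonal is strictly separated from the older ones by the full stack of intermediate ribbons on that side.
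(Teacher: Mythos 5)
Your proposal is correct and follows essentially the same route as the paper: induction on the number of ribbons, with the embedding, anti-unimodality and twisting rule for the new edge obtained from the same local analysis of where the freshly glued ribbon's diagonal sits in $\Pass(a_{i_m})$ and $\Pass(a_{j_m})$, and the retraction obtained by gluing a retraction of the rectangle onto its diagonal to the inductive retraction of $M'$ onto $\Gamma'$. The only cosmetic difference is that the paper builds the condition ``collapse small boundary segments around $a_i$ to $a_i$'' into the induction hypothesis, whereas you arrange it a posteriori; both devices serve the same purpose.
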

		
		\begin{proof}
			Induction by the number $m$ of ribbons; the base $m = 0$ is
			obvious. For any $m > 0$, let $M = G[i_m,j_m]^{\eps_m \delta_m}M'$,
			and $\Gamma' \subset M' \subset M$ be the union of all the edges of
			$\Gamma$ except $m$.
			
			Assertion \ref{It:Embed}: the internal points of the edge $m$ lie in
			the interior of the ribbon $r_m = M \setminus M'$ and thus 
			belong neither to $\Gamma'$ nor to $\dM$.
			
			Assertion \ref{It:Unimod}: after gluing the ribbon $r_m$ to $M'$,
			the edge $m$ is either the leftmost or the rightmost of all the
			edges ending at $a_{i_m}$. Thus, if $\Pass(a_{i_m}) = (\ell_1 \DT,
			\ell_k)$ then either $\ell_1 = m$ and $\ell_2 \DT, \ell_k$ is
			anti-unimodal by the induction hypothesis, or $\ell_k = m$ and
			$\ell_1 \DT, \ell_{k-1}$ is anti-unimodal. In both cases $\ell_1
			\DT, \ell_k$ is anti-uninmodal.
			
			Assertion \ref{It:TwoSide} is true for the edges of $\Gamma' \subset
			M'$ by the induction hypothesis. Apparently, this is preserved after
			the ribbon $r_m$ is glued. The edge $m$ is the diagonal of $r_m$;
			the long sides of $r_m$ lie in $\partial M$, and therefore the
			edge $m$ is adjacent to $\partial M$ at both its endpoints, from the
			right for one of them and from the left for the other. This proves
			assertion \ref{It:TwoSide} for the edge $m$, too.
			
			To facilitate induction for assertion \ref{It:Retract}, we make it a
			bit stronger: fix, for every $i$, a small segment $e_i \subset \dM$,
			$a_i \in e_i$, and prove that there exists a homotopy retraction $f:
			M \to \Gamma$ such that $f(x) = a_i$ for all $x \in e_i$.
			
			By the induction hypothesis, such $f$ exists for $M'$ and
			$\Gamma'$. W.l.o.g.\ the ribbon $r_m$ containing $a_i$ and $a_j$ is
			glued to $M'$ so that its short sides lie within the segments $e_i$
			and $e_j$. Thus, the induction step is reduced to the following
			obvious statement: there exists a homotopy retraction of a rectangle
			$\Pi$ onto its diagonal $[ab]$ sending short sides and small
			neighbourhoods of the points $a, b \in \partial\Pi$ to the points
			$a$ and $b$.
		\end{proof}
		
		Let now $M \in \DBS_n$ and let $\Gamma \subset M$ be an embedded
		loopless graph with the vertices at the marked points of $M$ and the
		edges numbered $1 \DT, m$. We call $\Gamma$ {\em properly embedded} if
		it satisfies all the assertions of Theorem \ref{Th:Prop}: embedding,
		anti-unimodality, twisting rule and retraction. Connected components
		of the complement $M \setminus \partial M \setminus \Gamma$ will be
		called {\em faces}; connected components of $\partial M \setminus
		\{a_1 \DT, a_n\}$, {\em external edges}, and connected components of
		$\Gamma \setminus \{a_1 \DT, a_n\}$, {\em internal edges}.
		
		\begin{theorem}\label{Th:Cell}
			Vertices, edges and faces of a properly embedded graph $\Gamma$ form
			a cell decomposition of $M$ (as $0$-cells, $1$-cells and $2$-cells,
			respectively) such that every face is adjacent to exactly one
			external edge. The total number of faces is $n$.
		\end{theorem}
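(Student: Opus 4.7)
The plan is induction on the number $m$ of edges of $\Gamma$. For the base $m=0$, the retraction property \ref{It:Retract} forces every connected component of $M$ to be contractible, so, in view of the DBS axioms, $M = E_n$ and the cell decomposition is the obvious one: $n$ disk faces, each adjacent to a single external edge.

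For the inductive step, I would use anti-unimodality \ref{It:Unimod} to single out the edge with the largest label $m$: since $m$ is maximal, this edge must occupy an extreme (leftmost or rightmost) position in $\Pass(a_i)$ and $\Pass(a_j)$ at its two endpoints. With $\eps \in \{+,-\}$ dictated by the twisting rule \ref{It:TwoSide}, I apply the ribbon removal $R[\gamma]^\eps$, where $\gamma$ is the $m$-th edge, to obtain a DBS $M'$ with $\Gamma' := \Gamma \setminus \{\gamma\}$ embedded. The key auxiliary claim is that $\Gamma' \subset M'$ is again properly embedded: embedding and anti-unimodality are inherited (removing the maximal label from an anti-unimodal sequence leaves one), the retraction property follows because the excised ribbon deformation-retracts onto $\gamma$, and the twisting rule reduces to a local check of how the boundary orientations near $a_i, a_j$ are transformed by $R[\gamma]^\eps$. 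The inductive hypothesis then yields a cell decomposition of $M'$ into $n$ disk faces, each adjacent to a single external edge.

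To finish, I glue the ribbon back: the diagonal $\gamma$ splits the ribbon's interior into two open triangles $T_1, T_2$, with $T_1$ adjacent to the short segment $a_j a_j'$ and $T_2$ adjacent to $a_i a_i'$. Let $F'_i, F'_j$ be the (unique) faces of $M'$ whose external edges contain those short segments. If $F'_i \ne F'_j$, the two new faces of $M$ are $F'_i \cup T_2 \cup (a_i a_i')^\circ$ and $F'_j \cup T_1 \cup (a_j a_j')^\circ$, each obtained by gluing a disk to a triangle along a boundary arc, and therefore itself a disk; each picks up exactly one external edge of $M$, formed by concatenating a long side of the ribbon with a truncated external edge of $M'$. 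If instead $F'_i = F'_j = F'$, a single new face $F' \cup T_1 \cup T_2 \cup \cdots$ appears: it is still a disk (successive gluings of disks along disjoint boundary arcs) and remains connected despite the diagonal, because $T_1$ and $T_2$ both attach to the same $F'$. In all cases the face count remains $n$, in agreement with the direct Euler check $F = \chi(M) - V + E = (n-m) - n + (m+n) = n$, with $\chi(M)=n-m$ supplied by the retraction property and $E_{\text{ext}}=n$ coming from the decomposition of $\partial M$ by the marked points.

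The principal obstacle is the auxiliary claim that the ``properly embedded'' condition survives ribbon removal. Embedding, anti-unimodality and retraction are essentially formal; the subtle point is the twisting rule, where one must trace how the local orientations $o_i, o_j$ interact with the change of $\partial M$ under $R[\gamma]^\eps$ for the edges of $\Gamma'$ still incident to $a_i$ or $a_j$. A subsidiary bit of bookkeeping is to identify the unique external edges of $M'$ containing the short segments $a_i a_i'$ and $a_j a_j'$, but this is already furnished by the single-external-edge clause in the inductive hypothesis.
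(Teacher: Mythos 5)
Your plan is a different route from the paper's (the paper does not induct here: it covers $M$ by neighbourhoods of vertices, edges and faces, computes $\chi(M)=\sum_i\chi(f_i)-m$, compares with $\chi(M)=\chi(\Gamma)=n-m$ from the retraction property, and then uses the fact that a face closure cannot retract to its boundary to force each face to own an external edge, yielding $k=n$ and $\chi(f_i)=1$). The inductive step you propose contains a genuine gap at its pivot: the auxiliary claim that $\Gamma'=\Gamma\setminus\{\gamma\}$ is properly embedded in $M'=R[\gamma]^{\eps}(M)$. Embedding and anti-unimodality are indeed formal, and the twisting rule is the local check you flag; but the serious condition is the \emph{retraction} property, which you dismiss with ``the excised ribbon deformation-retracts onto $\gamma$.'' That observation does not produce a retraction $M'\to\Gamma'$: the given retraction $M\to\Gamma$ may send large parts of $M'$ onto the removed edge $\gamma$, and there is no canonical way to push these back into $\Gamma'$ unless you already understand the complement of $\Gamma$ in $M$ --- which is exactly the content of the theorem. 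Indeed, given the other three conditions, ``$M'$ retracts to $\Gamma'$'' is essentially equivalent to the faces of $\Gamma'$ being disks each meeting $\partial M'$, i.e.\ to the statement you are inducting on; assuming it in order to invoke the inductive hypothesis is close to circular. A matching $\chi$ count ($\chi(M')=n-m+1=\chi(\Gamma')$) is necessary but not sufficient, so this step needs a genuine construction or a different argument.

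It is telling that the paper runs precisely your induction --- remove the top-labelled edge, which anti-unimodality places in extreme position at both endpoints, then reglue --- but only in the \emph{subsequent} theorem (properly embedded $\Rightarrow$ diagonal graph of a ribbon decomposition), after Theorem \ref{Th:Cell} is already available: the cell structure is what lets one build the ribbon $H$ from the two triangles inside the adjacent faces and makes the ``immediate check'' that $\Gamma'\subset M'$ is proper actually immediate. So your regluing analysis of how faces merge or split (which is fine in outline, including the $F'_i=F'_j$ case, though you should also verify there that the merged face still meets exactly one external edge) is the right picture for that later theorem, but as a proof of Theorem \ref{Th:Cell} itself it rests on an unproved, and not obviously independently provable, preservation claim. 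To repair it you would either have to construct the retraction $M'\to\Gamma'$ directly (a real piece of work), or abandon the induction and argue as the paper does via the Euler characteristic of the open cover together with the observation that every face closure must contain an external edge.
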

		
		\begin{proof}
			Let $\Gamma$ have $k$ faces $f_1 \DT, f_k$. Cover $M$ with the open
			subsets shown in Fig.~\ref{Fg:EulChar}.
			
			\begin{figure}
				\center
				\includegraphics[scale=1]{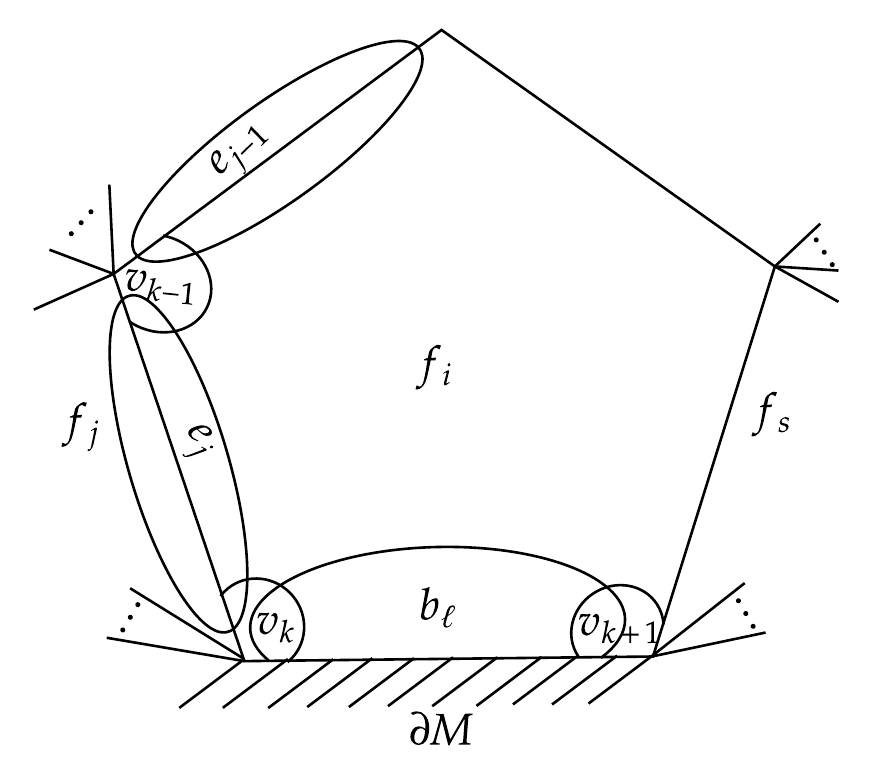} 
				\caption{The open cover of $M$}\label{Fg:EulChar}
			\end{figure}
			
			Sets $e_i$ (neighbourhoods of internal edges) are homeomorphic to
			disks, $b_i$ (neighbourhoods of external edges) and $v_i$
			(neighbourhoods of vertices), to half-disks; topology of faces $f_i$
			is yet to be described. Connected components of all the nonempty
			intersections of the sets (including faces) are homeomorphic to
			disks or half-disks, too.
			
			The nonempty intersections are:
			
			\begin{itemize}
				\item $2m$ connected components of $f_i \cap e_j$ for all $i, j$;
				\item $n$ components of $f_i \cap b_j$ for all $i, j$;
				\item If $\delta_j$ is the valency of 
				the $j$-th vertex of the graph, then there are
				$\delta_j+1$ components of $f_i \cap v_j$ for all $i$. The total
				number of components in $f_i \cap v_j$ is thus $\sum_j
				(\delta_j+1) = 2m + n$;
				\item $2m$ components of $e_i \cap v_j$, for all $i, j$;
				\item $2n$ components of $b_i \cap v_j$, for all $i, j$;
				\item $4m$ components of $f_i \cap e_j \cap v_k$, for all $i, j, k$;
				\item $2n$ components of $f_i \cap b_j \cap v_k$.
			\end{itemize}
			
			Thus the Euler characteristics of $M$ is
			\begin{align*}
				\chi(M) &= \sum_{i=1}^k \chi(f_i)+m+n+n-2m-n-(2m+n)-2m-2n+4m+2n\\
				&= \sum_{i=1}^k \chi(f_i)-m
			\end{align*}
			On the other hand, $\Gamma$ is a retract of $M$, so $\chi(M) =
			\chi(\Gamma) = n-m$, hence $\sum_{i=1}^k\chi(f_i) = n$. Faces are
			connected open $2$-manifolds, so $\chi(f_i) \le 1$ for every $i$,
			and therefore $n \le k$.
			
			Closure of a face is a compact manifold with boundary, so it cannot
			retract to its boundary. It means that the boundary of any face is
			not a subset of the graph and must contain an external edge. The
			total number of external edges is $n$, and an external edge belongs
			to the boundary of one face only. This implies $n \ge k$ and
			therefore $n = k$ and $\chi(f_i) = 1$ for every $i = 1 \DT, k$.
			
			So each $f_i$ is a disk. Its closure contains one external edge and
			$k_i$ internal ones, as well as vertices, so it is an image of the
			map $Q_i$ from some $(k_i+1)$-gon to $M$. Every $Q_i$ sends sides of
			the polygon to the edges and vertices to vertices, so collectively
			the $Q_i$, $i = 1 \DT, k$, are characteristic maps of a cell
			decomposition.
		\end{proof}
		
		Theorem \ref{Th:Cell} allows to prove the inverse of Theorem \ref{Th:Prop}:
		
		\begin{theorem}
			Let $M \in \DBS_n$ be stable and $\Gamma \subset M$ be a properly
			embedded graph. Then $\Gamma$ is the diagonal graph of a ribbon
			decomposition of $M$.
		\end{theorem}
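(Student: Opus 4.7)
The plan is to prove the theorem by induction on $m$, the number of edges of $\Gamma$. The base case $m = 0$ is immediate from Theorem \ref{Th:Cell}: with no internal edges each of the $n$ faces is bounded by exactly one external edge and no internal edges, hence is a disk with a single marked point, so $M = E_n$ and the empty ribbon decomposition realizes $\Gamma$.

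For the inductive step, let $e$ be the edge carrying the largest label $m$, with endpoints $a_i, a_j$. Because the sequence $\Pass(a_i)$ is anti-unimodal and $m$ is the global maximum of the labels, $e$ must occupy the leftmost or rightmost slot of $\Pass(a_i)$, and similarly at $a_j$. I would then build an explicit thin rectangular neighbourhood $\Pi$ of $e$ whose short sides lie on $\dM$ near $a_i$ and $a_j$ and whose diagonal is $e$ itself: the cell decomposition from Theorem \ref{Th:Cell} tells us exactly which two face-corners flank $e$, and the extremality of $e$ in $\Pass(a_i)$ and $\Pass(a_j)$ guarantees that, on the appropriate side at each endpoint, the face-corner can be extended to $\dM$ along the vertex without crossing any other edge of $\Gamma$. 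This $\Pi$ is a genuine ribbon, twisted or not according to whether $e$ is twisting in the sense of the remark preceding Proposition \ref{Pp:Inv}.

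Now apply the ribbon removal $R[e]^{\eps}$ with $\eps$ determined by the positive/negative status of $e$ at $a_i$ (assertion \ref{It:TwoSide} of Theorem \ref{Th:Prop}); the twisting rule forces the status at $a_j$ to be compatible, so the removal is well-defined and matches a future ribbon gluing. Let $M' \bydef R[e]^{\eps}(M)$ and $\Gamma' \bydef \Gamma \setminus \{e\}$. I would then verify that $\Gamma'$ is properly embedded in $M'$: embedding is clear; anti-unimodality passes to the subsequence obtained by deleting the extremal entry $m$ from each relevant $\Pass(a_k)$, and the positive/negative classification of every surviving incidence is unchanged because ribbon removal is defined so that $o_k'$ agrees with $o_k$ on $\dM' \cap \dM$ near $a_k$; the twisting rule for each remaining edge is therefore unaltered. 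The retraction property of $\Gamma'$ in $M'$ is obtained by modifying the cell decomposition of Theorem \ref{Th:Cell}: the two face-corners adjacent to $e$ merge (or one face is cut open) across the removed ribbon into a face whose closure is still a disk, and the resulting CW structure on $M'$ has $\Gamma'$ as its $1$-skeleton, so $M'$ retracts onto $\Gamma'$ cell by cell exactly as in the last clause of Theorem \ref{Th:Prop}.

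By the inductive hypothesis applied to $(M', \Gamma')$, the graph $\Gamma'$ is the diagonal graph of a ribbon decomposition of $M'$ using $m-1$ ribbons. Gluing back $\Pi$ via $G[i,j]^{\eps_i,\eps_j}$ with $(\eps_i,\eps_j)$ dictated by the positive/negative status of $e$ at $a_i$, $a_j$ recovers $M$ by part (\ref{It:Inv2}) of Proposition \ref{Pp:Inv}, yielding a ribbon decomposition of $M$ with diagonal graph $\Gamma$. The step I expect to be the main obstacle is verifying the retraction property of $\Gamma'$ in $M'$: ribbon removal can either disconnect $M$ or, when $e$ lies on the boundary of a single face, alter face combinatorics in a subtle way. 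The cleanest way to handle both cases uniformly is to transfer the cell decomposition of Theorem \ref{Th:Cell} for $(M,\Gamma)$ directly to $(M',\Gamma')$, rather than trying to restrict or push forward a retraction of $M$ onto $\Gamma$.
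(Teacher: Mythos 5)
Your proposal is correct and follows essentially the same route as the paper: induction on the number of edges, observing via anti-unimodality that the top-labelled edge sits extremally at both endpoints, carving a ribbon neighbourhood of that edge out of the adjacent face corners supplied by the cell decomposition of Theorem \ref{Th:Cell}, removing it, applying the inductive hypothesis to $(M',\Gamma')$, and regluing by Proposition \ref{Pp:Inv}\,(\ref{It:Inv2}). The only difference is that you spell out the check that $\Gamma'$ remains properly embedded in $M'$ (which the paper dismisses as immediate), and your aside about face corners ``merging'' is slightly off --- the two faces flanking $e$ simply each lose a triangle and stay disks --- but this does not affect the argument.
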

		
		\begin{proof}
			Induction by the the number $m$ of edges of $\Gamma$. The base:
			$m=0$ means that $\Gamma$ consists of $n$ isolated vertices. Since
			$M$ is a retract of $\Gamma$, one has $M = E_n$.
			
			Let $m > 0$. The edge $e_m$ of $\Gamma$ joins the vertices $a_i$ and
			$a_j$ (necessarily different) and separates faces $f_p$ and $f_q$
			(which may be the same). By the anti-unimodality, $e_m$ is adjacent
			to $\dM$ at both $a_i$ and $a_j$. Using Theorem \ref{Th:Cell},
			consider a characteristic map $Q_p$ of the cell $f_p$. It maps the
			side $v_0 v_1$ of the polygon to the external edge of $f_p$ and the
			adjacent side $v_1v_2$, to $e_m$. Let $v' \in v_0v_1$ be a point
			near the vertex $v_1$, $a_i' \bydef Q_p(v') \in \partial M$;
			consider the image $T_p \bydef Q_p(v'v_1v_2) \subset M$ of the
			triangle $v'v_1v_2$. Then the union of $T_p$ and a similar triangle
			$T_q \subset f_q$ is a ribbon $H$ having $e_m$ as its diagonal.
			
			Let $\Gamma'$ be the graph $\Gamma$ with the edge $e_m$
			removed. Take $\eps = +$ if $\dM$ near $a_i$ is oriented towards
			$a_i'$, and $\eps = -$ otherwise. Then $\Gamma'$ is embedded into
			$M' \bydef R[e_m]^\eps(M)$; an immediate check shows that the
			embedding is proper, so $\Gamma'$ is the diagonal graph of a ribbon
			decomposition of $M'$ by the induction hypothesis. By Proposition
			\ref{Pp:Inv} $M$ can be obtained by gluing the ribbon $H$ to $M'$,
			which finishes the induction.
		\end{proof}
		
		\subsection{Oriented case and the orientation cover}
		
		A DBS $M$ is called {\em oriented} if all the local orientations $o_i$
		are consistent with a global orientation of the surface $M$. For an
		oriented $M$ the numbers of marked points read off the components of
		$\dM$ according to the orientation form a cyclic decomposition of some
		permutation $\sigma \in S_n$ called the {\em boundary permutation} of
		$M$ (here and below we denote by $S_n$ the permutation group). In
		other words, for any $k = 1 \DT, n$ the marked point adjacent to $a_k$
		in the positive direction of $\dM$ is $a_{\sigma(k)}$.
		
		It is easy to see that if $M$ is oriented and the gluing
		$G[i,j]^{\eps_i,\eps_j}$ is non-twisted ($\eps_i = \eps_j$) then
		$G[i,j]^{\eps_i,\eps_j}(M) \in \DBS_n$ is oriented, too. A ribbon
		decomposition
		\begin{equation}\label{Eq:OrientedRD}
			M = G[i_m,j_m]^{++} \dots G[i_1,j_1]^{++} E_n
		\end{equation}
		is called oriented; existence of such means, by obvious induction,
		that the surface $M$ is oriented.
		
		\begin{theorem}\label{Th:PropOrient}
			The diagonal graph $\Gamma$ of the oriented ribbon decomposition
			\eqref{Eq:OrientedRD} has the following properties (in addition to
			those granted by Theorem \ref{Th:Prop}):
			\begin{enumerate}
				\item (vertex monotonicity) For every vertex $a_i$ of $\Gamma$ the
				sequence $\Pass(a_i) = (\ell_1 \DT,\ell_k)$ is increasing: $\ell_1
				\DT< \ell_k$.
				\item (face monotonicity) For every face $f_i$ of $\Gamma$ the
				numbers $\ell_1 \DT, \ell_p$ of the internal edges adjacent to it
				are increasing if the count starts at the (only) external edge of
				$f_i$ and goes counterclockwise.
				\item (face separation) Every internal edge of $\Gamma$ separates
				two different faces.
				\item\label{It:BoundPerm} (boundary permutation) Let $a_{i_k}$ and
				$a_{j_k}$ be endpoints of the edge $e_k$ of $\Gamma$, $k = 1 \DT,
				m$. Then the boundary permutation of $M$ is equal to $(i_m j_m)
				\dots (i_1 j_1) \in S_n$.
			\end{enumerate}
		\end{theorem}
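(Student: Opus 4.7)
The plan is induction on the number $m$ of ribbons, with each of the four assertions verified under the addition of a single non-twisted ribbon. The base case $m=0$ is trivial: $M = E_n$ has boundary permutation equal to the identity (each disk being its own fixed-point cycle), and the remaining assertions are vacuous. For the inductive step, write $M = G[i_m,j_m]^{++} M'$, so that $M'$ admits an oriented decomposition into $m-1$ ribbons with diagonal graph $\Gamma' = \Gamma \setminus \{e_m\}$ satisfying all four properties by the induction hypothesis.

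Assertions~(1) and~(4) follow by direct inspection. Since $\eps_{i_m} = \eps_{j_m} = +$, the ribbon $r_m$ is attached immediately to the positive side of each endpoint, so the new edge $e_m$ becomes the rightmost edge at both $a_{i_m}$ and $a_{j_m}$. Since every previous edge has index smaller than $m$, appending $m$ at the right end of each $\Pass$-sequence preserves the increasing property. For the boundary permutation, tracing $\dM$ near the ribbon shows that the positive direction at $a_{i_m}$ now runs up a long side of $r_m$, emerges at the non-vertex point $a_{j_m}'$, and continues positively along old $\partial M'$ to $a_{\sigma'(j_m)}$. This gives $\sigma(i_m) = \sigma'(j_m)$; symmetrically $\sigma(j_m) = \sigma'(i_m)$, and $\sigma(k) = \sigma'(k)$ for all other $k$. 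Thus $\sigma$ differs from $\sigma'$ by the transposition $(i_m,j_m)$, which upon unwinding yields the asserted product.

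For assertions~(3) and~(2), the crucial observation is that by Theorem~\ref{Th:Cell} each face of $M'$ has exactly one external edge, and each external edge begins (in the positive direction) at a unique marked point; this yields a bijection $a_k \leftrightarrow F_k$ between marked points and faces of $M'$. Assuming $i_m \ne j_m$, the two short sides of $r_m$ lie on the external edges of two \emph{distinct} faces $F_{i_m}$ and $F_{j_m}$, each of which absorbs one of the two triangles into which $e_m$ divides $r_m$. The diagonal $e_m$ therefore separates these two augmented faces, establishing~(3); separation of older edges is inherited directly from $M'$. For face monotonicity, one traces the boundary of the augmented face containing $F_{i_m}$ and sees that its new external edge now runs from $a_{j_m}$ (through the ribbon's long side and old $\partial M'$) to $a_{\sigma(j_m)} = a_{\sigma'(i_m)}$; going counterclockwise from this external edge recovers the old internal-edge list of $F_{i_m}$ (increasing by the induction hypothesis) followed by $e_m$, whose index $m$ exceeds all old indices---still increasing.

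The main obstacle is precisely this geometric bookkeeping: one must see that the new external edges of the two augmented faces begin at the \emph{swapped} vertices $a_{j_m}$ and $a_{i_m}$, exactly mirroring the transposition $(i_m,j_m)$ in the boundary permutation, and that $e_m$ appears as the final edge in each counterclockwise edge-list. The degenerate case $i_m = j_m$ (a ribbon whose two short sides are attached near a single marked point) requires a brief separate verification, but it follows the same template.
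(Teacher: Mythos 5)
Your overall strategy---a single induction on $m$, adding one non-twisted ribbon and tracking how $\Pass$, the faces and the boundary permutation change---is a legitimate route and genuinely different from the paper's, which instead gets vertex monotonicity from the anti-unimodality of Theorem~\ref{Th:Prop}, deduces face monotonicity from vertex monotonicity, face separation from face monotonicity, and proves the boundary-permutation formula non-inductively, by using Theorem~\ref{Th:Cell} and letting the edge transpositions, applied in increasing order of their labels, move a vertex step by step along the boundary of the face containing a given external edge. Your treatment of assertions (1)--(3) is essentially sound (and, as a small simplification, the worry about $i_m=j_m$ is unnecessary: with $\eps_{i_m}=\eps_{j_m}=+$ the two attaching arcs would have to coincide, and indeed every edge of a diagonal graph joins two distinct marked points, as the paper itself uses).

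There is, however, a concrete gap in assertion (4). From your own recursion $\sigma(i_m)=\sigma'(j_m)$, $\sigma(j_m)=\sigma'(i_m)$, $\sigma(k)=\sigma'(k)$ otherwise, one gets $\sigma=\sigma'\circ(i_m\,j_m)$, i.e.\ the transposition of the \emph{last} ribbon is applied \emph{first}; unwinding gives $\sigma=(i_1\,j_1)(i_2\,j_2)\cdots(i_m\,j_m)$, which is the inverse of the asserted product $(i_m\,j_m)\cdots(i_1\,j_1)$ and in general a different permutation (e.g.\ for two ribbons on $E_3$ with $(i_1,j_1)=(1,3)$, $(i_2,j_2)=(1,2)$ one product is the $3$-cycle $1\mapsto2\mapsto3\mapsto1$ and the other its inverse). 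So the sentence ``which upon unwinding yields the asserted product'' does not follow from the recursion you derived: to obtain the stated formula the inductive step must produce left multiplication, $\sigma=(i_m\,j_m)\circ\sigma'$, which corresponds to a different boundary bookkeeping (the walk \emph{arriving} at $a_{i_m}$, rather than the one leaving it, being rerouted across the ribbon), and which convention holds is fixed by Fig.~\ref{Fg:GlueRibbon} and by the paper's reading of ``adjacent in the positive direction''. You must either redo the boundary trace so that it yields the left-multiplication recursion consistent with the statement, or explicitly reconcile the conventions; note that the discrepancy is invisible at the level of cycle types (the two products are inverse, hence of equal cyclic type), so it does not affect the Hurwitz-number applications, but it does affect the equality claimed in assertion (4) as written.
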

		
		\begin{proof}
			Vertex monotonicity is a particular case of anti-unimodality of
			Theorem \ref{Th:Prop}.
			
			If $\ell_j$ and $\ell_{j+1}$ are two internal edges on the boundary
			of $f_i$ sharing an endpoint $a$ then the orientation of the
			boundary near $a$ is consistent with the counterclockwise
			orientation of $f_i$. Then the vertex monotonicity implies $\ell_j <
			\ell_{j+1}$, which proves face monotonicity. The face monotonicity
			implies, in its turn, the face separation: as one moves around a
			face, the numbers of the internal edges seen are increasing and
			therefore cannot repeat.
			
			Let $a_k, a_s \in \dM$ be neighbouring vertices, that is, the
			endpoints of an external edge. By Theorem \ref{Th:Cell} and the face
			monotonicity, this is the sole external edge of a face $f$, its
			remaining sides being internal edges numbered $\ell_1 \DT< \ell_p$,
			as one moves from $a_k$ to $a_s$. Consider an action of $S_n$ on the
			vertices of $M \in \DBS_n$ by permuting their numbers; in
			particular, the transposition $(i_t j_t)$ exchanges the numbers of
			the vertices joined by the $t$-th edge of the diagonal graph,
			leaving the other vertices intact. So, the transposition
			$(i_{\ell_1}j_{\ell_1})$ moves $a_k$ to its neighbour at the face
			$f$; then the transposition $(i_{\ell_2},j_{\ell_2})$ (where $\ell_2
			> \ell_1$, so it is applied {\em after} the first one) moves it to
			the next vertex of the same face, etc.; eventually, $\sigma = (i_m
			j_m) \dots (i_1 j_1)$ moves $a_k$ to $a_s = a_{\sigma(k)}$.
		\end{proof}
		
		Every manifold $M$ (possibly with boundary) has the orientation cover,
		uniquely defined up to an obvious isomorphism: it is an oriented
		manifold $\wM$ of the same dimension together with a fixed-point-free
		smooth involution $\T: \wM \to \wM$ reversing the orientation and such
		that $M$ is diffeomorphic to its orbit space.
		
		The quotient map $\wM \to \wM/T = M$ is a $2$-sheeted covering,
		trivial iff $M$ is oriented. For $2$-manifolds with boundary there is
		
		\begin{lemma}\label{Lm:OrientCov}
			The orientation covering is trivial over the boundary of a
			$2$-manifold.
		\end{lemma}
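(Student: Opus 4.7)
The plan is to exhibit an orientable open neighborhood of $\partial M$ in $M$ and then invoke the general fact that the orientation double cover is trivial over any orientable open subset.

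First, I would apply the collar neighborhood theorem: there is an open neighborhood $U \subset M$ of $\partial M$ and a homeomorphism $\phi: \partial M \times [0,1) \to U$ restricting to the identity on $\partial M \times \{0\} = \partial M$. This is classical for topological (or smooth) manifolds with boundary and needs no adaptation for the non-orientable case.

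Second, I would observe that $\partial M$ is a $1$-manifold without boundary, so every connected component is a circle or an open interval; in particular $\partial M$ is orientable. The product $\partial M \times [0,1)$ is then orientable (pick orientations on each factor), and transporting via $\phi$ we get an orientation on $U \subset M$.

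Third, I would invoke the description of $\widehat M \to M$ as the cover classified by the first Stiefel--Whitney class $w_1(M) \in H^1(M; \mathbb Z/2)$: the restriction of this cover to a subspace $S$ is trivial iff $w_1(M)$ pulls back to zero on $S$, which happens whenever $S$ admits an orientable open neighborhood in $M$. Equivalently, a choice of orientation on $U$ determines a section of the $2$-sheeted cover $\widehat M|_U \to U$, hence a trivialization; restricting further to $\partial M \subset U$ yields $\widehat M|_{\partial M} \cong \partial M \sqcup \partial M$, as claimed.

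There is no real obstacle here; the only point requiring care is the existence of the collar, which is standard. Everything else — orientability of $1$-manifolds and triviality of a double cover over an orientable base — is immediate.
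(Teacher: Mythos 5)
Your argument is correct, but it takes a genuinely different route from the paper. You trivialize the cover by producing an orientable open neighborhood of $\partial M$: a collar $U \cong \partial M \times [0,1)$, which is orientable because a $1$-manifold is, so that an orientation of $U$ gives a section of the double cover over $U$ (equivalently $w_1(M)$ restricts to zero on $U$), hence a trivialization over $\partial M$. The paper instead argues by contradiction directly with the deck involution: if some boundary circle $C$ had connected preimage, that preimage would be a $\T$-invariant circle $C'$ on which the deck involution acts without fixed points; since a degree-$d$ self-map of $S^1$ has at least $\lvert d-1\rvert$ fixed points, $\T|_{C'}$ has degree $1$ and preserves the orientation of $C'$, and because $C'$ lies in the boundary (and $\T$ preserves the inward direction) this forces $\T$ to preserve the local orientation of $\widehat M$ along $C'$, contradicting that $\T$ is orientation-reversing. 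Your proof buys standard, conceptual machinery (collar theorem, classification of double covers) and isolates exactly where dimension $2$ enters, namely orientability of the collar; the paper's proof is more elementary and self-contained, needing neither the collar theorem nor $w_1$, only circle dynamics and the defining property of the orientation cover. One small point of care in your write-up: for the compact surfaces considered here $\partial M$ is a union of circles (no open intervals occur), though this does not affect the orientability claim you need.
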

		
		\begin{proof}
			The boundary $\partial M$ and its cover $\partial\wM$ are unions of
			circles. If the covering is nontrivial over the boundary then there
			is a component $C \subset \partial M$ covered by a $\T$-invariant
			circle $C' \subset \partial\wM$.
			
			A continuous map $A: S^1 \to S^1$ has at least $\lmod \deg A-1\rmod$
			fixed points, so the fixed-point-free map $\T: C' \to C'$ has degree
			$1$ and therefore, being a covering, preserves orientation. Since
			$C' \subset \partial \wM$, it means that $\T: \wM \to \wM$ also
			preserves local orientation at every point $a \in C'$. But $\T$ is
			orientation-reversing everywhere --- a contradiction.
		\end{proof}
		
		Let a fixed-point-free involution $\tau \in S_{2n}$ be defined by
		\eqref{Eq:DefTau}. The notion of an orientation cover can be extended
		to decorated-boundary surfaces as follows: $\wM \in \DBS_{2n}$ with
		the marked points $b_1 \DT, b_{2n}$ is called the orientation cover of
		$M \in \DBS_n$ with the marked points $a_1 \DT, a_n$ if $\wM$ is
		oriented and there exists a fixed-point-free smooth involution $\T:
		\wM \to \wM$ reversing the orientation and such that $\T(b_k) =
		b_{\tau(k)}$ for all $k = 1 \DT, 2n$, and also there exists a
		diffeomorphism $p: \wM/\T \to M$ between the orbit space and $M$ such
		that $p(\{b_k,b_{\tau(k)}\}) = a_k$ for all $k = 1 \DT, n$.
		
		For $M \in \DBS_n$ the surface $\wM$ and involution $\T: \wM \to \wM$
		are uniquely defined; the marked points are $p^{-1}(a_1) \bydef \{b_i,
		b_{i+n}\} \subset \partial\wM$. The numbering of the two points $b_i$
		and $b_{i+n}$ depends on the local orientation $o_i$ of $\dM$ at $a_i$
		and is fixed by the following rule: the mapping $p: \partial\wM \to
		\dM$ preserves the orientation at $b_i$ and reverses it at $b_{i+n}$,
		$i = 1 \DT, n$. Thus, for every $M \in \DBS_n$ an orientation cover
		$\wM \in \DBS_{2n}$ is unique.
		
		Let $1 \le i \le n$ and $\eps \in \{+,-\}$. Denote $i^\eps = 
		\begin{cases}
			i, &\eps = +,\\
			\tau(i), &\eps = -.
		\end{cases}$
		
		\begin{figure}
			\center
			\includegraphics[scale=0.8]{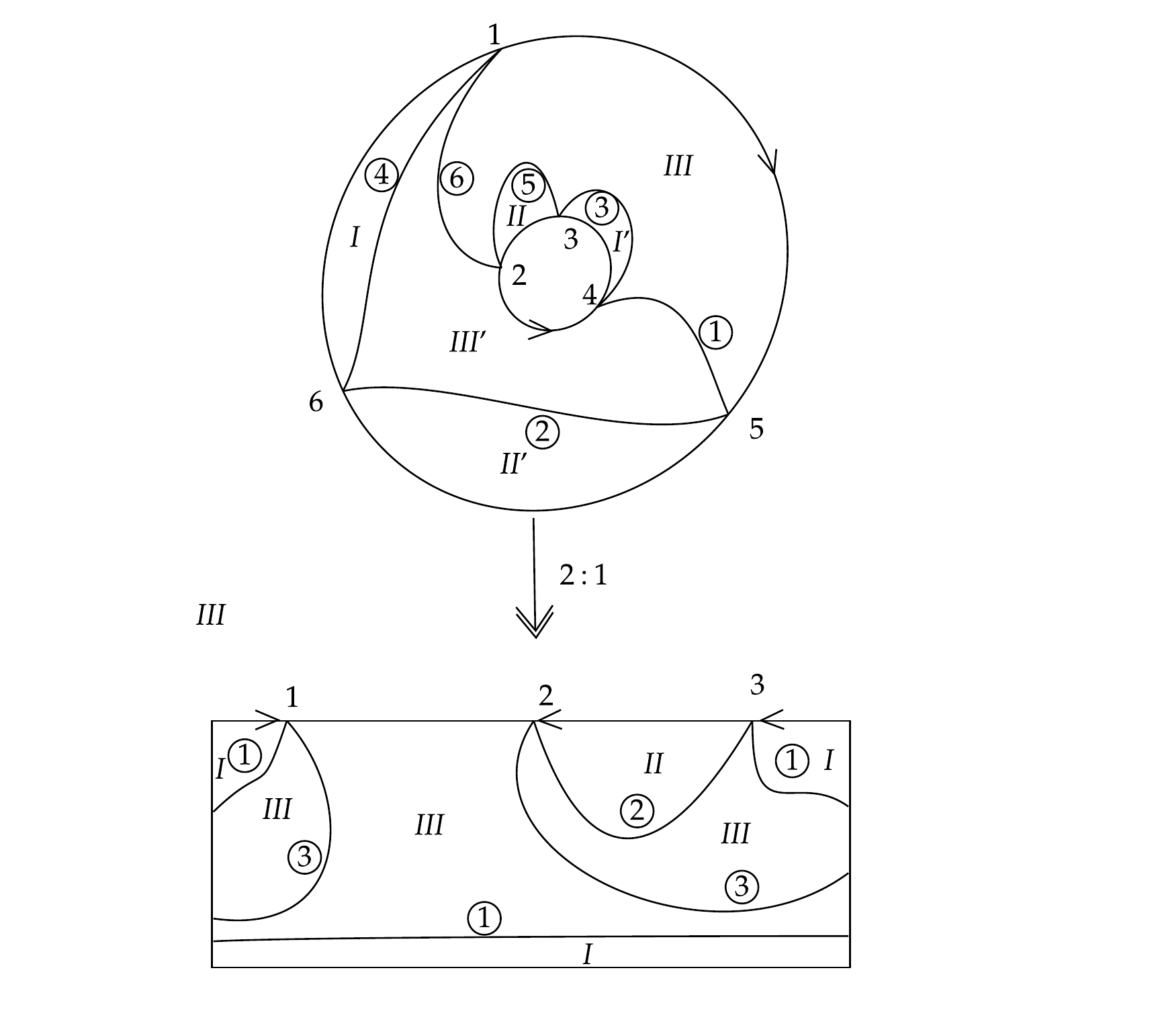} 
			\caption{Covering of the Moebius band\\
				$M= G[1,2]^{++} G[2,3]^{++} G[1,3]^{+-} E_3$ by an
				annulus}\label{Fg:CoverMob}
		\end{figure} 
		
		\begin{theorem}\label{Th:OrientViaGraph}
			Let $M$ be a DBS of equation \eqref{Eq:RibbonDecomp}. Then its
			orientation cover is
			\begin{equation}\label{Eq:2Cover}
				\wM = G[i_m^{\eps_m} j_m^{\delta_m}]^{++} \dots G[i_1^{\eps_1}
				j_1^{\delta_1}]^{++} G[i_1^{-\eps_1} j_1^{-\delta_1}]^{++} \dots
				G[i_m^{-\eps_m} j_m^{-\delta_m}]^{++} E_n.
			\end{equation}
			The involution $\T: \wM \to \wM$ maps the ribbon $r_\ell$ to the
			ribbon $r_{2m+1-\ell}$ for all $\ell = 1 \DT, 2m$.
		\end{theorem}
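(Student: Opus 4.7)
The plan is induction on $m$. For the base case $m=0$, $M = E_n$ is a disjoint union of $n$ disks (already oriented), so its orientation cover is $E_{2n}$ with the involution $\T$ swapping the two disks above each $a_i$ via an orientation-reversing diffeomorphism, labelled so that $\T(b_i) = b_{\tau(i)}$. This matches the right-hand side of \eqref{Eq:2Cover} for $m=0$ (with the base surface understood as $E_{2n}$).

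For the inductive step, write $M = G[i_m, j_m]^{\eps_m, \delta_m}(M')$ where $M'$ is built from $m-1$ ribbons, so that by induction $\wM'$ is given by \eqref{Eq:2Cover} with $m$ replaced by $m-1$, carrying the involution $\T'$ exchanging $r'_\ell \leftrightarrow r'_{2(m-1)+1-\ell}$. The heart of the argument is a local lifting statement: the single ribbon $R$ glued to $M'$ has preimage in $\wM'$ consisting of two disjoint ribbons $R^+$ and $R^-$, attached at $(b_{i_m^{\eps_m}}, b_{j_m^{\delta_m}})$ and $(b_{i_m^{-\eps_m}}, b_{j_m^{-\delta_m}})$ respectively, each with signs $(+,+)$ relative to the ambient orientation of $\wM'$; moreover $\T'$ restricts to an orientation-reversing diffeomorphism $R^+ \to R^-$ compatible with the attachments. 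This follows by unpacking the definitions: Lemma~\ref{Lm:OrientCov} ensures $R$ has two disjoint lifts, and the cover convention (local orientation of $\partial\wM'$ preserved at $b_i$, reversed at $b_{i+n}$) together with the very definition $i^\eps \in \{i, \tau(i)\}$ forces the lift at which the downstairs sign $\eps_m$ becomes a $+$ upstairs to be $b_{i_m^{\eps_m}}$, and similarly at $j_m$.

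Granted this, $\wM \bydef G[i_m^{\eps_m}, j_m^{\delta_m}]^{++} G[i_m^{-\eps_m}, j_m^{-\delta_m}]^{++}(\wM')$ is oriented (by the remark before Theorem~\ref{Th:PropOrient}), and $\T'$ extends to an orientation-reversing fixed-point-free involution $\T$ by swapping $R^+$ with $R^-$, with $\wM/\T \cong M$; by uniqueness of the orientation cover this $\wM$ is the cover of $M$. Reindexing so that $r_1 \bydef R^-$, $r_{2m} \bydef R^+$, and $r_{\ell+1} \bydef r'_\ell$ for $1 \le \ell \le 2(m-1)$, the inductive pairing $\T'(r'_\ell) = r'_{2m-1-\ell}$ becomes $\T(r_{\ell+1}) = r_{2m+1-(\ell+1)}$, and the new pair $\T(r_1) = r_{2m}$ fits the same formula, recovering the statement. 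The two new operators can be placed at the extremes of the composition in \eqref{Eq:2Cover} because they act at the pairwise distinct indices $i_m^{\pm\eps_m}, j_m^{\pm\delta_m}$ and so commute with each other. The delicate step is the lifting lemma: the downstairs sign $\eps_m$ describes the direction along $o_{i_m}$ from $a_{i_m}$ to $a_{i_m}'$, while the upstairs sign $+$ describes the direction along the ambient orientation of $\partial\wM'$ at the chosen lift, and these two interpretations must be carefully matched via the cover convention to confirm the identification.
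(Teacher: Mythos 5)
Your overall strategy (build the cover of $M$ over the cover of $M'$ by lifting the last ribbon) is legitimate, but the ``delicate step'' you flag is exactly where the argument goes wrong, and the error is not repairable by the commutation remark you give. By the paper's convention the covering map preserves the boundary orientation at $b_k$ for the lift you call $R^+$-side and reverses it at the $\tau$-conjugate point. Applying that convention consistently, the lift attached at $(b_{i_m^{\eps_m}}, b_{j_m^{\delta_m}})$ is indeed attached with signs $(+,+)$, but the other lift, attached at $(b_{i_m^{-\eps_m}}, b_{j_m^{-\delta_m}})$, is attached along \emph{negatively} directed boundary segments, i.e.\ with signs $(-,-)$, not $(+,+)$: the same computation that makes $\eps_m$ become a $+$ at $b_{i_m^{\eps_m}}$ makes it become a $-$ at $b_{i_m^{-\eps_m}}$. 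Consequently your identification $\wM = G[i_m^{\eps_m}, j_m^{\delta_m}]^{++}\, G[i_m^{-\eps_m}, j_m^{-\delta_m}]^{++}(\wM')$, with both new gluings performed \emph{after} all ribbons of $\wM'$, does not describe the orientation cover; your claimed extension of $\T'$ is then not even continuous, since $\T'$ carries the attaching segment of $R^+$ to the lift of the \emph{same} downstairs segment at the $\tau$-conjugate vertex, which is not where your $R^-$ is attached. A concrete check: for the paper's own example $M = G[1,2]^{++}G[2,3]^{++}G[1,3]^{+-}E_3$ (the M\"obius band, $\tau=(14)(25)(36)$), your recursive construction produces a surface whose boundary permutation has cycle type $(2,4)$, with $b_1$ and $b_{\tau(1)}=b_4$ on the same boundary circle; this contradicts Lemma~\ref{Lm:OrientCov}, whereas the true cover (and the surface of \eqref{Eq:2Cover}) has two boundary circles with three marked points each.

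The deeper point your proposal misses is that the order of gluing matters when ribbons share vertices: a later $^{++}$ gluing at a vertex attaches closer in the left-to-right order than earlier ones, so ``gluing $R^-$ last'' and ``gluing it first'' give genuinely different DBS, and converting the correct ``last with $(-,-)$'' attachment into the ``first with $(+,+)$'' form required by \eqref{Eq:2Cover} means commuting that operation past \emph{all} intermediate gluings, not merely past its partner $G[i_m^{\eps_m}, j_m^{\delta_m}]^{++}$ (which indeed commutes, but that is not the issue). This is precisely why the palindromic order in \eqref{Eq:2Cover} is there, and why the paper's proof strengthens the induction to track the sequences $\Pass(b_i)$ and $\Pass(b_{\tau(i)})$, using them to verify that the extended involution and quotient map are continuous along the sides of the two new ribbons. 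To salvage your approach you would need to replace the $(+,+)$ claim by the correct $(\pm,\pm)$ sign analysis and then prove, e.g.\ by exactly such adjacency bookkeeping, that the resulting attachment positions coincide with those produced by the gluing order of \eqref{Eq:2Cover}.
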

		
		See Figure \ref{Fg:CoverMob} for an example. The Roman numerals there
		mean faces, Arabic numbers, vertices, and the circled numbers mark
		diagonals of the ribbons.
		
		\begin{proof}
			Let $a_i$ be a marked point of $M$ with $\Pass(a_i) = (\ell_1 \DT,
			\ell_k)$ where $\ell_1 \DT> \ell_p \DT< \ell_k$, and let $b_i,
			b_{\tau(i)} \in \wM$ be its preimages. Use the induction on $m$ to
			prove the theorem showing simultaneously that $\Pass(b_i) =
			(m+1-\ell_1 \DT, m+1-\ell_p, m+\ell_{p+1} \DT, m+\ell_k)$ and
			$\Pass(b_{\tau(i)}) = (m+1-\ell_k \DT, m+1-\ell_{p+1}, \ell_p+m \DT,
			\ell_1+m)$.
			
			The base $m=0$ is obvious. For $m > 0$ let $M = G[i,j]^{\eps \delta}
			M'$ where $i,j,\eps,\delta$ mean $i_m, j_m, \eps_m, \delta_m$,
			respectively. If $\Pass_{M'}(a_i) = (\ell_1 \DT, \ell_k)$ where
			$\ell_1 \DT> \ell_p \DT< \ell_k$ then $\Pass_M(a_i) = (\ell_1 \DT,
			\ell_k, m)$ if $\eps = +$ and $\Pass_M(a_i) = (m, \ell_1 \DT,
			\ell_k)$ if $\eps = -$; the same for $a_j$ (depending on $\delta$
			instead of $\eps$).
			
			Denote by $\wM'$ the orientation cover of $M'$ and define $\wM$ by
			\eqref{Eq:2Cover}. By the induction hypothesis $\wM'$ is a subset of
			$\wM$ (a union of all the ribbons except $r_1$ and $r_{2m}$). Extend
			$\T: \wM' \to \wM'$ to the involution $\wM \to \wM$ sending $r_1$ to
			$r_{2m}$ and vice versa; also extend the homeomorphism $\rho:
			\wM'/\T \to M'$ to a map $\wM/\T \to M$ sending $r_1$ and $r_{2m}$
			to the $m$-th ribbon of $M$. To complete the proof we are to check
			that the extended $\T$ and $\rho$ are continuous on the boundary of
			the ribbons $r_1$ and $r_{2m}$.
			
			By the induction hypothesis, $\Pass_{\wM'}(b_i) = (m-\ell_1 \DT,
			m-\ell_p, \ell_{p+1}+m-1 \DT, \ell_k+m-1)$ and
			$\Pass_{\wM'}(b_{\tau(i)}) = (m-\ell_k \DT, m-\ell_{p+1}, \ell_p+m-1
			\DT, \ell_1+m-1)$. So, if $\eps = +$ then $\Pass_{\wM}(b_i) =
			(m+1-\ell_1 \DT, m+1-\ell_p, \ell_{p+1}+m \DT, \ell_k+m, 2m)$ and
			$\Pass_{\wM'}(b_{\tau(i)}) = (1, m+1-\ell_k \DT, m+1-\ell_{p+1},
			\ell_p+m \DT, \ell_1+m)$, and if $\eps = -$ then $\Pass_{\wM}(b_i) =
			(1, m+1-\ell_1 \DT, m+1-\ell_p, \ell_{p+1}+m \DT, \ell_k+m)$ and
			$\Pass_{\wM'}(b_{\tau(i)}) = (m+1-\ell_k \DT, m+1-\ell_{p+1},
			\ell_p+m \DT, \ell_1+m, 2m)$; the same for $b_j$ and $b_{\tau(j)}$,
			with $\delta$ instead of $\eps$.
			
			Thus, if $\eps = +$ then the ribbon $r_{2m}$ is adjacent to
			$r_{\ell_k+m}$ and the ribbon $r_1$, to $r_{m+1-\ell_k}$; the $m$-th
			ribbon of $M = G[i,j]^{\eps \delta} M'$ is adjacent to the ribbon
			numbered $\ell_k$. By the induction hypothesis, $\T$ exchanges
			$r_{\ell_k+m}$ and $r_{m+1-\ell_k}$, so the extensions of $\T$ and
			$\rho$ are continuous on the ``long'' sides of $r_{2m}$ and $r_1$
			containing $b_i$ and $b_{\tau(i)}$, respectively. The proof in the
			case $\eps = -$ is the same. A similar analysis of $\Pass(b_j)$ and
			$\Pass(b_{\tau(j)})$ for $\delta = +$ and $\delta = -$ shows that
			$\T$ and $\rho$ are continuous on the other sides of $r_{2m}$ and
			$r_1$, too.
		\end{proof}
		
		\section{Algebraic model and twisted cut-and-join equation}\label{Sec:Algebra}
		
		\subsection{Algebraic preliminaries and twisted Hurwitz numbers}\label{SSec:Prelim}
		
		Recall \cite{Coxeter} that the reflection group $B_n$ is the
		semidirect product $S_n \ltimes (\Integer/2\Integer)^n$ where $S_n$
		acts on $(\Integer/2\Integer)^n$ by permuting the factors; it is
		generated by reflections $s_{ij} \bydef (ij) \ltimes \one$ and $l_i
		\bydef \name{id} \ltimes \one_i$ where $\one \bydef (1 \DT, 1)$ and
		$\one_i \bydef (1 \DT, -1 \DT, 1)$ ($-1$ at the $i$-th place).  The
		group $B_n$ can be embedded into $S_{2n}$ as a centralizer $C(\tau)$
		where $\tau$, as above, is defined by \eqref{Eq:DefTau}; the
		isomorphism $A: C(\tau) \to B_n$ is given by $A(\sigma) =
		\lambda_\sigma \ltimes (\eps_\sigma^{(1)} \DT, \eps_\sigma^{(n)})$
		where
		\begin{equation*}
			\lambda_\sigma(i) = 
			\begin{cases}
				\sigma(i), &\sigma(i) \le n,\\    
				\sigma(i)-n, &\sigma(i) \ge n+1
			\end{cases} \qquad \text{and} \qquad \eps_\sigma^{(i)} = 
			\begin{cases}
				1, &\sigma(i) \le n, \\
				-1, &\sigma(i) \ge n+1.
			\end{cases}
		\end{equation*}
		
		Let $C^{\sim}(\tau) \bydef \{\sigma \in S_{2n} \mid \tau \sigma =
		\sigma^{-1} \tau\}$ (a ``twisted centralizer'' of $\tau$).
		
		\begin{lemma}
			Let $\sigma = c_1 \dots c_m \in C^{\sim}(\tau)$ where $c_1 \DT, c_m$ are
			independent cycles. Then for every $i$
			\begin{itemize}
				\item either there exists $j \ne i$ such that $c_i = (u_1 \dots
				u_k)$ and $c_j = (\tau(u_k) \dots \tau(u_1))$;
				\item or $c_i$ has even length $2k$ and looks like $c_i = (u_1 \dots
				u_k \tau(u_k) \dots \tau(u_1))$. 
			\end{itemize}
		\end{lemma}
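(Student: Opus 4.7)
The plan is to rewrite the defining condition as a conjugation and then analyze the induced action on the cycle decomposition. Since $\tau^2=\name{id}$, the relation $\tau\sigma=\sigma^{-1}\tau$ is equivalent to $\tau\sigma\tau^{-1}=\sigma^{-1}$. Write $\sigma=c_1\cdots c_m$ as a product of independent cycles. Conjugation by $\tau$ sends independent cycles to independent cycles of the same length, so $\tau\sigma\tau^{-1}=(\tau c_1\tau^{-1})\cdots(\tau c_m\tau^{-1})$ is again a decomposition into independent cycles. On the other hand $\sigma^{-1}=c_1^{-1}\cdots c_m^{-1}$ is also such a decomposition, with the same multiset of lengths. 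By the uniqueness of the cycle decomposition, conjugation by $\tau$ induces an involution $i\mapsto j(i)$ on the set of indices such that $\tau c_i\tau^{-1}=c_{j(i)}^{-1}$ for every $i$. The two bullets of the lemma correspond to the two types of orbits of this involution: $j(i)\ne i$ and $j(i)=i$.

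If $j(i)\ne i$ and $c_i=(u_1\dots u_k)$, then $c_i^{-1}=(u_k\dots u_1)$, and a direct computation gives $c_{j(i)}=\tau c_i^{-1}\tau^{-1}=(\tau(u_k)\dots\tau(u_1))$, which is the first case of the lemma (with $j=j(i)$). Note that in this case the two cycles have disjoint supports, which is automatic since they are distinct independent cycles, and also $\name{supp}(c_{j(i)})=\tau(\name{supp}(c_i))$.

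The main work is in the fixed-point case $j(i)=i$, i.e.\ $\tau c_i\tau^{-1}=c_i^{-1}$. Here the support $S\bydef\name{supp}(c_i)$ is $\tau$-invariant, and since $\tau$ is fixed-point-free the orbits on $S$ all have size $2$, so $|S|=2k$ is even. Writing $c_i=(u_1\dots u_{2k})$, the relation $\tau c_i(u_\ell)=c_i^{-1}\tau(u_\ell)$ means that if $\tau(u_\ell)=u_p$, then $\tau(u_{\ell+1})=u_{p-1}$ (all indices mod $2k$). Hence there is a constant $p\in\Integer/2k\Integer$ with $\tau(u_\ell)=u_{p-\ell+1}$ for all $\ell$; the requirement that $\tau$ be fixed-point-free on $S$ forces $p$ to be even, say $p=2q$. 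Now rotate the cyclic notation by setting $v_j\bydef u_{q+j}$ for $j=1,\dots,k$: a short check using $\tau(u_\ell)=u_{2q-\ell+1}$ shows $\tau(v_j)=u_{q-j+1}=u_{q+2k-j+1}$, i.e.\ $\tau(v_j)$ occupies position $2k+1-j$ of the rotated cycle. Therefore
\begin{equation*}
c_i=(v_1\,v_2\,\dots\,v_k\,\tau(v_k)\,\dots\,\tau(v_1)),
\end{equation*}
which is the second case of the lemma. The only non-routine step is pinning down the parity of $p$ and choosing the correct rotation of the cycle so that the $\tau$-pairing matches the palindromic layout; everything else is bookkeeping with the cycle decomposition.
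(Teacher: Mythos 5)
Your proof is correct and follows essentially the same route as the paper: rewrite $\tau\sigma=\sigma^{-1}\tau$ as $\tau\sigma\tau^{-1}=\sigma^{-1}$ and match the cycles $\tau c_i\tau^{-1}$ with the $c_j^{-1}$ via uniqueness of the cycle decomposition, splitting into the cases $j\ne i$ and $j=i$. The only difference is that you spell out the self-paired case (evenness of the length via fixed-point-freeness of $\tau$, the index relation $\ell\mapsto p-\ell+1$, the parity of $p$, and the rotation giving the palindromic form), which the paper asserts without detail; your verification of that step is accurate.
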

		
		In the first case we say that the cycles $c_i$ and $c_j$ are
		$\tau$-symmetric, and in the second case the cycle $c_i$ is
		$\tau$-self-symmetric.
		
		\begin{proof}
			Let $c_i = (u_1^{(i)} \dots u_{k_i}^{(i)})$ for all $i = 1 \DT,
			m$. Then $\tau \sigma \tau^{-1} = c_1' \dots c_m'$ where $c_i' =
			(\tau(u_1^{(i)}) \dots \tau(u_{k_i}^{(i)}))$. On the other side,
			$\sigma^{-1} = c_1'' \dots c_m''$ where $c_i'' = (u_{k_i}^{(i)}
			\dots u_1^{(i)})$. Once a cycle decomposition is unique, every
			$c_i''$ must be equal to some $c_j'$. If $j \ne i$ then $c_i$ and
			$c_j$ are $\tau$-symmetric, and if $j = i$ then $c_i$ is
			$\tau$-self-symmetric. 
		\end{proof}
		
		\begin{theorem}\label{Th:Equiv}
			There exists a one-to-one correspondence between the following three
			sets:
			\begin{enumerate}
				\item\label{It:Coset} The quotient (the set of left cosets)
				$S_{2n}/B_n$ where we assume $B_n = C(\tau)$;
				
				\item\label{It:AlphaCycl} The set $B_n^{\sim}$ of permutations
				$\sigma \in C^{\sim}(\tau)$ such that their cycle decomposition
				contains no $\tau$-self-symmetric cycles.
				
				\item\label{It:Invol} The set of fixed-point-free involutions
				$\lambda \in S_{2n}$.
			\end{enumerate}
			
			The size of each set is $(2n-1)!! = 1 \times 3 \DT\times (2n-1)$.
		\end{theorem}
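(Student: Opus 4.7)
The plan is to construct two bijections---one between the cosets (\ref{It:Coset}) and the involutions (\ref{It:Invol}), and one between $B_n^{\sim}$ of (\ref{It:AlphaCycl}) and the involutions (\ref{It:Invol}); composing them then yields (\ref{It:Coset})~$\leftrightarrow$~(\ref{It:AlphaCycl}), and the common size drops out.

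For (\ref{It:Coset})~$\leftrightarrow$~(\ref{It:Invol}) I would use conjugation: send $\sigma B_n$ to $\sigma\tau\sigma^{-1}$. This is well defined and injective on cosets because $B_n = C(\tau)$ is by definition the $S_{2n}$-stabilizer of $\tau$ under conjugation, and it is surjective because every fixed-point-free involution in $S_{2n}$ has cycle type $2^n$ and is therefore conjugate to $\tau$. Fixed-point-free involutions in $S_{2n}$ correspond to perfect matchings of $\{1,\dots,2n\}$, so their number is $(2n-1)!!$; equivalently $|S_{2n}/B_n| = (2n)!/(2^n n!) = (2n-1)!!$ using the wreath-product description $B_n \cong S_n \ltimes (\Integer/2\Integer)^n$.

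For (\ref{It:AlphaCycl})~$\leftrightarrow$~(\ref{It:Invol}) I would try $\sigma \mapsto \sigma\tau$. The defining relation $\tau\sigma\tau = \sigma^{-1}$ of $C^{\sim}(\tau)$ gives $(\sigma\tau)^2 = \sigma(\tau\sigma\tau) = \sigma\sigma^{-1} = e$, so $\sigma\tau$ is an involution; the same identity shows that $\lambda \mapsto \lambda\tau$ is a well-defined inverse landing in $C^{\sim}(\tau)$. What remains is to verify that $\sigma\tau$ is fixed-point-free exactly when $\sigma$ has no $\tau$-self-symmetric cycle. Using the normal forms from the preceding lemma, a direct cycle-by-cycle computation shows that a $\tau$-symmetric pair $(u_1\dots u_k)(\tau(u_k)\dots\tau(u_1))$ contributes no fixed point of $\sigma\tau$ (disjointness of the two cycles prevents $\sigma\tau(u_\ell)=u_\ell$ or $\sigma\tau(\tau(u_\ell))=\tau(u_\ell)$), whereas a $\tau$-self-symmetric cycle $(u_1\dots u_k\tau(u_k)\dots\tau(u_1))$ contributes exactly the two fixed points $u_1$ and $\tau(u_k)$.

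The only real subtlety is this last cycle-by-cycle count: it is short but needs strict attention to the direction of cycle notation (so as not to double-count or miss the endpoints $u_1$ and $\tau(u_k)$). Everything else is a formal consequence of the orbit--stabilizer theorem and the defining relations of $C(\tau)$ and $C^{\sim}(\tau)$.
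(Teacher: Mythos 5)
Your proposal is correct and follows essentially the same route as the paper: the paper's map from cosets to $B_n^{\sim}$ is exactly the composite $\sigma B_n \mapsto \sigma\tau\sigma^{-1} \mapsto \sigma\tau\sigma^{-1}\tau$ of your two bijections, and your multiplication-by-$\tau$ correspondence with fixed-point-free involutions, including the observation that $\tau$-self-symmetric cycles are precisely what create fixed points of $\sigma\tau$, matches the paper's argument. The differences are organizational only: the paper arranges the proof as a cycle of three injections and rules out self-symmetric cycles by noting that $\tau\,\sigma\tau\sigma^{-1}\tau$ is conjugate to $\tau$, whereas you verify the fixed-point criterion directly and also make the $(2n-1)!!$ count explicit.
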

		
		\begin{proof}
			To prove the theorem we will construct injective maps \ref{It:Coset}
			$\to$ \ref{It:AlphaCycl} $\to$ \ref{It:Invol} $\to$ \ref{It:Coset}.
			
			\ref{It:Coset} $\to$ \ref{It:AlphaCycl}: let $\sigma \in S_{2n}$ be
			an element of the coset $\lambda \in S_{2n}/B_n$; take $Q(\lambda)
			\bydef [\sigma,\tau] \bydef \sigma \tau \sigma^{-1} \tau$. Since
			$\tau$ is an involution, one has $\tau Q(\lambda) \tau = \tau \sigma
			\tau \sigma^{-1} = Q(\lambda)^{-1}$, so $Q(\lambda) \in
			C^{\sim}(\tau)$. If $\sigma' \in \lambda$ is another element of the
			coset then $\sigma' = \sigma \rho$ where $\rho \tau = \tau \rho$ and
			therefore $[\sigma',\tau] = \sigma \rho \tau \rho^{-1} \sigma^{-1}
			\tau = \sigma \tau \rho \rho^{-1} \sigma^{-1} \tau = Q(\lambda)$, so
			the map $Q: S_{2n}/B_n \to C^{\sim}(\tau)$ is well-defined. If
			$Q(\lambda) = Q(\lambda')$ where $\lambda, \lambda' \in S_{2n}/B_n$
			are represented by $\sigma$ and $\sigma'$, respectively, then
			$\sigma \tau \sigma^{-1} \tau = \sigma' \tau (\sigma')^{-1} \tau$,
			which is equivalent to $(\sigma')^{-1} \sigma \tau = \tau
			(\sigma')^{-1} \sigma$. So $(\sigma')^{-1} \sigma \in B_n$, and
			$\lambda = \lambda'$, so $Q$ is injective.
			
			Prove that actually $Q(\lambda) \in B_n^{\sim} \subset
			C^{\sim}(\tau)$. Suppose it is not the case, that is, $Q(\lambda)$
			contains a $\tau$-self-symmetric cycle $c = (u_1 \dots u_k \tau(u_k)
			\dots \tau(u_1))$. It implies that $\tau Q(\lambda)$ has a fixed
			point $u = u_k$. On the other hand, $\tau Q(\lambda) = (\tau \sigma)
			\tau (\tau \sigma)^{-1}$ is conjugate to $\tau$ and is therefore a
			product of $n$ independent transpositions having no fixed points ---
			a contradiction.
			
			\ref{It:AlphaCycl} $\to$ \ref{It:Invol}: the condition $\tau
			\sigma \tau^{-1} = \sigma^{-1}$ is equivalent to $(\sigma\tau)^2 =
			\name{id}$. If $\sigma = c_1 c_2 \dots c_k$ then $\sigma\tau$ sends
			every element of the cycle $c_i$ to an element of its
			$\tau$-symmetric cycle $c_j$. So if $j \ne i$ for all $i$ then the
			involution $\sigma\tau$ has no fixed points. The map $\sigma \mapsto
			\sigma \tau$ is obviously injective.
			
			\ref{It:Invol} $\to$ \ref{It:Coset}: if $\psi$ is a
			fixed-point-free involution then its cycle decomposition is a
			product of $n$ independent transpositions, and therefore $\psi$
			belongs to the same conjugacy class in $S_{2n}$ as $\tau$: $\psi
			= \sigma \tau \sigma^{-1}$ for some $\sigma \in S_{2n}$. Denote by
			$R(\psi) \in S_{2n}/B_n$ the left coset containing $\sigma$. The
			equality $\sigma_1 \tau \sigma_1^{-1} = \sigma_2 \tau \sigma_2^{-1}$
			is equivalent to $(\sigma_1 \sigma_2^{-1}) \tau = \tau (\sigma_1
			\sigma_2^{-1})$, that is, $\sigma_1 \sigma_2^{-1} \in B_n$. So the
			left cosets containing $\sigma_1$ and $\sigma_2$ are the same and
			$R(\psi) \in S_{2n}/B_n$ is well-defined. If $R(\psi_1) =
			R(\psi_2)$ where $\psi_i = \sigma_i \tau \sigma_i^{-1}$, $i =
			1, 2$, then $\sigma_1 \sigma_2^{-1} \in B_n$ and therefore
			$\psi_1 = \psi_2$; thus, $R$ is an injective map.  
		\end{proof}
		
		Fix a partition $\lambda=(\lambda_1 \DT, \lambda_s)$, $\lmod
		\lambda\rmod = n$, and denote by $B_\lambda^{\sim} \subset B_n^{\sim}$
		the set of permutations whose decomposition into independent cycles
		consists of $s$ pairs of $\tau$-symmetric cycles of lengths
		$\lambda_1 \DT, \lambda_s$. Apparently, $B_n^{\sim} =
		\bigsqcup_{\lmod\lambda\rmod = n} B_\lambda^{\sim}$.
		
		\begin{proposition}\label{Pp:ConjCl}
			$B_\lambda^{\sim}$ is a $B_n$-conjugacy class in $S_{2n}$.
		\end{proposition}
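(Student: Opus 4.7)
The plan is to show that $B_\lambda^{\sim}$ is both closed under, and a single orbit of, the action of $B_n=C(\tau)$ on $S_{2n}$ by conjugation.

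Invariance is a quick check. If $g\in C(\tau)$ and $\sigma\in C^{\sim}(\tau)$, then $g\tau=\tau g$ gives $\tau(g\sigma g^{-1})\tau^{-1}=g(\tau\sigma\tau^{-1})g^{-1}=g\sigma^{-1}g^{-1}=(g\sigma g^{-1})^{-1}$, so $C^{\sim}(\tau)$ is preserved. Conjugation preserves the multiset of cycle lengths, and because $g$ commutes with $\tau$, the $g$-image of a $\tau$-symmetric pair of cycles is again a $\tau$-symmetric pair, while a $\tau$-self-symmetric cycle would conjugate to a $\tau$-self-symmetric cycle. Hence the absence of self-symmetric cycles is preserved, and $g\sigma g^{-1}\in B_\lambda^{\sim}$.

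For transitivity, take $\sigma,\sigma'\in B_\lambda^{\sim}$ and match their $\tau$-symmetric pairs of cycles by length, obtaining decompositions $\sigma=c_1 c_1^{*}\dots c_s c_s^{*}$ and $\sigma'=d_1 d_1^{*}\dots d_s d_s^{*}$ with $c_i^{*}=\tau c_i\tau^{-1}$, $d_i^{*}=\tau d_i\tau^{-1}$, and $|c_i|=|d_i|=\lambda_i$. Write $c_i=(u_1^{(i)}\,\dots\, u_{\lambda_i}^{(i)})$ and $d_i=(v_1^{(i)}\,\dots\, v_{\lambda_i}^{(i)})$. Define $g\colon\{1,\dots,2n\}\to\{1,\dots,2n\}$ by $g(u_j^{(i)}):=v_j^{(i)}$ on the support of each $c_i$, and extend to $\mathrm{supp}(c_i^{*})=\tau(\mathrm{supp}(c_i))$ by $g(\tau(x)):=\tau(g(x))$. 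By construction $g$ commutes with $\tau$, hence $g\in B_n$, and a direct cycle-by-cycle check gives $g\sigma g^{-1}=\sigma'$.

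The one point requiring care is that $g$ is well-defined and bijective. This is precisely where the hypothesis $\sigma,\sigma'\in B_\lambda^{\sim}$ (no $\tau$-self-symmetric cycles) is essential: distinct cycles of a permutation have disjoint supports, so $\mathrm{supp}(c_i)\cap\mathrm{supp}(c_i^{*})=\emptyset$ (and likewise for the $d_i$), guaranteeing that the two halves of the definition of $g$ do not collide on any pair. Summed over $i$ this yields a permutation of $\{1,\dots,2n\}$, completing the proof.
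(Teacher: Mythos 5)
Your proof is correct and follows essentially the same route as the paper: invariance under $B_n$-conjugation by a direct computation using $g\tau=\tau g$, and transitivity by constructing the conjugating element cycle-by-cycle via $g(u_j^{(i)})=v_j^{(i)}$ extended $\tau$-equivariantly, which is exactly the paper's construction. The only nitpick is the pairing relation: for $\sigma\in C^{\sim}(\tau)$ the cycle paired with $c_i$ is $\tau c_i^{-1}\tau$ rather than $\tau c_i\tau^{-1}$, but this does not affect your argument, which only uses that the paired cycle has support $\tau(\mathrm{supp}(c_i))$ and that $g$ commutes with $\tau$.
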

		\begin{proof}
			Let $\sigma = c_1 c_1' \dots c_s c_s' \in B_n^{\sim}$ be the
			cycle decomposition where $c_i$ and $c_i'$ are $\tau$-symmetric for
			all $i$: $c_i' = \tau c_i^{-1} \tau$. Let $x \in B_n$, that is,
			$x\tau = \tau x$. Then $x \sigma x^{-1} = xc_1 x^{-1} \cdot x \tau
			c_1^{-1} \tau x^{-1} \DT\cdot xc_s x^{-1} \cdot x \tau
			c_s^{-1} \tau x^{-1}$. The permutations $\tilde c_i \bydef x c_i
			x^{-1}$ and $\tilde c_i' = x c_i' x^{-1}$ are cycles of length
			$\lambda_i$, and they are $\tau$-symmetric: $\tau \tilde c_i \tau =
			\tau x c_i x^{-1} \tau = x \tau c_i \tau x^{-1} = x (c_i')^{-1}
			x^{-1} = (\tilde c_i')^{-1}$. Thus, $x \sigma x^{-1} \in
			B_\lambda^{\sim}$.
			
			On the other side, let $\tilde\sigma = \tilde c_1 \tilde c_1' \dots
			\tilde c_s \tilde c_s' \in B_\lambda^{\sim}$. Let $\tilde c_i
			= (v_1^{(i)} \dots v_{\lambda_i}^{(i)})$, so $\tilde c_i' =
			(\tau(v_{\lambda_i}^{(i)}) \dots \tau(v_1^{(i)}))$. Define an element
			$x \in S_{2n}$ such that $x(u_s^{(i)}) = v_s^{(i)}$ and
			$x(\tau(u_s^{(i)})) = \tau(v_s^{(i)})$. Then $x \sigma x^{-1} =
			\tilde \sigma$ and $x\tau = \tau x$ (that is, $x \in B_n$).
		\end{proof}
		
		Now denote by $\mathfrak S_{m,\lambda}$ the set of decompositions into
		$m$ ribbons of the surfaces $M \in \DBS_n$ such that $\dM$ has $s$
		components containing $\lambda_1 \DT, \lambda_s$ marked points.
		
		Let $\mathcal G \in \mathfrak S_{m,\lambda}$ be a ribbon decomposition
		of $M \in \DBS_n$. Denote by $\wM \in \DBS_{2n}$ the orientation cover
		of $M$ with a ribbon decomposition given by \eqref{Eq:2Cover}. Now
		define $\Xi(\mathcal G) \bydef (\sigma_1 \DT, \sigma_m)$ where each
		$\sigma_k \bydef (i_k^{\eps_k},j_k^{\delta_k}) \in S_{2n}$ is a
		transposition; here we are using the notation of Theorem
		\ref{Th:OrientViaGraph}. Denote
		
		\begin{align*}
			\mathfrak H_{m,\lambda} &\bydef \{(\sigma_1 \DT,\sigma_m) \mid \forall s
			= 1 \DT, m \,\sigma_s = (i_s j_s), j_s \ne \tau(i_s),\\
			&\sigma_1\sigma_2 \dots \sigma_m (\tau\sigma_m\tau) \dots
			(\tau\sigma_1\tau) \in B_\lambda^{\sim}\}.
		\end{align*}
		
		\begin{theorem}\label{Th:RibbonToPermut}
			For any $\lambda$ and $m$ the map $\Xi$ is a one-to-one
			correspondence between $\mathfrak S_{m,\lambda}$ and $\mathfrak
			H_{m,\lambda}$. 
		\end{theorem}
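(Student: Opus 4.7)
The plan is to split the claim into two parts: that $\Xi$ maps $\mathfrak S_{m,\lambda}$ into $\mathfrak H_{m,\lambda}$, and that $\Xi$ is a bijection onto its image. Both hinge on applying Theorems~\ref{Th:PropOrient} and~\ref{Th:OrientViaGraph} to the orientation cover $\wM$.

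To show $\Xi(\mathcal G) \in \mathfrak H_{m,\lambda}$, I first observe that each $\sigma_k = (i_k^{\eps_k}, j_k^{\delta_k})$ is a transposition with $j_k^{\delta_k} \ne \tau(i_k^{\eps_k})$: since $\tau(\ell^\pm) = \ell^\mp$, the equality would force $i_k = j_k$, contradicting the fact that the $k$-th ribbon joins distinct marked points. For the main condition, I apply Theorem~\ref{Th:PropOrient}(\ref{It:BoundPerm}) to the oriented ribbon decomposition \eqref{Eq:2Cover} of $\wM$: listing the $2m$ ribbons in reverse gluing order and multiplying the associated transpositions yields the boundary permutation of $\wM$. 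A direct index match identifies this product as an $S_{2n}$-conjugate of $u \bydef \sigma_1 \cdots \sigma_m (\tau\sigma_m\tau) \cdots (\tau\sigma_1\tau) = A \tau A^{-1} \tau$, where $A \bydef \sigma_1 \cdots \sigma_m$. Since $u$ is a commutator of this form, the argument in the proof of Theorem~\ref{Th:Equiv} shows that $u \in B_n^{\sim}$. To pin down its cycle type I invoke Lemma~\ref{Lm:OrientCov}: the orientation cover is trivial over $\partial M$, so $\wM$ has exactly $2s$ boundary components paired by $\T$, each pair consisting of two copies of a boundary component of $M$. Hence the boundary permutation of $\wM$ has cycle type $(\lambda_1, \lambda_1 \DT, \lambda_s, \lambda_s)$, and so does $u$ by $S_{2n}$-conjugacy. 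Combined with $u \in B_n^{\sim}$, this gives $u \in B_\lambda^{\sim}$, as required.

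For bijectivity I construct the inverse explicitly. Writing $\sigma_k = (a_k, b_k) \in S_{2n}$, the assignment $a_k = i_k^{\eps_k}$, $b_k = j_k^{\delta_k}$ determines a unique quadruple $(i_k, j_k, \eps_k, \delta_k)$ modulo the intrinsic symmetry $G[i,j]^{\eps,\delta} = G[j,i]^{\delta,\eps}$; the hypothesis $j_k^{\delta_k} \ne \tau(i_k^{\eps_k})$ translates to $i_k \ne j_k$, guaranteeing a legitimate ribbon gluing. Injectivity of $\Xi$ is immediate, since the ribbon decomposition is recovered verbatim from the quadruples. For surjectivity, apply these gluings in succession to $E_n$ to produce a DBS $M$ with a ribbon decomposition $\mathcal G$ satisfying $\Xi(\mathcal G) = (\sigma_1 \DT, \sigma_m)$ by construction. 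Running the previous paragraph in reverse then shows $\mathcal G \in \mathfrak S_{m,\lambda}$: the assumption $u \in B_\lambda^{\sim}$ fixes the cycle type of the boundary permutation of $\wM$, and triviality of the cover over $\partial M$ collapses the $\T$-paired cycles of $\wM$ into $s$ boundary components of $M$ carrying $\lambda_1 \DT, \lambda_s$ marked points.

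The main technical obstacle is keeping the orderings straight: Theorem~\ref{Th:PropOrient} outputs transpositions in the reverse of their gluing order, so matching the word read off \eqref{Eq:2Cover} with the specific word $u$ requires careful index bookkeeping. The resulting boundary-permutation word differs from $u$ by an $S_{2n}$-conjugation that a priori is not by an element of $B_n$; nevertheless both elements lie in $B_n^{\sim}$ by the $Q$-construction of Theorem~\ref{Th:Equiv}, and they share the same cycle type, so their membership in $B_\lambda^{\sim}$ is governed by the same topological datum, namely the boundary profile of $M$.
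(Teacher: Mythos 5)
Your proposal is correct and takes essentially the same route as the paper: apply Theorem~\ref{Th:OrientViaGraph} and property~(\ref{It:BoundPerm}) of Theorem~\ref{Th:PropOrient} to read off the boundary permutation of the orientation cover $\wM$ from the decomposition \eqref{Eq:2Cover}, identify its cycle type with the doubled partition, and conclude membership in $B_\lambda^{\sim}$, with bijectivity coming from the evident reconstruction of the gluing data from the transpositions. You merely spell out what the paper compresses --- the absence of $\tau$-self-symmetric cycles via the commutator form as in Theorem~\ref{Th:Equiv}, the cycle-type count via Lemma~\ref{Lm:OrientCov}, the explicit inverse --- and your cautious observation that the word read off \eqref{Eq:2Cover} is a priori only $S_{2n}$-conjugate to $u$ (with the two elements sharing cycle type and both lying in $B_n^{\sim}$) is a harmless and in fact careful refinement of the paper's direct identification.
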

		
		\begin{proof}
			Let $\mathcal G \in \mathfrak S_{m,\lambda}$ be a ribbon
			decomposition of $M \in \DBS_n$. By Theorem \ref{Th:OrientViaGraph}
			the diagonal of the $k$-th ribbon in the ribbon decomposition
			\eqref{Eq:2Cover} of $\wM$ joins the marked points numbered
			$i_k^{\eps_k}$ and $j_k^{\delta_k}$ if $1 \le k \le m$ and the
			points numbered $i_{k-m}^{-\eps_{k-m}}$ and
			$j_{k-m}^{-\delta_{k-m}}$ if $m+1 \le k \le 2m$. Then by Property
			\ref{It:BoundPerm} of Theorem \ref{Th:PropOrient} the boundary
			permutation of $\wM$ is
			\begin{equation*}
				\sigma = \sigma_1\sigma_2 \dots \sigma_m (\tau\sigma_m\tau) \dots
				(\tau\sigma_1\tau);
			\end{equation*}
			it has the cyclic type $(\lambda_1, \lambda_1 \DT, \lambda_s,
			\lambda_s)$ by the definition of $\mathfrak S_{m,\lambda}$. On the
			other hand, $\tau \sigma \tau = (\tau \sigma_1 \tau) \dots (\tau
			\sigma_m \tau) \sigma_m \dots \sigma_1 = \sigma^{-1}$ because
			$\sigma_k$ and $\tau \sigma_k \tau$ are involutions for all
			$k$. Thus, $\sigma \in B_n^{\sim}$, hence $\sigma \in
			B_\lambda^{\sim}$, and so $\Xi(\mathcal G) \in \mathfrak
			H_{m,\lambda}$.
			
			The map $\Xi$ is obviously one-to-one. 
		\end{proof}
		
		\begin{corollary}
			The boundary permutation of the orientation cover $\wM \in \DBS_{2n}$
			of any $M \in \DBS_n$ belongs to $B_n^{\sim}$.
		\end{corollary}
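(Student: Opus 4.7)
The plan is to invoke Theorem~\ref{Th:RibbonToPermut} directly. Given an arbitrary $M \in \DBS_n$, I would first reduce to the stable case. Any non-stable component of $M$ is a disk with a single marked point $a_i$; its preimage in $\wM$ is a pair of disjoint disks carrying the marked points $b_i$ and $b_{\tau(i)}$, and these contribute the $\tau$-symmetric pair $(b_i)(b_{\tau(i)})$ of length-one cycles to the boundary permutation of $\wM$. Such a pair manifestly lies in $B_n^{\sim}$, so the problem reduces to stable $M$.

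For stable $M$, Proposition~\ref{EulerChar} supplies at least one ribbon decomposition $\mathcal G \in \mathfrak S_{m,\lambda}$, where $\lambda$ records the numbers of marked points on the boundary components of $M$. The computation performed inside the proof of Theorem~\ref{Th:RibbonToPermut}---built on Property~\ref{It:BoundPerm} of Theorem~\ref{Th:PropOrient} applied to the oriented decomposition~\eqref{Eq:2Cover} of $\wM$---shows that the boundary permutation of $\wM$ equals
\[
\sigma = \sigma_1 \dots \sigma_m (\tau\sigma_m\tau) \dots (\tau\sigma_1\tau),
\]
with $\sigma_k = (i_k^{\eps_k}, j_k^{\delta_k})$, and that $\sigma \in B_\lambda^{\sim} \subset B_n^{\sim}$. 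Since the boundary permutation of an oriented DBS depends only on $\wM$ and not on any particular decomposition chosen to exhibit it, this proves the assertion for stable $M$, and combined with the observation above it proves it for all $M \in \DBS_n$.

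No genuine obstacle remains. The defining relation $\tau\sigma\tau = \sigma^{-1}$ has already been obtained in the theorem from the fact that each $\sigma_k$ (and each $\tau\sigma_k\tau$) is an involution, and the absence of $\tau$-self-symmetric cycles in $\sigma$ follows from its cycle type being $(\lambda_1,\lambda_1,\dots,\lambda_s,\lambda_s)$ with the cycles paired by $\tau$-symmetry, exactly as recorded in the theorem. In short, the corollary is the ``for every $M$'' restatement of what was already proved ``for every ribbon decomposition'' inside Theorem~\ref{Th:RibbonToPermut}, together with Proposition~\ref{EulerChar} to guarantee that such a decomposition always exists.
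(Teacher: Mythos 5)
Your argument for stable $M$ is exactly the paper's: the corollary is read off from the computation inside Theorem~\ref{Th:RibbonToPermut} (via Property~\ref{It:BoundPerm} of Theorem~\ref{Th:PropOrient} applied to the decomposition~\eqref{Eq:2Cover}), and Proposition~\ref{EulerChar} guarantees a decomposition exists. The gap is in your reduction step: you assert that ``any non-stable component of $M$ is a disk with a single marked point,'' but this is backwards. By the paper's definition, a disk with one marked point is precisely the kind of single-marked-point component that \emph{is} allowed in a stable DBS; the non-stable components are the ones with a single marked point that are \emph{not} disks (a Moebius band with one marked point, a one-holed torus with one marked point, etc.). For such a component Proposition~\ref{EulerChar} provides no ribbon decomposition at all, so Theorem~\ref{Th:RibbonToPermut} says nothing about it, and your argument simply does not cover the case that the ``any $M \in \DBS_n$'' in the statement forces you to address.

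The missing case can be handled without ribbon decompositions: by Lemma~\ref{Lm:OrientCov} the orientation cover is trivial over $\partial M$, so no boundary circle of $\wM$ is $\T$-invariant; the boundary components of $\wM$ therefore come in pairs $C'$, $\T(C')$. Since $\T$ reverses orientation and sends $b_k$ to $b_{\tau(k)}$, the cycle of the boundary permutation read off $\T(C')$ is $\tau c^{-1} \tau$ where $c$ is the cycle read off $C'$. Hence the cycles of the boundary permutation split into $\tau$-symmetric pairs with no $\tau$-self-symmetric cycle, i.e.\ the permutation lies in $B_n^{\sim}$, for an arbitrary (not necessarily stable) $M$. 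Either add this argument, or at least replace your false characterization of non-stable components; as written, the reduction to the stable case fails exactly where it is needed.
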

		
		\begin{corollary}
			If the twisted Hurwitz number $h_{m,\lambda}^{\sim}$ is defined by
			equation \eqref{Eq:DefHurw} then equality \eqref{Eq:HurwAlgebr}
			takes place.
		\end{corollary}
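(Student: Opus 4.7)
The plan is to observe that this corollary is an immediate consequence of combining the defining equation \eqref{Eq:DefHurw} with the bijection constructed in Theorem \ref{Th:RibbonToPermut}. By definition, $h_{m,\lambda}^{\sim} = \frac{1}{n!} \#\mathfrak S_{m,\lambda}$, where $\mathfrak S_{m,\lambda}$ is the set of ribbon decompositions of surfaces with $m$ ribbons and boundary components of sizes $\lambda_1, \dots, \lambda_s$.

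First I would invoke Theorem \ref{Th:RibbonToPermut}, which asserts that the map $\Xi: \mathfrak S_{m,\lambda} \to \mathfrak H_{m,\lambda}$ sending a ribbon decomposition $\mathcal G$ to the tuple $(\sigma_1, \dots, \sigma_m)$ of transpositions $\sigma_k = (i_k^{\eps_k}, j_k^{\delta_k})$ is a one-to-one correspondence. Since bijections preserve cardinality, this gives $\#\mathfrak S_{m,\lambda} = \#\mathfrak H_{m,\lambda}$.

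Second, I would check that the set $\mathfrak H_{m,\lambda}$ as defined just above Theorem \ref{Th:RibbonToPermut} coincides with the set described in the introduction for formula \eqref{Eq:HurwAlgebr}: both conditions amount to requiring that $u = \sigma_1 \dots \sigma_m \tau \sigma_m \dots \sigma_1 \tau$ lie in $B_\lambda^{\sim}$, i.e.\ that $u$ decomposes as $c_1 c_1' \dots c_s c_s'$ with $c_i' = \tau c_i \tau$ and the cycle lengths being $\lambda_1, \dots, \lambda_s$. Dividing the cardinality equality by $n!$ then yields $\frac{1}{n!}\#\mathfrak S_{m,\lambda} = \frac{1}{n!}\#\mathfrak H_{m,\lambda}$, which is exactly \eqref{Eq:HurwAlgebr}.

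There is no real obstacle here, as all substantive work has been absorbed into Theorem \ref{Th:RibbonToPermut}; the only task is to read off the numerical consequence and verify that the notational conventions in the two occurrences of $\mathfrak H_{m,\lambda}$ (introduction versus Section \ref{Sec:Algebra}) match.
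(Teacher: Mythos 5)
Your proposal is correct and matches the paper's (implicit) argument: the corollary is stated without proof precisely because it follows at once from the bijection $\Xi$ of Theorem \ref{Th:RibbonToPermut} together with the definition \eqref{Eq:DefHurw}, exactly as you spell out. Checking that the two descriptions of $\mathfrak H_{m,\lambda}$ agree is the only point of care, and you handle it correctly.
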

		
		\begin{example}\label{Ex:HNumbers}
			For $m=1$, any $n = \lmod\lambda\rmod$ and any transposition $\sigma
			\ne (i,i+n)$ the permutation $\mu = \sigma \tau \sigma \tau$ belongs
			to $B_{n,2^1 1^{n-2}}$. Now $\#\mathfrak H_{1,2^11^{n-2}}$ is the
			total number of all transpositions $\sigma \in S_{2n}$ except
			$(i,i+n)$, which is $\frac12 (2n)(2n-1) - n = 2n(n-1)$. So,
			$h_{1,2^11^{n-2}} = \frac{2n(n-1)}{n!} = \frac{2}{(n-2)!}$ and
			$h_{1,\lambda} = 0$ for all other $\lambda$.
			
			Let $m = 2$, $n = \lmod\lambda\rmod = 2$; here $\tau = (13)(24) \in
			S_4$. The set $B_2^{\sim} \subset B_2 = C(\tau) \subset S_4$ is a
			union of two conjugacy classes, $B_{[2]}^{\sim} =
			\{(12)(34),(14)(23)\}$ and $B_{[1,1]}^{\sim} = \{e\}$.
			
			Consider the permutation $\mu \bydef
			\sigma_1\sigma_2\tau\sigma_2\sigma_1\tau$ where $\sigma_1, \sigma_2
			\in \{(12), (14), (23), (34)\}$; totally, there are $16$ of them. It
			is easy to see that $\mu = e \in B_{[1,1]}^{\sim}$ if and only if
			$\sigma_2 = \sigma_1$ or $\sigma_2 = \tau \sigma_1 \tau$; the
			remaining $8$ pairs of transpositions $(\sigma_1, \sigma_2)$ give
			$\mu \in B_{[2]}^{\sim}$. This gives $h_{2,[1,1]}^{\sim} =
			h_{2,[2]}^{\sim} = \frac{8}{2!}=4$.
			
			For $n=3$, $m=2$ the calculations (in $S_6$) are similar but more
			cumbersome, giving eventually $h_{2,[1,1,1]}^{\sim} =
			h_{2,[2,1]}^{\sim} = 4$ and $h_{2,[3]}^{\sim} = 16$.
		\end{example}
		
		\subsection{Twisted cut-and-join operator}
		
		Now denote
		\begin{equation}\label{Eq:ClassSum}
			\CSum{\lambda} \bydef \sum_{\sigma\in B_\lambda^{\sim}}\sigma \in
			\Complex[B_n^{\sim}].
		\end{equation}
		(a conjugacy class sum). Also, call the set
		\begin{equation*}
			\mathcal{Z}(B_n^{\sim}) \bydef \{y \in \Complex[B_n^{\sim}] \mid x y x^{-1} =
			y\, \forall x \in B_n\}
		\end{equation*}
		a {\em twisted center} of $B_n$. It is clear that $\CSum{\lambda}$
		belong to $\mathcal{Z}(B_n^{\sim})$ and form a basis in it.
		
		Let $\Complex[p]$ be a space of polynomials of the countable set of
		variables $p = (p_1, p_2, \dots)$. Assume $\deg p_k = k$ for all $k$
		and denote by $\Complex[p]_n$ the space of homogeneous polynomials of
		degree $n$. A linear map $\Psi: \mathcal{Z}(B_n^{\sim}) \to
		\Complex[p]_n$ defined by
		\begin{equation}\label{Eq:Center}
			\Psi(\CSum{\lambda}) = p_\lambda \bydef p_{\lambda_1} \dots
			p_{\lambda_s}
		\end{equation}
		is obviously an isomorphism of vector spaces.
		
		Define an operator $\CJfrak: \mathcal{Z}(B_n^{\sim}) \to
		\mathcal{Z}(B_n^{\sim})$ by
		\begin{equation*}
			\CJfrak(\sigma) = \sum_{\substack{1 \le i < j \le 2n\\j \ne
					\tau(i)}} (ij) \sigma (\tau(i)\tau(j))
		\end{equation*}
		
		\begin{definition}\label{Df:CJ}
			The {\em twisted cut-and-join operator} is a linear map $\CJ:
			\Complex[p]_n \to \Complex[p]_n$ making the following diagram
			commutative:
			\begin{equation}\label{Eq:CJViaGAlg}
				\xymatrix{
					\mathcal{Z}[B_n^{\sim}] \ar[r]^{\CJfrak} \ar[d]^\Psi &
					\mathcal{Z}[B_n^{\sim}] \ar[d]^\Psi \\
					\Complex[p]_n \ar[r]_{\CJ} & \Complex[p]_n}
			\end{equation}
		\end{definition}
		
		Let $\lambda, \mu$ be partitions such that $\lmod\lambda\rmod =
		\lmod\mu\rmod = n$. Take an element $\sigma \in B_\lambda^{\sim}$ and
		consider a set
		\begin{equation*}
			S(\sigma;\mu) \bydef \{(i,j) \mid \le i, j \le 2n, j \ne i,
			\tau(i), (ij)\sigma_* (\tau(i)\tau(j)) \in B_{\mu}^{\sim}\}.
		\end{equation*}
		Proposition \ref{Pp:ConjCl} implies that for every $x \in B_n$ and
		$\sigma \in B_\lambda^{\sim}$ the map $(i,j) \mapsto (x(i),x(j))$ is a
		bijection between $S(x \sigma x^{-1},\mu)$ and
		$S(\sigma,\mu)$. So, the size of the set $S(\sigma,\mu)$ for
		$\sigma \in B_\lambda^{\sim}$ depends on $\lambda$ and $\mu$
		only.
		
		We will be using ``physical'' notation for matrix elements of a linear
		operator: $\CJfrak(\CSum{\lambda}) = \sum_{\mu} \langle \lambda
		\mid \CJfrak \mid \mu\rangle \CSum{\mu}$
		
		\begin{theorem}\label{Th:TViaMult}
			$\langle \lambda \mid \CJfrak \mid \mu\rangle = \frac12
			\#S(\sigma,\mu)$ for any $\sigma \in B_\lambda^{\sim}$. 
		\end{theorem}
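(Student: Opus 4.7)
The plan is a direct double-counting argument based on the $B_n$-equivariance of $\CJfrak$.

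First I would check that $\CJfrak$ is $B_n$-equivariant. For any $x\in B_n$ the relation $x\tau=\tau x$ makes $(i,j)\mapsto(x(i),x(j))$ a self-bijection of the index set $\{(i,j):i<j,\,j\neq\tau(i)\}$, which by direct rewriting gives $\CJfrak(x\sigma x^{-1})=x\,\CJfrak(\sigma)\,x^{-1}$. Hence $\CJfrak$ preserves $\mathcal Z(B_n^{\sim})$, and the coefficient of any individual $\sigma'\in B_\mu^{\sim}$ in $\CJfrak(\CSum{\lambda})$ is the same for every $\sigma'\in B_\mu^{\sim}$; this common value is by definition $\langle\lambda\mid\CJfrak\mid\mu\rangle$.

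Second I would extract this coefficient by a direct count. Fix $\sigma\in B_\lambda^{\sim}$ and expand
\[
\CJfrak(\sigma)=\sum_{\substack{i<j\\ j\neq\tau(i)}}(ij)\,\sigma\,(\tau(i)\tau(j)).
\]
Each summand again lies in $B_n^{\sim}$ (a direct check from $\tau\sigma\tau=\sigma^{-1}$, using that $(ij)$ and $(\tau(i)\tau(j))$ are involutions), and it falls in $B_\mu^{\sim}$ exactly for those unordered $\{i,j\}$ whose ordered versions lie in $S(\sigma,\mu)$. Because the restriction $j\neq\tau(i)$ is symmetric in $i,j$, each unordered pair corresponds to exactly two ordered pairs in $S(\sigma,\mu)$, and so the number of summands of $\CJfrak(\sigma)$ landing in $B_\mu^{\sim}$ is $\tfrac12\#S(\sigma,\mu)$.

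Finally, the $B_n$-conjugation invariance of $\#S(\sigma,\mu)$ --- via the same self-bijection $(i,j)\mapsto(x(i),x(j))$ of the index set, as in Proposition~\ref{Pp:ConjCl} --- shows that $\tfrac12\#S(\sigma,\mu)$ is constant on $\sigma\in B_\lambda^{\sim}$, hence can be read off from any representative. Matching the total multiplicity obtained this way with $\langle\lambda\mid\CJfrak\mid\mu\rangle\cdot\#B_\mu^{\sim}$ coming from the $B_n$-equivariance of the target class decomposition yields the asserted identity. The whole calculation is essentially bookkeeping; the two points needing genuine care are the ordered/unordered factor of $\tfrac12$ and the fact that the exclusion $j\neq\tau(i)$ appears identically in the definitions of $\CJfrak$ and of $S$, which is exactly what makes the substitution $\sigma\leftrightarrow(ij)\sigma(\tau(i)\tau(j))$ a clean involution without exceptional cases.
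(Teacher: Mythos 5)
Your first two steps coincide with the paper's own argument: the $B_n$-equivariance $\CJfrak(x\sigma x^{-1})=x\,\CJfrak(\sigma)\,x^{-1}$, the count that exactly $\tfrac12\#S(\sigma,\mu)$ of the summands of $\CJfrak(\sigma)$ land in $B_\mu^{\sim}$ (the ordered/unordered factor of $\tfrac12$), and the conjugation-invariance of $\#S(\sigma,\mu)$, which makes this count depend on $\lambda,\mu$ only. The gap is your final ``matching'' sentence. With the reading you fix in your first paragraph --- $\langle\lambda\mid\CJfrak\mid\mu\rangle$ is the common coefficient of the elements of $B_\mu^{\sim}$ in $\CJfrak(\CSum{\lambda})$ --- the double count you set up gives
\begin{equation*}
\langle\lambda\mid\CJfrak\mid\mu\rangle\cdot\#B_\mu^{\sim}=\#B_\lambda^{\sim}\cdot\tfrac12\,\#S(\sigma,\mu),
\end{equation*}
i.e.\ $\langle\lambda\mid\CJfrak\mid\mu\rangle=\frac{\#B_\lambda^{\sim}}{\#B_\mu^{\sim}}\cdot\tfrac12\#S(\sigma,\mu)$: the class sizes do not cancel, so the asserted identity does not follow from this comparison. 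The discrepancy is genuine, not a bookkeeping slip: for $n=2$, $\tau=(13)(24)$, $\lambda=[2]$, $\mu=[1,1]$ one computes $\CJfrak(\CSum{[2]})=4e+2\,(12)(34)+2\,(14)(23)$, so the coefficient you call $\langle[2]\mid\CJfrak\mid[1,1]\rangle$ equals $4$, while $\tfrac12\#S\bigl((12)(34),[1,1]\bigr)=2$.

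The quantity the theorem is really after --- and the one used afterwards in the case analysis and in the proof of Theorem \ref{Th:CJtwisted}, where it appears as the coefficient of $p_\mu$ in $\CJ p_\lambda$ --- is the transition count for one fixed $\sigma\in B_\lambda^{\sim}$: the number of admissible unordered pairs $\{i,j\}$ with $(ij)\sigma(\tau(i)\tau(j))\in B_\mu^{\sim}$, which is $\tfrac12\#S(\sigma,\mu)$. Equivalently, the involution $(\sigma,(i,j))\mapsto\bigl((ij)\sigma(\tau(i)\tau(j)),(i,j)\bigr)$ gives $\#B_\lambda^{\sim}\,\#S(\sigma,\mu)=\#B_\mu^{\sim}\,\#S(\sigma',\lambda)$ for $\sigma\in B_\lambda^{\sim}$, $\sigma'\in B_\mu^{\sim}$, so $\tfrac12\#S(\sigma,\mu)$ is the coefficient of $\CSum{\lambda}$ in $\CJfrak(\CSum{\mu})$ rather than of $\CSum{\mu}$ in $\CJfrak(\CSum{\lambda})$; your last step conflates the two, and the extra ratio $\#B_\lambda^{\sim}/\#B_\mu^{\sim}$ is exactly what separates them. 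Under the correct reading your second paragraph already is the whole proof (and it is the paper's proof): fix $\sigma$, count the summands of $\CJfrak(\sigma)$ falling into $B_\mu^{\sim}$, and invoke conjugation-invariance of $\#S$; no total-mass comparison over $B_\mu^{\sim}$ is needed. A smaller point: your parenthetical check that each summand lies in $B_n^{\sim}$ only establishes membership in $C^{\sim}(\tau)$; to exclude $\tau$-self-symmetric cycles, note that $\sigma'\tau=(ij)(\sigma\tau)(ij)$ is conjugate to the fixed-point-free involution $\sigma\tau$, hence is itself a fixed-point-free involution.
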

		
		\begin{proof}
			By definition,
			\begin{equation}\label{Eq:ActionCJ}
				\CJfrak(\CSum{\lambda}) = \sum_{\sigma \in B_{\lambda}^{\sim}}
				\CJfrak(\sigma) = \sum_{\sigma \in
					B_\lambda^{\sim}}\sum_{\substack{1\leq i<j\leq 2n\\ j \ne
						\tau(i)}} (ij) \sigma(\tau(i)\tau(j)).
			\end{equation}
			As it was noted above, \eqref{Eq:ActionCJ} is a sum of identical
			summands, so
			\begin{equation*}
				\CJfrak(\CSum{\lambda}) = \#B_\lambda^{\sim} \sum_{\mu}
				\sum_{\substack{1\leq i<j\leq 2n\\ j \ne \tau(i)\\ (ij)
						\sigma(\tau(i)\tau(j)) \in B_{\mu}^{\sim}}} (ij) \sigma(\tau(i)\tau(j)).
			\end{equation*}
			for any fixed $\sigma \in B_\lambda^{\sim}$, and therefore
			\begin{align*}
				\CJfrak(\CSum{\lambda}) &= \sum_{\mu} \sum_{\substack{1\leq
						i<j\leq 2n\\ j \ne \tau(i)\\ (ij) \sigma(\tau(i)\tau(j))
						\in B_{\mu}^{\sim}}} \sum_{\tau \in B_{\mu}^{\sim}} \tau\\    
				&= \frac12 \sum_{\mu} \#\{(i,j) \mid j \ne i, \tau(i), 
				(ij)\sigma(\tau(i)\tau(j)) \in B_{\mu}^{\sim}\}
				\CSum{\mu}.
			\end{align*}
		\end{proof}
		
		Consider the generating function $\mathcal{H}^{\sim}(\beta,p)$ of the
		twisted Hurwitz numbers defined as follows:
		\begin{equation}\label{Eq:GFHurw}
			\mathcal{H}^{\sim}(\beta,p) = \sum_{m\geq 0} \sum_\lambda
			\frac{h_{m,\lambda}^{\sim}}{m!} p_{\lambda_1} p_{\lambda_2} \dots
			p_{\lambda_s} \beta^m.
		\end{equation}
		
		\begin{theorem}\label{Th:HurwCJ}
			$\mathcal{H}^{\sim}$ satisfies the cut-and-join equation
			$\pder{\mathcal{H}^{\sim}}{\beta} = \CJ(\mathcal{H}^{\sim})$.
		\end{theorem}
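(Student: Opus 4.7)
The strategy is to use the algebraic model (Theorem~\ref{Th:RibbonToPermut}), which gives $n!\,h_{m,\lambda}^{\sim}=\#\mathfrak H_{m,\lambda}$, and to prove the cut-and-join equation at the level of Hurwitz numbers, namely
\begin{equation*}
	h_{m+1,\mu}^{\sim}\;=\;\sum_{\lambda} h_{m,\lambda}^{\sim}\,\langle\lambda\mid\CJfrak\mid\mu\rangle
\end{equation*}
for every $m\ge 0$ and every partition $\mu$ of $n$. Multiplying by $\beta^m/m!$, summing over $m$, and invoking the commuting diagram of Definition~\ref{Df:CJ} (so that $\CJ(p_\lambda)=\sum_\mu\langle\lambda\mid\CJfrak\mid\mu\rangle\,p_\mu$) then yields $\pder{\mathcal H^{\sim}}{\beta}=\CJ(\mathcal H^{\sim})$ immediately.

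The combinatorial heart of the proof is the reindexing identity
\begin{equation*}
	P_{m+1}(\sigma_1,\sigma_2,\dots,\sigma_{m+1})\;=\;\sigma_1\cdot P_m(\sigma_2,\dots,\sigma_{m+1})\cdot(\tau\sigma_1\tau),
\end{equation*}
verified by expanding both sides and using $\tau^2=\name{id}$ to cancel the innermost pair of $\tau$'s. Viewing an arbitrary element of $\mathfrak H_{m+1,\mu}$ as ``first transposition $\sigma_1$ + tail of length $m$'' and grouping the tails by the $B_n$-conjugacy class $\lambda$ of $P_m^{<}\bydef P_m(\sigma_2,\dots,\sigma_{m+1})$, one obtains
\begin{equation*}
	\#\mathfrak H_{m+1,\mu}\;=\;\sum_{\lambda}\sum_{(\sigma_2,\dots,\sigma_{m+1})\in\mathfrak H_{m,\lambda}}\#\bigl\{\sigma_1\neq(k,k+n)\mid \sigma_1\,P_m^{<}\,(\tau\sigma_1\tau)\in B_\mu^{\sim}\bigr\}.
\end{equation*}

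To finish, for any fixed $\pi\in B_\lambda^{\sim}$ the inner count depends on $\pi$ only through $\lambda$: conjugating by any $x\in B_n$ commutes with $\tau$, so $\sigma_1\mapsto x\sigma_1 x^{-1}$ is a bijection between the sets of valid $\sigma_1$'s for $\pi$ and for $x\pi x^{-1}$, and $B_\lambda^{\sim}$ is a single $B_n$-orbit by Proposition~\ref{Pp:ConjCl}. Theorem~\ref{Th:TViaMult} identifies this common value with $\langle\lambda\mid\CJfrak\mid\mu\rangle$, since transpositions $\sigma_1=(ij)\ne(k,k+n)$ are in natural bijection with the unordered pairs $\{i,j\}$, $j\ne\tau(i)$, that index the summation in the definition of $\CJfrak$. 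Substituting this into the display above, and dividing by $n!$, proves the Hurwitz-level recursion and hence the PDE. The one delicate point is bookkeeping with the factor $\tfrac12$ in Theorem~\ref{Th:TViaMult}: this factor exactly reflects the ordered-versus-unordered-pair ambiguity in the count, and tracking it carefully is what ensures ``counting transpositions $\sigma_1$'' matches $\langle\lambda\mid\CJfrak\mid\mu\rangle$ on the nose rather than up to a stray constant.
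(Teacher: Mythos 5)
Your proposal is correct, and it amounts to making explicit the step that the paper compresses into the phrase ``an elementary combinatorial reasoning''. The paper works inside the twisted center: it forms $\mathcal G_n=\sum_{m,\lambda}\frac{n!h^{\sim}_{m,\lambda}}{m!}\CSum{\lambda}\beta^m$, asserts $\mathcal G_n=\sum_m\frac{\beta^m}{m!}(\CJfrak)^m(e_{2n})$, differentiates in $\beta$, and pushes the identity through $\Psi$. You instead prove the coefficient-level recursion $h^{\sim}_{m+1,\mu}=\sum_\lambda h^{\sim}_{m,\lambda}\langle\lambda\mid\CJfrak\mid\mu\rangle$ directly, via the peel-off identity $P_{m+1}(\sigma_1,\dots,\sigma_{m+1})=\sigma_1 P_m(\sigma_2,\dots,\sigma_{m+1})(\tau\sigma_1\tau)$, the $B_n$-conjugation invariance of the inner count (Proposition \ref{Pp:ConjCl}), and Theorem \ref{Th:TViaMult}; summing against $\beta^m/m!$ and using $\CJ(p_\lambda)=\sum_\mu\langle\lambda\mid\CJfrak\mid\mu\rangle p_\mu$ then gives the PDE. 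The underlying mechanism is identical --- adding one transposition to a sequence is one application of $\CJfrak$, so iterating from the empty sequence exponentiates the operator --- but your version buys an explicit, checkable recursion (including the correct handling of the factor $\tfrac12$ of Theorem \ref{Th:TViaMult}, which indeed just converts ordered pairs $(i,j)$ into transpositions), while the paper's version buys brevity by staying in $\mathcal Z(B_n^{\sim})$ and never invoking Theorem \ref{Th:TViaMult} or class sizes at all. One small point you leave implicit: to group the tails by $\mathfrak H_{m,\lambda}$ you need the product $P_m^{<}=\sigma_2\dots\sigma_{m+1}\tau\sigma_{m+1}\dots\sigma_2\tau$ to lie in $B_n^{\sim}$, i.e.\ to have no $\tau$-self-symmetric cycles; this is exactly the commutator argument in the proof of Theorem \ref{Th:Equiv} (the map $Q$ lands in $B_n^{\sim}$), and is worth a citation but is not a gap.
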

		
		\begin{proof}
			Fix a positive integer $n$ and denote by $\mathcal{H}_n^{\sim}$ a degree
			$n$ homogeneous component of $\mathcal{H}^{\sim}$. The twisted
			cut-and-join operator preserves the degree, so $\mathcal{H}^{\sim}$
			satisfies the cut-and-join equation if and only if $\mathcal{H}_n^{\sim}$
			does (for each $n$).
			
			Let
			\begin{equation*}
				\mathcal{G}_n \bydef \sum_{m \ge 0} \sum_{\lambda:
					\lmod\lambda\rmod = n}
				\frac{n!h_{m,\lambda}^{\sim}}{m!} \CSum{\lambda} \beta^m \in
				\Complex[S_{2n}]
			\end{equation*}
			where $\CSum{\lambda}$ is defined by \eqref{Eq:ClassSum}. An
			elementary combinatorial reasoning gives
			\begin{equation*}
				\mathcal{G}_n = \sum_{m\geq 0}
				\frac{\beta^m}{m!}(\CJfrak)^m(e_{2n})
			\end{equation*}
			where $e_{2n} \in S_{2n}$ is the unit element. Clearly
			$\CJfrak(\mathcal{G}_n) = \sum_{m\geq 0}
			\frac{\beta^m}{m!}(\CJfrak)^{m+1}(e_{2n}) = \sum_{m \ge 1}
			\frac{\beta^{m-1}}{(m-1)!} (\CJfrak)^{m}(e_{2n}) =
			\pder{\mathcal{G}_n}{\beta}$. Applying $\Psi$ one obtains $\Psi \CJfrak(\mathcal{G}_n) =
			\Psi(\pder{\mathcal{G}_n}{\beta}) = \pder{}{\beta}
			\Psi(\mathcal{G}_n)$.  By \eqref{Eq:Center}, $\Psi(\mathcal{G}_n) =
			\mathcal{H}_n^{\sim}$, hence $\pder{}{\beta} \Psi(\mathcal{G}_n) =
			\pder{\mathcal{H}^{\sim}_n}{\beta}$. By the definition of the twisted
			cut-and-join operator, $\Psi \CJfrak(\mathcal{G}_n) =
			\CJ(\Psi(\mathcal{G}_n)) = \CJ(\mathcal{H}^{\sim}_n)$, and the equality
			$\pder{\mathcal{H}^{\sim}_n}{\beta} = \CJ(\mathcal{H}^{\sim}_n)$ follows. 
		\end{proof}
		
		\begin{corollary}\label{Cr:HurwExp}
			$\mathcal{H}^{\sim}(\beta,p) = \exp(\beta \CJ) \exp(p_1)$.
		\end{corollary}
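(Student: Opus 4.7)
The plan is to combine Theorem \ref{Th:HurwCJ} with an initial-condition check at $\beta=0$, and then read off the formal exponential solution. Since $\CJ$ is degree-preserving, everything will happen inside the finite-dimensional component $\Complex[p]_n$ for each fixed $n$, which makes the ``exponential'' rigorous without any analytic issues.

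First I will evaluate $\mathcal{H}^{\sim}(0,p)$. Only the $m=0$ terms survive, and by Theorem \ref{Th:RibbonToPermut} the set $\mathfrak H_{0,\lambda}$ consists of the empty sequence, whose associated permutation is the identity $e \in S_{2n}$. The identity lies in $B_\lambda^{\sim}$ only for $\lambda=(1^n)$ (its cycles are the $n$ $\tau$-symmetric pairs $\{i,\tau(i)\}$), so $h_{0,\lambda}^{\sim}=0$ unless $\lambda=(1^n)$, and $h_{0,1^n}^{\sim}=1/n!$. Therefore
\begin{equation*}
    \mathcal{H}^{\sim}(0,p) \;=\; \sum_{n \ge 0} \frac{p_1^n}{n!} \;=\; \exp(p_1).
\end{equation*}

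Next I observe that the operator $\CJ$ preserves the total degree: this is clear from Definition \ref{Df:CJ}, since $\CJfrak$ preserves $\Complex[B_n^{\sim}]$ and $\Psi$ is an isomorphism onto $\Complex[p]_n$. Consequently, on each homogeneous component $\Complex[p]_n$ the cut-and-join equation $\pder{\mathcal H^{\sim}}{\beta} = \CJ(\mathcal H^{\sim})$ of Theorem \ref{Th:HurwCJ} is an honest linear ODE in $\beta$ with coefficients in the finite-dimensional space $\Complex[p]_n$. The unique solution to $\pder{F_n}{\beta} = \CJ(F_n)$ with initial value $F_n|_{\beta=0} = p_1^n/n!$ is the matrix exponential $\exp(\beta\CJ)(p_1^n/n!)$.

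Summing the degree-$n$ identities over $n$ and using linearity of $\exp(\beta \CJ)$ degree-by-degree gives
\begin{equation*}
    \mathcal H^{\sim}(\beta,p) \;=\; \sum_{n \ge 0} \exp(\beta \CJ)\!\left(\frac{p_1^n}{n!}\right) \;=\; \exp(\beta \CJ)\exp(p_1),
\end{equation*}
which is the claim. The only ``obstacle'' is the initial-condition bookkeeping; once one observes that the identity in $S_{2n}$ forces $\lambda=(1^n)$ and that $\CJ$ respects the grading, the rest is a textbook ODE argument.
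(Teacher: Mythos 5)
Your proof is correct and follows essentially the same route as the paper: verify the initial condition $\mathcal{H}^{\sim}(0,p)=\exp(p_1)$ (the paper reads it off the topological definition \eqref{Eq:DefHurw}, you read it off the algebraic model, with the same result) and then integrate the cut-and-join equation of Theorem \ref{Th:HurwCJ}. Your explicit remark that $\CJ$ preserves degree, so the equation is a finite-dimensional linear ODE with a unique solution $\exp(\beta\CJ)$ applied to the initial data, just makes precise what the paper leaves implicit.
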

		
		\begin{proof}
			It follows from \eqref{Eq:DefHurw} that $h_{0,\lambda} =
			\frac{1}{n!}$ if $\lambda = 1^n$ and $h_{0,\lambda} = 0$
			otherwise. Thus, $\mathcal{H}^{\sim}(0,p) = \exp(p_1)$, and the
			formula follows from Theorem \ref{Th:HurwCJ}. 
		\end{proof}
		
		\subsection{Explicit formulas}
		
		In this section we prove explicit formulas for the cut-and-join
		operator (Theorem \ref{Th:CJtwisted}) and for twisted Hurwitz numbers
		(Theorem \ref{Th:HurwExpl}).
		
		\begin{theorem}\label{Th:CJtwisted}
			The twisted cut-an-join operator is given by 
			\begin{align}
				\CJ &= \sum_{i,j \ge 1} (i+j) p_i p_j \pder{}{p_{i+j}} + 2ij
				p_{i+j} \pdertwo{}{p_i}{p_j} + \sum_{k \ge 1} k(k-1) p_k
				\pder{}{p_k}\nonumber\\
				&= \sum_{i,j \ge 1} (i+j) (p_i p_j + p_{i+j}) \pder{}{p_{i+j}}
				+ 2ij p_{i+j} \pdertwo{}{p_i}{p_j}\label{Eq:CJtwisted}
			\end{align}
		\end{theorem}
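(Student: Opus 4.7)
My plan is to invoke Theorem \ref{Th:TViaMult}: it suffices to fix a representative $\sigma \in B_\lambda^{\sim}$ with $\tau$-symmetric cycle decomposition $\sigma = c_1 c_1' \cdots c_s c_s'$, where $c_k, c_k'$ is the $k$-th $\tau$-symmetric pair of cycles of common length $\lambda_k$, and for each unordered pair $\{i,j\}$ with $j \ne i, \tau(i)$ to determine the cycle type $\mu$ of $\sigma' = (ij)\sigma(\tau(i)\tau(j))$ and tally the pairs producing each $\mu$. The operator $\CJ$ is then read off from the resulting transition matrix via the isomorphism $\Psi$.

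A useful organizing device is the auxiliary graph on $\{1, \ldots, 2n\}$ whose edges are the two perfect matchings $M_\tau = \{\{i, \tau(i)\}\}$ and $M_\rho$ corresponding to the fixed-point-free involution $\rho = \sigma\tau$. Its connected components are cycles of even length $2\lambda_k$, each encoding one $\tau$-symmetric pair $(c_k, c_k')$. From $(\tau(i)\tau(j)) = \tau(ij)\tau$ one checks that $\sigma'\tau = (ij)\rho(ij)$, so the operation $\sigma \mapsto \sigma'$ simply swaps the labels $i \leftrightarrow j$ inside $M_\rho$; the new cycle type $\mu$ can therefore be read off from what this swap does to the graph.

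The admissible pairs then fall into three disjoint families: (A) $i$ and $j$ lie in different $\tau$-pairs, say $c_k \cup c_k'$ and $c_l \cup c_l'$ --- the label swap merges the two graph cycles, producing the \emph{join} partition in which $\lambda_k$ and $\lambda_l$ combine into $\lambda_k + \lambda_l$; (B) $i$ and $j$ lie in the same $\sigma$-cycle --- the swap splits one graph cycle into two, producing the \emph{cut} partition in which $\lambda_k$ is replaced by $a$ and $b$ with $a + b = \lambda_k$; (C) $i \in c_k$ and $j \in c_k'$ with $j \ne \tau(i)$ --- here the graph cycle is merely re-routed, so $\mu = \lambda$. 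Direct enumeration yields $4\lambda_k\lambda_l$ unordered pairs in case (A), $2\lambda_k$ pairs in case (B) for each split $\{a,b\}$ with $a \ne b$ and $\lambda_k$ pairs for each split with $a = b$, and $\lambda_k(\lambda_k - 1)$ pairs in case (C) per $\tau$-pair $k$.

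The hard part will be case (C): one must trace the four affected $M_\rho$-edges around the graph cycle of length $2\lambda_k$ and verify that the re-wired graph is still a single cycle. The condition $j \ne \tau(i)$ guarantees that the two modified $M_\rho$-edges are distinct, and a parity argument (reflecting that $i$ and $j$ occupy positions of opposite parity along the graph cycle) shows that the re-wiring glues the four cut endpoints back into one cycle rather than splitting off a subcycle. Once cases (A)--(C) are settled, matching the counts against the differential operator gives exactly the three summands of the first displayed form of $\CJ$, and the equivalence with the second form is the algebraic identity $\sum_{i,j \ge 1}(i+j) p_{i+j} \partial/\partial p_{i+j} = \sum_{m \ge 2} m(m-1) p_m \partial/\partial p_m$, obtained by grouping by $m = i + j$.
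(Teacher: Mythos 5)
Your proposal is correct and follows essentially the same route as the paper: reduce to Theorem \ref{Th:TViaMult}, fix $\sigma \in B_\lambda^{\sim}$, classify the admissible pairs $\{i,j\}$ by their position relative to the $\tau$-symmetric cycle pairs into cut, join and identity cases, count them (your counts agree with the paper's, and your case (C) tally $\lambda_k(\lambda_k-1)$ per pair is exactly the one consistent with the $\tfrac12$ normalization of Theorem \ref{Th:TViaMult}), and then match the totals against the differential operator. The only real difference is bookkeeping: you encode $\sigma$ by the two matchings $M_\tau$ and $M_{\sigma\tau}$ and view the move as the label swap $\sigma'\tau=(ij)(\sigma\tau)(ij)$, whereas the paper argues directly on the cycles of $\sigma$.
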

		
		\noindent (The two formulas are equivalent because there are $k-1$
		pairs $(i,j)$ such that $i, j \ge 1$ and $i+j = k$.)
		
		To prove Theorem \ref{Th:CJtwisted} we calculate explicitly the matrix
		elements $\langle \lambda \mid \CJfrak \mid \mu\rangle$ for all
		possible $\lambda, \mu$.
		
		Let $\sigma \in S_n$ and $1 \le i < j \le n$. The cycle structure of
		the product $\sigma' = (ij)\sigma$ depends on the cycle structure of
		$\sigma$ and on $i$ and $j$ as follows: if $i$ and $j$ belong to the
		same cycle $(x_1 \DT, x_\ell)$ of $\sigma$ (where $i = x_1$, $j =
		x_m$), then $\sigma'$ contains two cycles $(x_1 \DT, x_{m-1})$ and
		$(x_m \DT, x_\ell)$ instead (``a cut''). If $i$ and $j$ are in
		different cycles $(x_1 \DT, x_m)$ and $(y_1 \DT, y_k)$ (where $i =
		x_1$ and $j = y_1$) then $\sigma'$ contains the cycle $(x_1 \DT, x_m,
		y_1 \DT, y_k)$ instead (``a join'').
		
		Let now $\sigma \in B_\lambda^{\sim} \subset B_n^{\sim}$ where
		$\lambda = 1^{a_1}2^{a_2} \dots n^{a_n}$ (in other words, the element
		$\sigma \in S_{2n}$ contains $a_s$ pairs of $\tau$-symmetric cycles of
		length $s$ for $s = 1 \DT, n$). Let $1 \le i < j \le 2n$, $j \ne
		\tau(i)$ and $\sigma' \bydef (ij)\sigma(\tau(i)\tau(j)) \in
		B_{\mu}^{\sim}$. The cyclic structure of $\sigma'$ depends on the
		positions of $i$, $j$, $\tau(i)$, $\tau(j)$ and on the cycles of
		$\sigma$; there are three possible cases shown in
		Fig.~\ref{Fg:3cases}.
		
		\begin{figure}
			\includegraphics[scale=0.6]{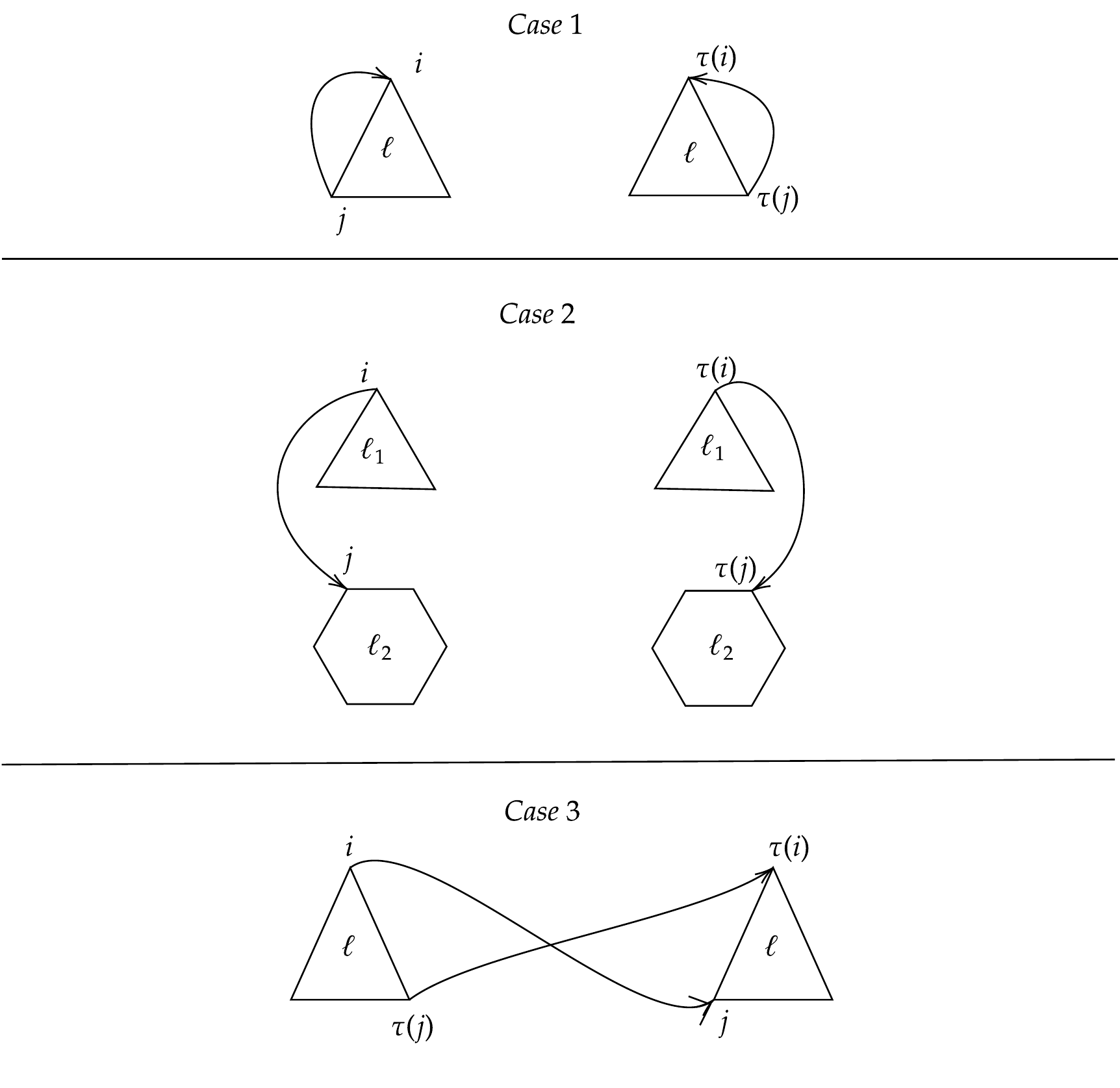} 
			\caption{Terms of $\CJfrak$}\label{Fg:3cases}
		\end{figure}
		
		{\def \labelenumi {Case \theenumi.}
			\begin{enumerate}
				\item Here $\mu$ is obtained from $\lambda$ by a cut:
				\begin{alignat*}{2}
					\mu &= 1^{a_1} \dots m^{a_m+1} \dots
					k^{a_k+1} \dots \ell^{a_\ell-1} \dots n^{a_n},
					&&\quad m+k=\ell, m < k,\\
					&\text{or}\\
					\mu &= 1^{a_1} \dots m^{a_m+2} \dots \ell^{a_\ell-1}
					\dots n^{a_n}, &&\quad m = \ell/2.
				\end{alignat*}
				For a fixed $\sigma \in B_\lambda^{\sim}$ look for $i, j$ such
				that $\sigma' \bydef (ij)\sigma(\tau(i)\tau(j)) \in
				B_{\mu}^{\sim}$. The element $\sigma$ contains $2a_\ell$
				cycles of length $\ell$, so there are $2\ell a_\ell$ possible
				positions for $i$. In $\sigma'$ the elements $i$ and $j$ are in
				different cycles; if $m < k$ then $m$ may be the
				length of either. So if $m < k$ then $j$ should be in
				the same cycle in $\sigma$ as $i$, and the distance between them
				is either $m$ or $k$. So there are two possible
				positions for $j$ once $i$ is chosen, and $\langle \mu \mid
				\CJfrak \mid \lambda \rangle = \frac12 \#S(\sigma,\mu) =
				2\ell a_\ell$. If $m = k = \ell/2$ then the position for
				$j$ is unique and $\langle \mu \mid \CJfrak \mid \lambda
				\rangle = \ell a_\ell$.
				
				\item Here $\mu$ is obtained from $\lambda$ by a join:
				\begin{alignat*}{2}
					\mu &= 1^{a_1} \dots m^{a_m-1} \dots
					k^{a_k-1} \dots \ell^{a_\ell+1} \dots n^{a_n},
					&&\quad m+k=\ell, m < k,\\
					&\text{or}\\
					\mu &= 1^{a_1} \dots m^{a_m-2} \dots
					\ell^{a_\ell+1} \dots n^{a_n}, &&m = \ell/2
				\end{alignat*}
				If $m<k$ then $i$ may belong to the cycle of either length. If $i$
				belongs to the cycle of length $m$ then there are $2ma_m$ possible
				positions for it (cf.\ Case 1) and $2ka_k$ positions for $j$;
				vice versa if $i$ belongs to the cycle of length $k$. The matrix
				element is then $\langle \mu \mid \CJfrak \mid \lambda
				\rangle = 4mk a_m a_k$. If $m=k = \ell/2$ then $i$ and $j$ belong
				to cycles of the same length $m$; the cycle containing $j$
				contains neither $i$ nor $\tau(i)$. Hence there are $4a_m (a_m-1)$
				possibilities for choosing a pair of cycles to contain $i$ and
				$j$, and $m^2$ possible positions for $i$ and $j$ in
				them. Therefore $\langle \mu \mid \CJfrak \mid \lambda\rangle
				= 2m^2a_m(a_m-1)$.
				
				\item Here $\mu = \lambda$. Like in the previous cases we have
				$2\ell a_\ell$ possible positions for $i$ and $\ell-1$ positions
				for $j \ne \tau(i)$ (in the cycle $\tau$-symmetric to the one
				containing $i$) once $i$ is fixed. Thus, $\langle \mu \mid
				\CJfrak \mid \lambda\rangle = \sum_\ell 2\ell(\ell-1) a_\ell$.  
		\end{enumerate}}
		
		\begin{proof}[Proof of Theorem \ref{Th:CJtwisted}]
			It follows from Theorem \ref{Th:TViaMult} and Definition \ref{Df:CJ}
			that $\CJ p_\lambda = \sum_\mu \langle\lambda \mid \CJfrak
			\mid \mu\rangle p_\mu$.
			
			For a given $\lambda$ there are three types of $\mu$ such that
			$\langle\lambda \mid \CJfrak \mid \mu\rangle \neq 0$ listed
			above. Hence $\CJ$ is a sum of three terms.
			
			Suppose $\mu$ is like in Case 1 with $m < k$. The monomial
			$p_\lambda$ contains $p_m^{a_m} p_k^{a_k} p_\ell^{a_\ell}$ and the
			monomial $p_\mu$ contains $p_m^{a_m+1} p_k^{a_k+1}
			p_\ell^{a_\ell-1}$; the other factors are the same. So the term in
			\eqref{Eq:CJtwisted} acting on $p_\lambda$ and giving $p_\mu$
			is $2\ell p_m p_k \pder{}{p_\ell} p_\lambda = 2\ell a_\ell
			p_\mu = \langle \mu \mid \CJfrak \mid \lambda \rangle
			p_\mu$ (actually there are two equal terms in the sum: $i =
			m, j = k$ or vice versa, hence the factor $2$).
			
			If $\mu$ is like in Case 1 with $m = \ell/2$ then $p_\lambda$
			contains $p_{\ell/2}^{a_{\ell/2}} p_\ell^{a_\ell}$ and $\mu$
			contains $p_{\ell/2}^{a_{\ell/2}+2} p_\ell^{a_\ell-1}$. So the only
			term in \eqref{Eq:CJtwisted} acting on $p_\lambda$ and giving
			$p_\mu$ is $\ell p_{\ell/2}^2 \pder{}{p_\ell} p_\lambda =
			\ell a_\ell p_\mu = \langle \mu \mid \CJfrak \mid
			\lambda \rangle p_\mu$.
			
			The calculations for the two remaining cases are similar.
		\end{proof}
		
		By Theorem \ref{Th:CJtwisted}, $\CJ = \CJcl + \Tw$ where
		\begin{equation*}
			\CJcl = \sum_{i,j \ge 1} (i+j) p_i p_j \pder{}{p_{i+j}} +
			ij p_{i+j} \pdertwo{}{p_i}{p_j}
		\end{equation*}
		is the classical cut-and-join, and
		\begin{equation*}
			\Tw = \sum_{i,j \ge 1} p_{i+j} ((i+j) \pder{}{p_{i+j}} + ij
			\pdertwo{}{p_i}{p_j}).
		\end{equation*}
		
		A one-parametric family
		\begin{equation*}
			\Delta_\alpha \bydef \CJcl + (\alpha-1)\Tw = \sum_{i,j \ge 1} (i+j)
			(p_i p_j + (\alpha-1) p_{i+j}) \pder{}{p_{i+j}} + \alpha ij p_{i+j}
			\pdertwo{}{p_i}{p_j}
		\end{equation*}
		is called \cite{Macdonald} the Laplace-Beltrami operator; in
		particular, $\Delta_1 = \CJcl$ is the classical cut-and-join and
		$\Delta_2 = \CJ$, the twisted cut-and-join. By the classical results
		of \cite[p.~376 and after]{Macdonald}, the eigenvalues (and
		eigenvectors) of $\Delta_\alpha$ are indexed by partitions. The
		eigenvalue corresponding to $\lambda = (\lambda_1 \DT\ge \lambda_s)$
		is equal to
		\begin{equation*}
			e(\lambda,\alpha) = \sum_{i=1}^s \lambda_i(\alpha \lambda_i +
			2-2i-\alpha).
		\end{equation*}
		The corresponding eigenvector is a polynomial
		$J_\lambda^{(\alpha)}(p)$ of degree $\lmod\lambda\rmod \bydef
		\lambda_1 \DT+ \lambda_s$ called Jack polynomial; it is normalized so
		that the coefficient at $p_1^n$ in it is $1$. Polynomials $Z_\lambda
		\bydef J_\lambda^{(2)}$ are called zonal.
		
		\begin{theorem}[\cite{Macdonald}]
			\begin{equation*}
				\sum_\lambda
				\frac{J_\lambda^{(\alpha)}(p)J_\lambda^{(\alpha)}(q)}{H_\lambda(\alpha)
					H'_\lambda(\alpha)} = \exp(\sum_{k \ge 1} \frac{p_k
					q_k}{k\alpha}).
			\end{equation*}
			where $H_\lambda(\alpha) \bydef \prod_{(i,j) \in Y(\lambda)} (\alpha
			a(i,j) + \ell(i,j) + 1)$ and $H'_\lambda(\alpha) \bydef \prod_{(i,j)
				\in Y(\lambda)} (\alpha a(i,j) + \ell(i,j) + \alpha)$. Here
			$Y(\lambda)$ is the Young diagram of the partition $\lambda$, and
			$a(i,j)$ and $\ell(i,j)$ are the arm and the leg, respectively, of
			the cell $(i,j) \in Y(\lambda)$.
		\end{theorem}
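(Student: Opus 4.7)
The plan is to recognize the right-hand side as the reproducing kernel of the \emph{Jack inner product}
\[
\langle p_\lambda, p_\mu \rangle_\alpha \bydef \delta_{\lambda\mu}\, z_\lambda\, \alpha^{\ell(\lambda)},
\qquad z_\lambda \bydef \prod_k k^{m_k(\lambda)} m_k(\lambda)!,
\]
where $m_k(\lambda)$ is the multiplicity of $k$ in $\lambda$, and to exhibit an explicit pair of dual bases built out of the Jack polynomials. Expanding $\exp(\sum_k p_k q_k/(k\alpha))$ as a formal series and collecting monomials in the $p_k$'s gives
\[
\exp\Bigl(\sum_{k\ge 1}\frac{p_k q_k}{k\alpha}\Bigr) = \sum_\lambda \frac{p_\lambda(p)\,p_\lambda(q)}{z_\lambda \alpha^{\ell(\lambda)}},
\]
which is exactly the statement that this exponential is the reproducing kernel: for \emph{any} pair of bases $(u_\lambda),(v_\lambda)$ of $\Complex[p]$ satisfying $\langle u_\lambda, v_\mu\rangle_\alpha = \delta_{\lambda\mu}$, one has $\sum_\lambda u_\lambda(p) v_\lambda(q) = \exp(\sum_k p_k q_k/(k\alpha))$. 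So it suffices to prove that the families $u_\lambda = J_\lambda^{(\alpha)}/H_\lambda(\alpha)$ and $v_\lambda = J_\lambda^{(\alpha)}/H'_\lambda(\alpha)$ are dual, i.e.\ that $\langle J_\lambda^{(\alpha)}, J_\mu^{(\alpha)}\rangle_\alpha = \delta_{\lambda\mu}\, H_\lambda(\alpha) H'_\lambda(\alpha)$.

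Orthogonality across distinct partitions is the easier half. First I would verify that the operators $p_k\cdot$ and $k\alpha\,\partial/\partial p_k$ are mutually adjoint under $\langle\cdot,\cdot\rangle_\alpha$ by direct computation in the power-sum basis; this formal calculation reduces to the identity $\langle p_k p_\lambda, p_\mu\rangle_\alpha = \langle p_\lambda, k\alpha\,\partial p_\mu/\partial p_k\rangle_\alpha$, which is elementary. Combining the three summands of $\Delta_\alpha$ and their adjoints shows that $\Delta_\alpha$ is self-adjoint. Since the eigenvalues $e(\lambda,\alpha) = \sum_i \lambda_i(\alpha\lambda_i + 2-2i-\alpha)$ separate partitions for generic $\alpha$ (and hence, by polynomial continuity of the Jack polynomials in $\alpha$, for all $\alpha$ outside a discrete set, which is enough for the formal series identity we want), the Jack polynomials form an orthogonal family.

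The genuine obstacle is the norm formula $\langle J_\lambda^{(\alpha)}, J_\lambda^{(\alpha)}\rangle_\alpha = H_\lambda(\alpha) H'_\lambda(\alpha)$. The cleanest route I know goes through the Pieri rule for $J_\lambda^{(\alpha)}$: multiplication by $p_1$ expresses $p_1 J_\lambda^{(\alpha)}$ as a sum $\sum_{\mu\supset\lambda,\,|\mu|=|\lambda|+1} c_{\lambda\mu}(\alpha)\, J_\mu^{(\alpha)}$ with coefficients given explicitly in terms of arm- and leg-lengths of the skew cell. Taking inner products and using that $p_1$ is adjoint to $\alpha\,\partial/\partial p_1$ yields a two-term recursion relating $\|J_\mu^{(\alpha)}\|^2$ to $\|J_\lambda^{(\alpha)}\|^2$, which telescopes along any path up the Young lattice to give the product over cells $(i,j)\in Y(\lambda)$ of $(\alpha a(i,j) + \ell(i,j)+1)(\alpha a(i,j)+\ell(i,j)+\alpha)$. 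Establishing the Pieri coefficients themselves is the combinatorial heart of the argument; I would derive them by inverting a triangular change of basis between the $J_\lambda^{(\alpha)}$ and the monomials $m_\lambda$, controlled by the eigenvalue equation $\Delta_\alpha J_\lambda^{(\alpha)} = e(\lambda,\alpha) J_\lambda^{(\alpha)}$, and then identifying the resulting rational expressions with the arm/leg product. Once the norm formula is in hand, the duality of the two families claimed above is immediate, and the Cauchy identity follows from the reproducing-kernel formalism of the first paragraph.
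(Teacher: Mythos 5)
The paper itself does not prove this statement --- it is quoted from Macdonald's book --- so the only question is whether your sketch would stand on its own, and it has one genuine gap: the claim that the eigenvalues $e(\lambda,\alpha)$ of $\Delta_\alpha$ ``separate partitions for generic $\alpha$'' is false. Since $e(\lambda,\alpha)=2\alpha n(\lambda')-2n(\lambda)$ with $n(\lambda)=\sum_i(i-1)\lambda_i$, any two partitions sharing both $n(\lambda)$ and $n(\lambda')$ have identical eigenvalues for \emph{every} $\alpha$; the pair $\lambda=(3,1,1,1)$, $\mu=(2,2,2)$ (or their conjugates $(4,1,1)$, $(3,3)$) is such a pair, so genericity in $\alpha$ cannot rescue the argument. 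Consequently self-adjointness of $\Delta_\alpha$ only yields $\langle J_\lambda,J_\mu\rangle_\alpha=0$ when the eigenvalues differ, and orthogonality inside a degenerate eigenspace --- which is exactly what the Cauchy identity needs, since a single off-diagonal term ruins the dual-basis computation --- is not established. The classical repairs are substantive: either one uses the full commuting Sekiguchi--Debiard family (whose joint spectrum is simple) rather than the single Laplace--Beltrami operator, or one deduces the Jack case as the $q=t^\alpha$, $t\to1$ limit of Macdonald polynomials, where the relevant one-variable family of operators does have simple spectrum, or one invokes dominance-triangularity together with a separate argument for incomparable partitions (coincident eigenvalues occur only for dominance-incomparable pairs). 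Your sketch would need one of these ingredients spelled out.

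A second, smaller reservation: the norm formula, which you correctly identify as the heart of the matter, is only asserted. The telescoping recursion you describe needs not just the Pieri expansion of $p_1 J_\lambda^{(\alpha)}$ but also the expansion of the adjoint operator $\alpha\,\partial/\partial p_1$ applied to $J_\mu^{(\alpha)}$ in the $J$-basis (or, alternatively, the normalization $\langle J_\lambda^{(\alpha)},p_1^n\rangle_\alpha=n!\,\alpha^n$ coming from the coefficient of $p_1^n$ being $1$); using orthogonality to convert one into the other is circular at the point where the norms are still unknown. The reproducing-kernel reduction in your first paragraph and the adjointness of $p_k$ and $k\alpha\,\partial/\partial p_k$ are correct and standard; with the orthogonality step repaired and the Pieri/dual-Pieri data taken from the literature, the plan does reproduce the textbook proof, but as written it does not yet constitute one.
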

		
		Substituting $q_1 = \alpha, q_2 = q_3 \DT= 0$ and taking into account
		the normalization of the Jack polynomials one obtains
		\begin{corollary}\label{Cr:Cauchy}
			$\sum_\lambda \frac{\alpha^{\lmod\lambda\rmod}
				J_\lambda^{(\alpha)}(p)}{H_\lambda(\alpha) H'_\lambda(\alpha)} =
			\exp(p_1)$.
		\end{corollary}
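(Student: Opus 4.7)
The plan is to specialize the Cauchy-type identity of the preceding theorem at $q_1 = \alpha$ and $q_k = 0$ for all $k \ge 2$. Under this substitution the right-hand side $\exp(\sum_{k \ge 1} p_k q_k/(k\alpha))$ collapses to $\exp(p_1 \cdot \alpha/(1 \cdot \alpha)) = \exp(p_1)$, which already matches the right-hand side of the claimed corollary. The task thus reduces to evaluating $J_\lambda^{(\alpha)}(q)$ at this same specialization.

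The key observation is that $J_\lambda^{(\alpha)}$ is homogeneous of total degree $|\lambda|$ under the grading $\deg p_k = k$, as is visible from its characterization as an eigenvector of $\Delta_\alpha$ acting on the graded component $\Complex[p]_n$. Consequently, any monomial in $J_\lambda^{(\alpha)}(q)$ that survives the substitution $q_k = 0$ for $k \ge 2$ must be a pure power of $q_1$ of weight $|\lambda|$, and hence must be a scalar multiple of $q_1^{|\lambda|}$. The normalization stated in the preamble to the theorem — that the coefficient of $p_1^n$ in $J_\lambda^{(\alpha)}$ equals $1$ — pins this scalar to $1$, giving $J_\lambda^{(\alpha)}(\alpha, 0, 0, \dots) = \alpha^{|\lambda|}$.

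Substituting these two evaluations into the Cauchy identity of the preceding theorem produces exactly $\sum_\lambda \alpha^{|\lambda|} J_\lambda^{(\alpha)}(p)/(H_\lambda(\alpha) H'_\lambda(\alpha)) = \exp(p_1)$. There is no real obstacle here; the proof is a one-line specialization whose only substantive content is the homogeneity-plus-normalization computation of a Jack polynomial at $(\alpha, 0, 0, \dots)$.
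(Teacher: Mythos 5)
Your proposal is correct and is exactly the paper's argument: the paper's proof is the one-line remark that one substitutes $q_1 = \alpha$, $q_2 = q_3 = \dots = 0$ into the Cauchy identity and uses the normalization of the Jack polynomials, which is precisely your specialization together with the homogeneity-plus-normalization evaluation $J_\lambda^{(\alpha)}(\alpha,0,0,\dots) = \alpha^{\lmod\lambda\rmod}$. You merely spell out the details the paper leaves implicit.
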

		
		Taking now $\alpha=2$ and substituting the formula of Corollary
		\ref{Cr:Cauchy} into Corollary \ref{Cr:HurwExp}, one obtains
		\begin{theorem}\label{Th:HurwExpl}
			\begin{equation*}
				\mathcal{H}^{\sim}(\beta,p) = \sum_{\lambda} \exp\bigl(2\beta \sum_i
				\lambda_i (\lambda_i-i)\bigr) \frac{2^{\lmod\lambda\rmod}
					Z_\lambda(p)}{H_\lambda(2) H'_\lambda(2)}.
			\end{equation*}
		\end{theorem}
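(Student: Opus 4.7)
The plan is to simply combine three ingredients already assembled in the paper: the exponential formula for $\mathcal{H}^{\sim}$ from Corollary~\ref{Cr:HurwExp}, the Cauchy-type specialization from Corollary~\ref{Cr:Cauchy}, and the identification of $\CJ$ as the Laplace--Beltrami operator $\Delta_2$ together with its spectral decomposition in terms of zonal polynomials $Z_\lambda = J_\lambda^{(2)}$.

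First, I would set $\alpha = 2$ in Corollary~\ref{Cr:Cauchy} to rewrite the initial condition as
\begin{equation*}
\exp(p_1) = \sum_\lambda \frac{2^{\lmod\lambda\rmod} Z_\lambda(p)}{H_\lambda(2)\,H'_\lambda(2)}.
\end{equation*}
Next, I would invoke the observation made right before Corollary~\ref{Cr:Cauchy} that $\CJ = \Delta_2$, so that each zonal polynomial $Z_\lambda$ is an eigenfunction of $\CJ$ with eigenvalue
\begin{equation*}
e(\lambda,2) = \sum_{i} \lambda_i(2\lambda_i + 2 - 2i - 2) = 2\sum_i \lambda_i(\lambda_i - i).
\end{equation*}
Consequently $\exp(\beta\CJ)$ acts diagonally in the zonal basis: $\exp(\beta\CJ)(Z_\lambda) = \exp\bigl(2\beta \sum_i \lambda_i(\lambda_i-i)\bigr) Z_\lambda$.

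Finally, substituting the zonal expansion of $\exp(p_1)$ into the formula $\mathcal{H}^{\sim}(\beta,p) = \exp(\beta\CJ)\exp(p_1)$ from Corollary~\ref{Cr:HurwExp} and applying the exponential termwise yields exactly the claimed identity. The only slightly subtle point is the termwise application of $\exp(\beta\CJ)$ to an infinite sum; this is legitimate because $\CJ$ preserves each homogeneous component $\Complex[p]_n$, the sum restricted to $\lmod\lambda\rmod = n$ is finite, and both sides are graded formal series in $p$ with polynomial coefficients in $\beta$. I do not foresee any serious obstacle here — the theorem is a packaging of standard Jack-polynomial facts with the two corollaries, and all the genuinely new content (the algebraic model, the cut-and-join equation, and the identification of $\CJ$ with $\Delta_2$) has already been established.
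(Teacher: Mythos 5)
Your proposal is correct and follows exactly the paper's route: expand $\exp(p_1)$ in zonal polynomials via Corollary~\ref{Cr:Cauchy} at $\alpha=2$, use $\CJ=\Delta_2$ with eigenvalues $e(\lambda,2)=2\sum_i\lambda_i(\lambda_i-i)$ on $Z_\lambda$, and apply $\exp(\beta\CJ)$ termwise in Corollary~\ref{Cr:HurwExp}. The paper states the theorem as an immediate consequence of these same ingredients, so there is nothing to add.
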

		This is a twisted analog of the formula expressing the usual Hurwitz
		numbers via Schur polynomials, see \cite{LandoKazarian}.
		
		\begin{example}
			The zonal polynomials $Z_\lambda$ for small $\lambda$ are:
			\begin{trivlist}
				\item $Z_{[1]}=p_1,$ with $H_{[1]}(2)H_{[1]}'(2)=2$
				\item $Z_{[1,1]}=p_1^2-p_2$ with $H_{[1,1]}(2)H_{[1,1]}'(2)=12$
				\item $Z_{[2]}=p_1^2+2p_2$ with $H_{[2]}(2)H_{[2]}'(2)=24$
				\item $Z_{[1,1,1]}=p_1^3-3p_2p_1+2p_3$ with
				$H_{[1,1,1]}(2)H_{[1,1,1]}'(2)=144$
				\item $Z_{[2,1]}=p_1^3+p_2p_1-2p_3$ with with
				$H_{[2,1]}(2)H_{[2,1]}'(2)=80$
				\item $Z_{[3]}=p_1^3+6p_2p_1+8p_3$ with $H_{[3]}(2)H_{[3]}'(2)=720$
			\end{trivlist}
			
			This gives us the first few terms in the expansion of
			$\mathcal{H}^{\sim}(\beta,p)$:
			\begin{align*}
				\mathcal{H}^{\sim}(\beta,p) &= p_1 +
				\frac{p_1^2}{6}(2e^{-2\beta}+e^{4\beta}) +
				\frac{p_2}{3}(-e^{-2\beta}+e^{4\beta}) +
				\frac{p_1^3}{90}(9e^{2\beta}+e^{12\beta}+5e^{-6\beta})\\
				&+ \frac{p_2p_1}{30}(2e^{12\beta}+3e^{2\beta}-5e^{-6\beta})
				+\frac{p_3}{45}(4e^{12\beta}-9e^{2\beta}+5e^{-6\beta}) + \dots\\
				&= (p_1 + \frac{p_1^2}{2} + \frac{p_1^3}{6} + \dots) + \beta (2p_1 + 2p_1
				p_2 + \dots)\\
				&\hphantom{= (p_1 + \frac{p_1^2}{2} + \frac{p_1^3}{6} + \dots)} +
				\beta^2 (2p_1^2 + 2p_2 + 2p_1^3 + 2p_1 p_2 + 8p_3 + \dots) + \dots ;
			\end{align*}
			they agree with \eqref{Eq:GFHurw} and Example \ref{Ex:HNumbers}.
		\end{example}
		
		\section{Algebro-geometric model: twisted branched coverings}
		
		A classical notion of the branched covering was extended to the
		non-orientable case by G.\,Chapuy and M.\,Dołęga in \cite{CD}. Let $N$
		denote a closed surface (compact $2$-manifold without boundary, not
		necessarily orientable), and $p: \wN \to N$, its orientation cover. As
		above, denote by $\T: \wN \to \wN$ an orientation-reversing involution
		without fixed points such that $p \circ \T = p$. Also denote by
		$\mathcal J: \Complex P^1 \to \Complex P^1$ the complex conjugation,
		and let $\Upp \bydef \Complex P^1/(z \sim {\mathcal J}(z)) = \mathbb H
		\cup \{\infty\}$ where $\mathbb H \subset \Complex$ is the upper
		half-plane; $\Upp$ is homeomorphic to a disk. Denote by $\pi: \Complex
		P^1 \to \Upp$ the quotient map.
		
		\begin{definition}[\cite{CD}]\label{Df:Branched}
			A continuous map $f: N \to \Upp$ is called a twisted branched
			covering if there exists a branched covering $\wf: \wN \to \Complex
			P^1$ such that
			\begin{enumerate}
				\item\label{It:MapReal} $\pi \circ \wf = f \circ p$, and
				\item all the critical values of $\wf$ are real.
			\end{enumerate}
		\end{definition}
		
		Property (\ref{It:MapReal}) is equivalent to saying that $\wf$ is a
		real map with respect to $\T$, that is, $\wf \circ \T = {\mathcal J}
		\circ \wf$. The involution $\T$ has no fixed points, so the critical
		points of $\wf$ come in pairs $(a, \T(a))$, the ramification profile
		of every critical value $c \in \Real P^1 \subset \Complex P^1$ of
		$\wf$ has every part repeated twice: $(\lambda_1, \lambda_1 \DT,
		\lambda_s, \lambda_s)$, and $\deg \wf = 2n$ is even. In this
		case we say that the ramification profile of the critical value
		$\pi(c) \in \partial\Upp$ of the map $f: N \to \Upp$ is $\lambda =
		(\lambda_1 \DT, \lambda_s)$.
		
		The twisted branched covering $f$ is called simple if all its critical
		values, except possibly $\infty \in \Upp$, have the ramification
		profile $2^1 1^{n-2}$. (Equivalently, each critical value of $\wf$ has
		$2$ simple critical points and $2n-4$ regular points as preimages.)
		Let $u \in \partial\Upp$ be a regular (not critical) value of $f$;
		then the preimage $f^{-1}(u) \subset N$ consists of $n$ points. Fix a
		bijection $\nu: f^{-1}(u) \to \{1 \DT, n\}$ (a labeling); then the
		triple $(f,u,\nu)$ is called a labeled simple twisted branched covering.
		
		Labeled simple twisted branched coverings are split into equivalence
		classes via right-left equivalence: $(f_1,u_1,\nu_1) \sim
		(f_2,u_2,\nu_2)$ if there exist orientation-preserving diffeomorphisms
		$D_1: \wN \to \wN$ and $D_2: \Complex P^1 \to \Complex P^1$ such that
		\begin{itemize}
			\item ($f_1$ transforms to $f_2$) $\wf_1 \circ D_1 = D_2 \circ \wf_2$,
			\item ($D_1$ and $D_2$ are equivariant) $\T \circ D_1 = D_1 \circ \T$
			and $D_2 \circ {\mathcal J} = {\mathcal J} \circ D_2$,
			\item ($D_1, D_2$ preserve labeling) $D_2(\pi^{-1}(u_1)) =
			\pi^{-1}(u_2)$ and $\nu_2 \circ D_1 = \nu_1$.
		\end{itemize}
		
		For an integer $m \ge 0$ and a partition $\lambda$ denote by
		${\mathfrak D}_{m,\lambda}$ the set of equivalence classes of labeled
		simple twisted branched coverings having $m$ simple critical values
		and such that the ramification profile of $\infty$ is $\lambda$.
		
		\begin{theorem}
			$\#\mathfrak D_{m,\lambda} = \#\mathfrak S_{m,\lambda} = \#\mathfrak
			H_{m,\lambda} = n!h^{\sim}_{m,\lambda}$.
		\end{theorem}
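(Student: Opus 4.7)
The equality $\#\mathfrak S_{m,\lambda} = n! h^{\sim}_{m,\lambda}$ is nothing but the definition \eqref{Eq:DefHurw}, and $\#\mathfrak H_{m,\lambda} = \#\mathfrak S_{m,\lambda}$ is Theorem \ref{Th:RibbonToPermut}. So the plan is to establish the remaining equality $\#\mathfrak D_{m,\lambda} = n! h^{\sim}_{m,\lambda}$ by showing that both sides, viewed as coefficients of a generating function in $\beta$ and in the $p_k$'s, satisfy the same PDE with the same initial data.

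Concretely, I introduce
\begin{equation*}
\mathcal{D}(\beta,p) \bydef \sum_{m \ge 0} \sum_\lambda \frac{\#\mathfrak D_{m,\lambda}}{n!\,m!}\, p_{\lambda_1} \dots p_{\lambda_s} \beta^m,
\end{equation*}
the generating function for twisted branched coverings, in direct analogy with $\mathcal{H}^{\sim}(\beta,p)$ of \eqref{Eq:GFHurw}. The main input from \cite{CD} is that the numbers $\#\mathfrak D_{m,\lambda}$ satisfy a cut-and-join-type recursion obtained by merging a simple critical value with the (possibly) ramified critical value $\infty$; reformulated in terms of $\mathcal{D}$, this recursion is precisely the PDE $\pder{\mathcal{D}}{\beta} = \CJ(\mathcal{D})$ with the same operator $\CJ$ given by \eqref{Eq:CJtwisted}. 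This is the only non-formal ingredient, and I would simply cite the corresponding result of Chapuy--Dołęga rather than reproduce its proof.

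Next I verify the initial condition $\mathcal{D}(0,p) = \exp(p_1)$. For $m=0$ a labeled simple twisted branched covering has at most one critical value (namely $\infty$), with ramification profile $\lambda$; by Riemann--Hurwitz applied to $\wf$, the only possibility with $\chi(\wN) = 2n$ is $\lambda = 1^n$, in which case $\wf$ is an unramified cover of $\Complex P^1$, forcing $\wN = \bigsqcup_{i=1}^n \Complex P^1$ with $\wf$ the identity on each component. Counting labelings and equivalences one obtains $\#\mathfrak D_{0,1^n} = n!$, and $\#\mathfrak D_{0,\lambda} = 0$ otherwise; hence $\mathcal{D}(0,p) = \sum_{n \ge 0} \frac{p_1^n}{n!} = \exp(p_1)$, which coincides with $\mathcal{H}^{\sim}(0,p)$ as established in Corollary \ref{Cr:HurwExp}.

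Both generating functions therefore satisfy the same linear first-order evolution equation $\partial_\beta F = \CJ(F)$ with the same initial value at $\beta=0$. Since $\CJ$ preserves the grading $\deg p_k = k$ and acts on each finite-dimensional homogeneous component $\Complex[p]_n$, the solution is unique (both are equal to $\exp(\beta \CJ) \exp(p_1)$ by Corollary \ref{Cr:HurwExp}). Thus $\mathcal{D} = \mathcal{H}^{\sim}$ as formal power series, and matching coefficients gives $\#\mathfrak D_{m,\lambda} = n! h^{\sim}_{m,\lambda}$. The main obstacle is really the cut-and-join equation for $\#\mathfrak D_{m,\lambda}$, which is proved in \cite{CD}; once the coefficients in their equation are matched with those in \eqref{Eq:CJtwisted} (the zonal/Jack polynomial combinatorics with parameter $\alpha=2$), the argument closes by the uniqueness of the PDE's solution.
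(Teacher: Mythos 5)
Your overall strategy is exactly the paper's: define $\mathcal{D}(\beta,p)=\sum_{m,\lambda}\frac{\#\mathfrak D_{m,\lambda}}{n!\,m!}p_\lambda\beta^m$, quote Chapuy--Dołęga (their Theorem~6.5 at $b=1$) for the twisted cut-and-join equation $\pder{\mathcal D}{\beta}=\CJ(\mathcal D)$, check the initial condition at $\beta=0$ by analyzing the unramified case, and conclude $\mathcal D=\mathcal H^{\sim}=\exp(\beta\CJ)\exp(p_1)$ by uniqueness of the solution on each homogeneous component. So the architecture needs no change.

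There is, however, one concrete slip in your verification of the initial condition: you assert $\#\mathfrak D_{0,1^n}=n!$ ``counting labelings,'' but this is wrong and moreover contradicts both the identity you are proving (at $m=0$ one has $n!\,h^{\sim}_{0,1^n}=1$, since $\mathfrak S_{0,1^n}$ consists of the single element $E_n$) and your own next line, where $\mathcal D(0,p)=\sum_n p_1^n/n!$ only comes out if the count is $1$, given the $1/(n!\,m!)$ normalization in your definition of $\mathcal D$. The point is that the labeling $\nu$ is absorbed by the right-left equivalence: for the trivial covering $\wN=\bigsqcup_{i=1}^n\Complex P^1$ with $\wf$ a union of degree-one maps, any relabeling is realized by an equivariant orientation-preserving diffeomorphism $D_1$ permuting the $n$ pairs of sphere components, so all labeled coverings with $m=0$ are equivalent and $\#\mathfrak D_{0,1^n}=1$, $\#\mathfrak D_{0,\lambda}=0$ otherwise --- which is precisely what the paper proves (via the component-wise Euler-characteristic count $\chi(N_0)=n_0+k\le 2$, a cleaner route than your appeal to Riemann--Hurwitz with the unjustified assumption $\chi(\wN)=2n$). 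With that count corrected, your argument closes as in the paper.
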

		
		\begin{proof}
			The generating function $\mathcal{D}(\beta,p) \bydef = \sum_{m\geq
				0} \sum_\lambda \frac{\#\mathfrak D_{m,\lambda}}{n!m!}
			p_{\lambda_1} p_{\lambda_2} \dots p_{\lambda_s} \beta^m$ is shown in
			\cite[Theorem 6.5 for $b=1$]{CD} to satisfy the twisted cut-and-join
			equation $\pder{\mathcal{D}}{\beta} = \CJ(\mathcal D)$ where $\CJ$
			is given by equation \eqref{Eq:CJtwisted}.
			
			Let $m=0$, so the branched covering $\wf \in \mathfrak
			D_{0,\lambda}$ is unramified except possibly over $\infty$. Denote
			by $N_0 \subset \wN$ any connected component of $\wN$, by $n_0$, the
			degree of $\left.\wf\right|_{N_0}$, and $k \bydef
			\#\left.\wf\right|_{N_0}^{-1}(\infty)$. Then the Euler
			characteristic $\chi(\Complex P^1 \setminus \{\infty\}) = 1$ and
			therefore $\chi(N_0 \setminus \wf^{-1}(\infty)) = \chi(N_0) - k =
			n_0$. The set $N_0$ is a smooth compact $2$-manifold, so $2 \ge
			\chi(N_0) = n_0+k$, implying $n_0 = k = 1$. It means that $\wf$ is
			unramified over $\infty$, too, so $\lambda = 1^n$ and $\wf$ is a
			collection of $n$ orientation-preserving diffeomorphisms of
			spheres. Obviously, $\wf$ is unique up to the right-left equivalence
			described above. Thus, $\#\mathfrak D_{0,1^n} = 1$ and $\#\mathfrak
			D_{0,\lambda} = 0$ for other $\lambda$. So, $\mathcal D(0,p) =
			\exp(p_1)$, and Corollary \ref{Cr:HurwExp} implies that $\mathcal
			D(\beta,p) \equiv \mathcal H^{\sim}(\beta,p)$ proving the theorem.
		\end{proof}
		
		\begin{Remark}
			Note that unlike Theorem \ref{Th:RibbonToPermut} we do not know any
			``natural'' one-to-one map between the sets $\mathfrak D_{m,\lambda}$
			and $\mathfrak S_{m,\lambda}$ (or $\mathfrak
			H_{m,\lambda}^{\sim}$). Finding one is a challenging topic of future
			research.
			
			In \cite{CD}, a one-parametric generalization of Hurwitz numbers is
			defined by counting twisted branched coverings with
			parameter-dependent weights. The parameter value $b=0$ gives
			classical Hurwitz numbers, and $b=1$, twisted Hurwitz numbers. A
			natural one-to-one correspondence between $\mathfrak D_{m,\lambda}$
			and $\mathfrak S_{m,\lambda}$ would allow to transfer these weights 
			to ribbon decompositions and to define parametric Hurwitz numbers
			using them.
			
			Note also that in \cite{CD}, more general two-part Hurwitz numbers
			were studied; currently we do not know other models for them.
		\end{Remark}

	\end{document}